\numberwithin{equation}{section}
\theoremstyle{plain}
\newtheorem{proposition}{Proposition}[section]
\newtheorem{theorem}{Theorem}[section]
\newtheorem{lemma}{Lemma}[section]
\newtheorem{corollary}{Corollary}[section]
\theoremstyle{definition}
\newtheorem{definition}{Definition}[section]
\newtheorem{assumption}{Assumption}[section]
\newtheorem{example}{Example}[section]
\theoremstyle{remark}
\newtheorem{rk}{Remark}[section]
\let\expandafter\oldproof\csname\string\proof\endcsname
\let\oldendproof\endproof
\renewenvironment{proof}[1][\proofname]{%
  \oldproof[\noindent\textbf{#1.} ]%
}{\oldendproof}
\newcommand{\1}{\mathds{1}}
\newcommand{\E}{\mathbb{E}}
\newcommand{\be}{\begin{equation}}
\newcommand{\ee}{\end{equation}}
\newcommand{\by}{\begin{eqnarray*}}
\newcommand{\ey}{\end{eqnarray*}}
\renewcommand{\leq}{\leqslant}
\renewcommand{\geq}{\geqslant}
\definecolor{dark-red}{rgb}{0.4,0.15,0.15}
\definecolor{dark-blue}{rgb}{0.15,0.15,0.4}
\definecolor{medium-blue}{rgb}{0,0,0.5}
\begin{document}
%\setstretch{1.3}
\title{Metropolis-Hastings reversiblizations of non-reversible Markov chains}
\author{Michael C.H. Choi}
\address{Institute for Data and Decision Analytics, The Chinese University of Hong Kong, Shenzhen, Guangdong, 518172, P.R. China.}
\email{michaelchoi@cuhk.edu.cn}
%\address{School of Operations Research and Information Engineering, Cornell University, Ithaca, New York}
%\email{cc2373@cornell.edu}
\date{\today}
\maketitle

%\begin{abstract}
%	We study two types of Metropolis-Hastings reversiblizations for non-reversible Markov chains. Consider a Markov kernel $P$ with stationary measure $\pi$ and its time-reversal denoted by $P^*$. Inspired by the classical Metropolis transition kernel $M_1$, we introduce a self-adjoint kernel $M_2$ that captures the opposite transition effect of $M_1$. This permits us to write $P+P^* = M_1 + M_2$, which leads to bounds on the spectral gap of $P$ in terms of $M_1$ and $M_2$. We obtain an expansion of $P$ (and $P^*$) in terms of the spectral measures of $M_1$ and $M_2$. In the spirit of \cite{Fill91} and \cite{Paulin15}, we introduce a new pseudo-spectral gap based on $M_1$ and $M_2$, and show that the total variation distance from stationarity can be bounded by this gap. We give variance bounds of the Markov chain in terms of the gap.
%\end{abstract}

\begin{abstract}
	We study two types of Metropolis-Hastings (MH) reversiblizations for non-reversible Markov chains with Markov kernel $P$. While the first type is the classical Metropolised version of $P$, we introduce a new self-adjoint kernel which captures the opposite transition effect of the first type, that we call the second MH kernel. We investigate the spectral relationship between $P$ and the two MH kernels. Along the way, we state a version of Weyl's inequality for the spectral gap of $P$ (and hence its additive reversiblization), as well as an expansion of $P$. Both results are expressed in terms of the spectrum of the two MH kernels. In the spirit of \cite{Fill91} and \cite{Paulin15}, we define a new pseudo-spectral gap based on the two MH kernels, and show that the total variation distance from stationarity can be bounded by this gap. We give variance bounds of the Markov chain in terms of the proposed gap, and offer spectral bounds in metastability and Cheeger's inequality in terms of the two MH kernels by comparison of Dirichlet form and Peskun ordering.
	\smallskip
	
	\noindent \textbf{AMS 2010 subject classifications}: Primary 60J05, 60J10; Secondary 60J20, 37A25, 37A30
	
	\noindent \textbf{Keywords}: non-reversible Markov chain; spectral gap; Metropolis-Hastings algorithm; mixing time; Weyl's inequality; variance bounds
\end{abstract}

\tableofcontents
\allowdisplaybreaks
%\newpage

\section{Introduction}

Consider a Markov chain with Markov kernel $P$ and stationary distribution $\pi$ with its time-reversal $P^*$ on a general state space $\mathcal{X}$. The quantitative rate of convergence to equilibrium is well-known to be closely connected to the spectrum or the spectral gap of $P$, see for instance \cite{AF14, LSC97, LPW09, MT06, MT09} and the references therein. In the reversible case, that is, when $P$ is viewed as a linear self-adjoint operator in $L^2(\pi)$, \cite{RR97} shows that the existence of an $L^2$-spectral gap is equivalent to $P$ being geometrically ergodic. The main technical insight relies heavily on the spectral theory of self-adjoint operators, which facilitates the analysis of the spectrum of $P$.

However, $P$ need not be reversible in general. If $P$ is non-reversible, the analysis on the rate of convergence is fragmentally understood, possibly due to a much less developed spectral theory for non-self-adjoint operators. We now describe three different approaches that have been elaborated to overcome this difficulty.

The first approach, initiated by \cite{Fill91}, is to resort to an appropriate \textit{reversiblized} version of $P$ and analyze how its spectrum can be related to the chi-squared distance to stationarity of the original chain. Two reversiblizations are proposed, namely the multiplicative reversiblization $PP^*$ and the additive reversiblization $(P+P^*)/2$. In the discrete-time setting, it is shown that the second largest eigenvalue of $PP^*$ can be used to upper bound the distance from stationarity, while the spectral gap of $(P+P^*)/2$ is used for continuous-time Markov chain. More recently, \cite{Paulin15} generalizes this approach and defines a \textit{pseudo}-spectral gap, based upon the maximum spectral gap of $P^{*k}P^k$ for $k \geq 1$. He demonstrates that the proposed gap plays a similar role as that of spectral gap in the reversible case. He proves variance bounds and Bernstein inequality based on his proposed gap. In this vein, we also mention the work of \cite{J13} who gives mixing time bounds for general non-reversible Markov chains in terms of absolute spectral gap.

The second approach, proposed by \cite{KM12}, is to cast $P$ in a weighted Banach space $L^{\infty}_V$ instead of the classical $L^2(\pi)$ framework, where $V$ is a Lyapunov function associated with $P$. In particular, they show that for a $\phi$-irreducible and aperiodic Markov chain, $P$ is geometrically ergodic if and only if $P$ admits a spectral gap in the space $L^{\infty}_V$ equipped with the $V$-norm. They also give an example in which a non-reversible Markov chain is geometrically ergodic yet it fails to have a $L^2(\pi)$ spectral gap.

The third approach, initiated by \cite{Patie-Savov} and \cite{Miclo2016}, is to resort to  intertwining relationship, to build a link between the non-reversible and reversible chains. \cite{Patie-Savov} investigated the rate of convergence to equilibrium of the generalized Laguerre semigroup and the hypocoercivity phenomenon. Recently, by means of the concept of similarity, \cite{Choi-Patie} investigated the rate of convergence to equilibrium of skip-free chains and the cut-off phenomenon.

The path taken in this paper is in the spirit of the first approach and stems on an additional reversiblization procedure. More specifically, we use and develop further  the celebrated Metropolis-Hastings (MH) algorithm   to provide an original in-depth analysis of non-reversible chains. The aim is to investigate Metropolis-Hastings (MH) reversiblizations, and how it helps to analyze non-reversible chains. The MH algorithm, developed by \cite{M53} and \cite{H70}, is a Markov Chain Monte Carlo method that is of fundamental importance in statistics and other applications, see e.g. \cite{RR04} and the references therein. The idea is to construct from a proposal kernel a reversible chain which converges to a desired distribution. Much of the literature focuses on the speed of convergence of \textit{specific} algorithms, where the proposal kernel (e.g.~a random walk proposal or an Ornstein-Uhlenbeck proposal) is often by itself reversible and the target density is in general \textit{not} the proposal stationary measure. For example, \cite{RT96} investigates the random walk MH with exponential family target density. \cite{HSV14} compares the theoretical performance of random walk MH and pCN algorithm with target density given by their equation $1.1,1.2$ by establishing their Wasserstein spectral gap.

The notion of MH reversiblizations to study non-reversible chains is not entirely new. To the best of our knowledge, this term is first formally introduced by \cite{AF14}, although they did not provide a detailed analysis. Our contributions can be summarized as follows:
\begin{enumerate}
	\item We start by studying two types of MH reversiblizations. The first MH kernel is the classical Metropolis chain of $P$, and we identify a new self-adjoint yet possibly non-Markovian operator that we call the second MH kernel. It captures the opposite transition effect of the first kernel, and thus it can be interpreted as the \emph{dual} in a broad sense. We show that the linear operator $P + P^*$ can be written as the sum of the two MH kernels, which allows us to state a version of Weyl's inequality for the spectral gap of $P$ and its additive reversiblization in the finite state space case. We prove that our bound is sharp by investigating in detail the asymmetric simple random walk on the $n$-cycle. We also give  a spectral-type expansion of $P$ expressed in terms of the spectral measures of the two MH kernels, which we call a \textit{MH pseudospectral} expansion, in terms of the spectral measures of the two MH kernels.
	
	\item We proceed by defining a \textit{pseudo}-spectral gap, that we call the MH-spectral gap, based on the spectrum of the two MH kernels, along the line of work by \cite{Paulin15}. We show that the existence of a MH-spectral gap implies that $P$ is geometrically ergodic. We carry out some numerical examples that reveal that our MH-spectral gap is, for non-reversible chains, a better estimate than the existing bounds found in the literature. Variance bounds are also proved in terms of the proposed gap. Finally, we revisit the notion of metastability and the Cheeger's inequality, to offer a variant of these celebrated inequalities by means of comparison of the non-reversible chain and the two MH kernels.
\end{enumerate}

The rest of the paper is organized as follows. We fix the notation and give a review of the theory of general state space Markov chains as well as the MH algorithm in Section \ref{sec:prelim}. We begin Section \ref{sec:MHr} by formally defining the two MH kernels and state some elementary results, followed by comparing $P$ and the two MH kernels using the Peskun ordering, and we end this section by stating Weyl's inequality for the spectral gap of $P$. Section \ref{sec:pseexp} describes the pseudospectral expansion of $P$. The MH-spectral gap is defined in Section \ref{sec:geomergodMHspecgap}, and we give a number of results that relate Weyl's inequality, geometric ergodicity, mixing time and the MH-spectral gap. Finally, we state the results about variance bounds in terms of the MH-spectral gap in Section \ref{sec:varbd}, and discuss metastability and the Cheeger's inequality bounds in Section \ref{sec:metastable}.

\section{Preliminaries}\label{sec:prelim}
In this section, we review several fundamental notions for Markov chains on a general state space.

Let $X = (X_n)_{n \in \mathbb{N}_0}$ be a time-homogeneous Markov chain on a measurable state space $(\mathcal{X},\mathcal{F})$, and as usual we write $P$ to be the Markov kernel which describes the one-step transition. Recall that for $P : \mathcal{X} \times \mathcal{F} \to [0,1]$ to be a Markov kernel, for each fixed $A \in \mathcal{F}$, the mapping $x \mapsto P(x,A)$ is $\mathcal{F}$-measurable and for each fixed $x \in \mathcal{X}$, the function $A \mapsto P(x,A)$ is a probability measure on $\mathcal{X}$. As we need to handle possibly non-Markovian general kernels in the sequel, we say that a general kernel $K$ acts on a given function $f: \mathcal{X} \to \mathbb{C}$ from the left and a signed measure $\mu$ on $(\mathcal{X},\mathcal{F})$ from the right by
$$ Kf(x) := \int_{\mathcal{X}} f(y) K(x,dy), \quad \mu K(A) := \int_{\mathcal{X}} K(x,A) \mu(dx), \quad x \in \mathcal{X}, A \in \mathcal{F},$$
whenever the integrals exist. 

We say that $\pi$ is a stationary distribution of $X$  if $\pi$ is a probability measure on $(\mathcal{X},\mathcal{F})$ and
$$\int_{\mathcal{X}} P(x,A) \, \pi(dx) = \pi(A), \quad A \in \mathcal{F}.$$
A closely related notion is \textit{reversibility}. We say that $X$ is reversible if there is a probability measure $\pi$ on $(\mathcal{X},\mathcal{F})$ such that the \textit{detailed balance} relation is satisfied:
$$\pi(dx)P(x,dy) = \pi(dy) P(y,dx).$$
Note that detailed balance means the two probability measures are identical on the product space $(\mathcal{X} \times \mathcal{X},\mathcal{F} \times \mathcal{F})$. It is known that if $\pi$ is a reversible probability measure, then $\pi$ is a stationary distribution, yet the converse is not true. Let $L^2(\pi)$ be the Hilbert space of complex valued measurable functions on $\mathcal{X}$ that are squared-integrable with respect to $\pi$, endowed with the inner product $\langle f,g \rangle_{\pi} := \int f \overline{g} \, d\pi$ and the norm $||f||_{\pi} := \langle f,f \rangle_{\pi}^{1/2}$, where the overline $\overline{g}$ is the complex conjugate of $g$. $P$ can then be viewed as a linear operator on $L^2(\pi)$, in which we still denote the operator by $P$. The operator norm of $P$ on $L^2(\pi)$ is
$$||P||_{L^2 \to L^2} = \sup_{\substack{f \in L^2(\pi) \\ ||f||_{\pi}=1}} ||Pf||_{\pi}.$$
Let $P^*$ be the adjoint or time-reversal of $P$ on $L^2(\pi)$, and it can be checked that
$$\pi(dx)P^*(x,dy) = \pi(dy) P(y,dx).$$
This shows that reversibility is equivalent to self-adjointness of $P$. Write $\sigma(P) = \sigma(P|L^2)$ to be the spectrum of $P$ on $L^2(\pi)$, i.e.
$$\sigma(P|L^2) = \{\lambda \in \mathbb{C}\backslash 0 : (\lambda I - P) \, \text{does not have a bounded inverse} \}.$$

If $P$ is self-adjoint, then $\sigma(P) \subseteq [-1,1]$. In addition, the spectral theorem for self-adjoint operators gives
\begin{align}\label{eq:specthm}
	P = \int_{\sigma(P)} \lambda \, \mathcal{E}(d \lambda),
\end{align}
where $\mathcal{E}$ is the spectral measure associated with $P$. When considering the spectral gap of $P$, it is often convenient for us to restrict to the space $L^2_0(\pi) = \{f \in L^2(\pi) : \E_{\pi} f = 0\}$. We formally define the meaning of an $L^2$-spectral gap.
\begin{definition}[$L^2$-spectral gap]\label{def:L2speg}
	Suppose that $P$ is a Markov kernel with stationary measure $\pi$. If
	$$\beta = ||P||_{L^2_0 \to L^2_0} < 1,$$
	then the (absolute) $L^2$-spectral gap is $\gamma^* = \gamma^*(P) = 1 - \beta$.
\end{definition}
Let
$$\lambda = \lambda(P) := \inf \{ \alpha : \alpha \in \sigma(P|L^2_0) \}, \quad \Lambda = \Lambda(P) := \sup \{ \alpha : \alpha \in \sigma(P|L^2_0) \}.$$
If $P$ is reversible with respect to $\pi$, then it is known that (see, e.g. \cite{Rudolf12})
\begin{align}\label{eq:luspec}
	\lambda = \inf_{\substack{f \in L^2_0(\pi) \\ ||f||_{\pi}=1}} \langle Pf,f \rangle_{\pi}, \quad \Lambda = \sup_{\substack{f \in L^2_0(\pi) \\ ||f||_{\pi}=1}} \langle Pf,f \rangle_{\pi},
\end{align}
%and when $P$ is self-adjoint, we have 
$$||P||_{L^2_0 \to L^2_0} = \sup_{\substack{f \in L^2_0(\pi) \\ ||f||_{\pi}=1}} |\langle Pf,f \rangle_{\pi}|.$$
This allows us to deduce that
\begin{align} \label{eq:specgap_Prev}
	\beta = \max\{|\lambda|, \Lambda \}.
\end{align}

We also define the (right) spectral gap for a Markov kernel.

\begin{definition}[spectral gap]\label{def:spectralgap}
	Suppose that $P$ is a Markov kernel with stationary measure $\pi$. The (right) spectral gap is defined to be
	$$\gamma = \gamma(P) := 1 - \sup \{ \mathrm{Re}(\alpha) : \alpha \in \sigma(P|L^2_0) \}.$$
\end{definition}

If $P$ is reversible, then $\gamma = 1 - \Lambda(P)$. In the finite state space setting, it can be shown that $\gamma(P) = \gamma((P+P^*)/2)$ (see e.g. \cite[discussion after Definition $2.1.3$, where we can consider $\lambda$ therein to be our $\gamma(P)$]{LSC97}), so for a general $P$ on a finite state space, $$\gamma(P) = 1 - \Lambda((P+P^*)/2).$$
\begin{rk}
	We recall that in \cite{Fill91}, additive reversiblization $(P+P^*)/2$ and multiplicative reversiblization $PP^*$ are proposed to study mixing for non-reversible chains. In the discrete-time setting, the upper bound involves $\gamma(PP^*)$, while for continuous-time Markov chains, the upper bound depends on $\gamma((P+P^*)/2)$.
\end{rk}

\begin{rk}
	In \cite{Paulin15}, a \textit{pseudo}-spectral gap based on the spectral gap of $P^{*k}P^k$ for $k \geq 1$ is introduced. Precisely, we define
	$$\gamma^{ps} = \gamma^{ps}(P) := \max_{k \geq 1} \{\gamma(P^{*k}P^k)/k\}.$$
\end{rk}

\subsection{The Metropolis-Hastings kernel}

Let $\pi$ be a probability measure on $(\mathcal{X},\mathcal{F})$ that is absolutely continuous with density $\pi$ (with a slight abuse of notation, the density is still denoted by $\pi$) with respect to a reference measure $\mu$ on $\mathcal{X}$, that is, $\pi(dx) = \pi(x) \mu(dx)$. Denote $Q$ to be any Markov kernel on $\mathcal{X}$, where $Q(x,\cdot)$ is absolutely continuous with density $q$ with respect to $\mu$. $\pi$ is the so-called \textit{target} distribution, while $Q$ is commonly known as the \textit{proposal} kernel. Define the \textit{acceptance} probabilities $\alpha(x,y)$ by
\[\alpha(x,y) = \begin{cases} \min \left(\dfrac{\pi(y)q(y,x)}{\pi(x)q(x,y)},1 \right) &\mbox{if } \pi(x)q(x,y) > 0, \\
0 & \mbox{otherwise.} \end{cases}\]
Let $p(x,y) := \alpha(x,y) q(x,y)$, and define the \textit{reject} probabilities $r : \mathcal{X} \to [0,1]$ via $r(x) := 1 - \int p(x,y) \mu(dy)$. The Metropolis-Hastings kernel $P$ is given by
$$P(x,dy) = p(x,y) \mu(dy) + r(x)\delta_x(dy),$$
where $\delta_x$ is the point mass at $x$.

The MH kernel allows for the following algorithmic interpretation. First, we choose $X_0$ and given the current state $X_n$, we generate the \textit{proposal} $Y_{n+1}$ by $Q(X_n,\cdot)$. With probability $\alpha(X_n,Y_{n+1})$, we accept the proposal and set $X_{n+1} = Y_{n+1}$. Otherwise, we reject $Y_{n+1}$ and set $X_{n+1} = X_n$. Finally, we set $n = n+1$ and the above procedure is repeated.

Markov Chain Monte Carlo (MCMC) methods, such as the classical Metropolis-Hastings algorithm, involve constructing a Markov chain which converges to a desired stationary distribution $\pi$ that one would like to sample from. It differs from Monte Carlo methods in the sense that $\pi$ is often difficult to simulate directly, and is particularly useful in situations where we only know $\pi$ up to normalization constants. As described in \cite{RR04}, we can see that the choice of the proposal kernel $Q$ has an significant impact on the performance of the MH algorithm. Common choices of $Q$ includes the symmetric MH ($q(x,y) = q(y,x)$), random walk MH ($q(x,y) = q(y-x)$) and independence MH ($q(x,y) = q(y)$).
		
\section{Metropolis-Hastings reversiblizations}\label{sec:MHr}

From now on, unless otherwise stated, we assume $X$ is a $\phi$-irreducible Markov chain, which may not be reversible, with Markov kernel $P$ and stationary distribution $\pi$. We also assume that $P(x,\cdot)$, $P^*(x,\cdot)$ and $\pi$ share a common dominating reference measure $\mu$ on $(\mathcal{X},\mathcal{F})$, with density denoted by $p(x,\cdot)$, $p^*(x,\cdot)$ and $\pi(\cdot)$, respectively. Furthermore, we assume that the set $\{x : \pi(x) = 0\}$ is a $\mu$-null set.

Given a Markov chain $X$ with Markov kernel $P$ and stationary distribution $\pi$, we can obtain a \textit{MH}-\textit{reversiblized} chain by taking the proposal kernel to be $P$. The resulting process is what we called \textit{the first MH chain}.

\begin{definition}[The first MH kernel]\label{def:M1}
	The first MH chain, with Markov kernel denoted by $M_1 := M_1(P)$, is the MH kernel with proposal kernel $P$ and target distribution $\pi$. That is, let
	\begin{align*}
	\alpha_1(x,y) &= \begin{cases} \min \left(\dfrac{\pi(y)p(y,x)}{\pi(x)p(x,y)},1 \right) &\mbox{if } \pi(x)p(x,y) > 0, \\
	0 & \mbox{otherwise,} \end{cases} \\
	m_1(x,y) &= \alpha_1(x,y) p(x,y) = \min\{p^*(x,y),p(x,y)\}, \\
	r_1(x) &= \int_{y \neq x} (1-\alpha_1(x,y)) p(x,y) \, \mu(dy) ,
	\end{align*}
	then $M_1$ is given by
	$$M_1(x,dy) = m_1(x,y) \mu(dy) + r_1(x)\delta_x(dy).$$
\end{definition}

By taking a closer look at $m_1$, we can see that the first MH chain \textit{weakens} the transition to $A_x := \{y \in \mathcal{X} : \alpha_1(x,y) < 1\}$, and \textit{follows} the same transition as the original chain $X$ for $A_x^c = \{y : \alpha_1(x,y) = 1\}$. This motivates us to develop what we call \textit{the second MH kernel} $M_2 := M_2(P)$ with density $m_2$, which captures the \textit{opposite} transition of $M_1$. Precisely, we would like to have
$$m_2(x,y) = \begin{cases} p(x,y) &\mbox{if } y \in A_x, \\
p^*(x,y) &\mbox{if } y \in A_x^c \backslash \{x\}. \end{cases} = \max\{p^*(x,y),p(x,y)\}.$$
As a result, we obtain the following:
\begin{definition}[The second MH kernel]\label{def:M2}
	The second MH kernel $M_2 := M_2(P)$ and density $m_2$ are given by
	\begin{align*}
	m_2(x,y) &= \max\{p^*(x,y),p(x,y)\}, \\
	M_2(x,dy) &= m_2(x,y) \mu(dy) - r_1(x)\delta_x(dy).
	\end{align*}
\end{definition}

Note that $M_2$ in general may not be a Markov kernel, since there is no guarantee that $M_2(x,\{x\}) = P(x,\{x\}) - r_1(x) \geq 0$. For instance, if $P$ is the Markov kernel of a finite Markov chain with $P(x,x) = 0$ for all $x \in \mathcal{X}$, then $M_2(x,x) = -r_1(x) \leq 0$. In the following we collect a few elementary properties of $M_1$ and $M_2$.

\begin{lemma}\label{lem:M1M2prop}
	Suppose that $P$ is a Markov kernel with stationary measure $\pi$, with $M_1$ and $M_2$ being the first and second MH kernel of $P$ respectively. Then the following holds.
	\begin{enumerate}[label={\upshape(\roman*)}, align=left, widest=iii, leftmargin=*]
		\item $P + P^* = M_1 + M_2$. In particular, $M_2(x, \mathcal{X}) = 1$, for $x \in \mathcal{X}$. \label{state:1}
		\item $M_1$ and $M_2$ are self-adjoint operators on $L^2(\pi)$. \label{state:2}
		\item $M_1 = M_2 = P$ if and only if $P$ is reversible with respect to $\pi$. \label{state:3}
		\item $M_i(P) = M_i(P^*)$ for $i = 1,2$. \label{state:4}
	\end{enumerate}
\end{lemma}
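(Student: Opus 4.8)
The plan is to read everything off the closed-form expressions $m_1(x,y) = \min\{p(x,y),p^*(x,y)\}$ and $m_2(x,y) = \max\{p(x,y),p^*(x,y)\}$ together with the density form of the time-reversal identity $\pi(dx)P^*(x,dy) = \pi(dy)P(y,dx)$, i.e.
\[
\pi(x)p^*(x,y) = \pi(y)p(y,x), \qquad \pi(x)p(x,y) = \pi(y)p^*(y,x) \qquad (\mu\otimes\mu\text{-a.e.}),
\]
which is precisely the computation that turns $\alpha_1$ into $\min\{p^*/p,1\}$ and hence $m_1$ into $\min\{p,p^*\}$. I will also use the elementary identity $\min\{a,b\}+\max\{a,b\}=a+b$ and the fact that a Markov kernel with invariant law $\pi$ acts as a contraction, hence as a bounded operator, on $L^2(\pi)$.

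For \ref{state:1}, adding the two kernels splits into the absolutely continuous part, $m_1(x,y)+m_2(x,y) = p(x,y)+p^*(x,y)$, and the atomic part, $r_1(x)\delta_x - r_1(x)\delta_x = 0$, so $M_1(x,dy)+M_2(x,dy) = (p(x,y)+p^*(x,y))\mu(dy) = P(x,dy)+P^*(x,dy)$. Since $P^*\1 = \1$ (because $\langle P^*\1,f\rangle_\pi = \langle \1,Pf\rangle_\pi = \langle \1,f\rangle_\pi$ for all $f \in L^2(\pi)$, by stationarity of $\pi$) and $M_1$ is a genuine Metropolis--Hastings kernel, hence Markov, I obtain $M_2(x,\mathcal{X}) = (P+P^*)(x,\mathcal{X}) - M_1(x,\mathcal{X}) = 2-1 = 1$. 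A side check that $M_1$ is indeed normalized, $\int m_1(x,\cdot)\,d\mu + r_1(x) = \int p(x,\cdot)\,d\mu = 1$, uses only that the integrand $(1-\alpha_1(x,y))p(x,y)$ vanishes at $y=x$ (as $\alpha_1(x,x)=1$ whenever it matters), so the restriction $y\neq x$ in the definition of $r_1$ costs nothing.

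For \ref{state:2}, I first verify that $M_1$ is reversible with respect to $\pi$: on the absolutely continuous part the time-reversal identities give $\pi(x)m_1(x,y) = \min\{\pi(x)p(x,y),\pi(x)p^*(x,y)\} = \min\{\pi(y)p^*(y,x),\pi(y)p(y,x)\} = \pi(y)m_1(y,x)$, while the atomic part $\pi(dx)r_1(x)\delta_x(dy)$ is supported on the diagonal and manifestly symmetric; hence $M_1 = M_1^*$ on $L^2(\pi)$. Now $P$, $P^*$ and $M_1$ are all bounded on $L^2(\pi)$, so $M_2 := P+P^*-M_1$ (using \ref{state:1}) is a well-defined bounded operator, and being a real linear combination of the self-adjoint operators $P+P^*$ and $M_1$ it is self-adjoint. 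This is the one spot that needs a little care: $M_2$ is in general not a Markov kernel, so its self-adjointness should be argued through the operator identity $M_2 = P+P^*-M_1$ rather than by treating $M_2$ as a transition kernel in its own right.

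Finally, \ref{state:3} and \ref{state:4} follow quickly. If $P$ is reversible then $p=p^*$ $\mu\otimes\mu$-a.e., so $\alpha_1\equiv 1$, $r_1\equiv 0$, and $m_1 = m_2 = p$, giving $M_1 = M_2 = P$; conversely, if $M_1 = M_2 = P$ then \ref{state:1} yields $2P = M_1+M_2 = P+P^*$, i.e.\ $P = P^*$. For \ref{state:4}, passing from $P$ to $P^*$ merely swaps $p\leftrightarrow p^*$ (since $(P^*)^* = P$); as $m_1 = \min\{p,p^*\}$ and $m_2 = \max\{p,p^*\}$ are symmetric under this swap, and $r_1(x) = 1 - \int\min\{p(x,y),p^*(x,y)\}\,\mu(dy)$ is too, both $M_1$ and $M_2$ are unchanged, i.e.\ $M_i(P) = M_i(P^*)$ for $i=1,2$. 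Overall the lemma is bookkeeping; the only genuinely delicate points are keeping the absolutely continuous and atom-at-$x$ parts separated throughout and handling $M_2$ as the bounded operator $P+P^*-M_1$.
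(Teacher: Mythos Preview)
Your proof is correct and follows essentially the same route as the paper's: both use $\min+\max=a+b$ for \ref{state:1}, derive self-adjointness of $M_2$ from the operator identity $M_2=P+P^*-M_1$ for \ref{state:2}, and argue \ref{state:3}--\ref{state:4} from the symmetry of $\min$ and $\max$ under $p\leftrightarrow p^*$. The only cosmetic difference is that the paper cites a standard reference for the reversibility of $M_1$ whereas you verify it directly, and for \ref{state:4} the paper deduces $M_2(P)=M_2(P^*)$ from \ref{state:1} after establishing it for $M_1$, while you argue both cases directly from the invariance of $m_i$ and $r_1$ under the swap.
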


\begin{proof}
	\begin{enumerate}[label={\upshape(\roman*)}, align=left, widest=iii, leftmargin=*]
		\item This can easily be seen from Definition \ref{def:M1} and \ref{def:M2} together with $$p(x,y) + p^*(x,y) = \min\{p^*(x,y),p(x,y)\} + \max\{p^*(x,y),p(x,y)\}.$$
		
		\item It is well known that $M_1$ is reversible (and hence invariant) w.r.t.~$\pi$, see e.g.~\cite[Proposition $2$]{RR04}. To see that $M_2$ is a self-adjoint operator on $L^2(\pi)$, we use \ref{state:1}. That is,
		$$M_2^* = (P + P^* - M_1)^* = P^* + P - M_1 = M_2.$$
		
		\item If $M_1 = M_2 = P$, then \ref{state:1} gives $P^* = M_1 + M_2 - P = P$. Conversely, when $P$ is reversible w.r.t.~$\pi$, then $\alpha(x,y) = 1$ $\mu \times \mu \, a.e.$, and hence $M_1 = P$ by Definition \ref{def:M1}. It follows again from \ref{state:1} that $M_2 = P + P^* - M_1 = P$.
		
		\item Using the fact that $P^{**} = P$ and Definition \ref{def:M1}, $M_1(P) = M_1(P^*).$ Next, \ref{state:1} gives
		$$M_2(P) = P + P^* - M_1(P) = P^* + P - M_1(P^*) = M_2(P^*).$$
	\end{enumerate}
\end{proof}

\begin{rk}
	As remarked earlier, although $M_2$ is not a Markov kernel in general, it is a self-adjoint operator in $L^2(\pi)$ and satisfies $\pi M_2 = \pi (P + P^* - M_1) = \pi$.
\end{rk}

%\begin{rk}
%	If one would like to define $M_2$ as a Markov kernel, we can divide $m_2$ by $2$ and put the remaining probability mass back to $x$ to obtain:
%	\begin{align*}
%	m_2(x,y) &= \dfrac{1}{2}\max\{p^*(x,y),p(x,y)\}, \\
%	r_2(x) &= 1 - \int m_2(x,y) \,\mu(dy), \\
%	M_2(x,dy) &= m_2(x,y) \mu(dy) + r_2(x)\delta_x(dy).
%	\end{align*}
%	In this definition, we provide an algorithmic interpretation for $M_2$. We first fix $X_0$. Given $X_n$, we flip an unbiased coin to decide which kernel ($P$ or $P^*$) to follow. If the resulting kernel is $P$ (resp. $P^*$), we generate the \textit{proposal} $Y_{n+1}$ by $P(X_n,\cdot)$ (resp. $Y_{n+1}^*$ by $P^*(X_n,\cdot)$). If $Y_{n+1} \in A_{X_n}$ (resp. $Y_{n+1}^* \in A_{X_n}^c$), then the proposal is accepted and we set $X_{n+1} = Y_{n+1}$ (resp. $X_{n+1} = Y_{n+1}^*$). Otherwise, the proposal is rejected and $X_{n+1} = X_n$. Finally, we set $n = n+1$ and the above procedure is repeated.
%\end{rk}

\subsection{Peskun Ordering}
We aim to investigate some further relationships and properties of the spectra of $P, M_1$ and $M_2$ via the so-called Peskun ordering, which was first introduced by \cite{Pesk73} as a partial ordering for Markov kernels on finite state space, and was further generalized by \cite{Tie98} to general state space.

\begin{definition}[Peskun Ordering]\label{def:peksun}
	Suppose that $P_1, P_2$ are general kernels with invariant distribution $\pi$. $P_1$ dominates $P_2$ off the diagonal, written as $P_1 \succeq P_2$, if for $\pi$-almost all $x$, $P_1(x,A) \geq P_2(x,A)$ for all $A \in \mathcal{F}$ with $x \notin A$.
\end{definition}

Note that we are not restricting to Markov kernels in Definition \ref{def:peksun}, since $M_2$ in general may not be a Markov kernel. Even in this setting, we can still demonstrate that the results obtained by \cite{Tie98} hold in the following lemma:

\begin{lemma}\label{lem:peskunpositive}
	Suppose that $P$ is a Markov kernel with stationary measure $\pi$, with $M_1$ and $M_2$ being the first and second MH kernel of $P$ respectively. We have the following:
	\begin{enumerate}[label={\upshape(\roman*)}, align=left, widest=iii, leftmargin=*]
		\item $M_2 \succeq P \succeq M_1$.
		\item $P - M_2$, $M_1 - P$ and $M_1 - M_2$ are positive semidefinite operators.
	\end{enumerate}
\end{lemma}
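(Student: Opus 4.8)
The plan is to read off (i) directly from the explicit transition densities $m_1=\min\{p,p^*\}$ and $m_2=\max\{p,p^*\}$, and then to reduce all three assertions in (ii) to the single statement that the self-adjoint operator $M_1-M_2$ is positive semidefinite, which I would prove by a Dirichlet-form computation. For (i), fix $x\in\mathcal{X}$ and $A\in\mathcal{F}$ with $x\notin A$. Since the Dirac terms $r_1(x)\delta_x$ and $-r_1(x)\delta_x$ put no mass on $A$, Definitions \ref{def:M1} and \ref{def:M2} give $M_1(x,A)=\int_A m_1(x,y)\,\mu(dy)$, $P(x,A)=\int_A p(x,y)\,\mu(dy)$ and $M_2(x,A)=\int_A m_2(x,y)\,\mu(dy)$, and the pointwise inequalities $\min\{p(x,y),p^*(x,y)\}\le p(x,y)\le\max\{p(x,y),p^*(x,y)\}$ then yield $M_1(x,A)\le P(x,A)\le M_2(x,A)$, i.e. $M_2\succeq P\succeq M_1$.

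For (ii), I would first use Lemma \ref{lem:M1M2prop}\ref{state:1}, i.e. $M_2=P+P^*-M_1$, to rewrite $P-M_2=M_1-P^*$ and to observe that $(M_1-P)^*=M_1-P^*$. Hence the self-adjoint parts of $P-M_2$ and of $M_1-P$ both equal $M_1-\tfrac12(P+P^*)=\tfrac12(M_1-M_2)$, again by Lemma \ref{lem:M1M2prop}\ref{state:1}. Since $\mathrm{Re}\,\langle Af,f\rangle_\pi=\langle\tfrac12(A+A^*)f,f\rangle_\pi$ for all $f\in L^2(\pi)$ (and $\langle Af,f\rangle_\pi=\langle\tfrac12(A+A^*)f,f\rangle_\pi$ for real-valued $f$), this gives
$$\mathrm{Re}\,\langle(P-M_2)f,f\rangle_\pi \;=\; \mathrm{Re}\,\langle(M_1-P)f,f\rangle_\pi \;=\; \tfrac12\,\langle(M_1-M_2)f,f\rangle_\pi,$$
while $M_1-M_2$ is already self-adjoint by Lemma \ref{lem:M1M2prop}\ref{state:2}. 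So all three operators are positive semidefinite once $M_1-M_2\succeq 0$ is established.

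It then remains to show $M_1-M_2\succeq 0$, equivalently $\langle(M_2-M_1)f,f\rangle_\pi\le 0$ for all $f\in L^2(\pi)$. The operator $L:=M_2-M_1$ has off-diagonal density $m_2(x,y)-m_1(x,y)=|p(x,y)-p^*(x,y)|\ge 0$ and diagonal contribution $-2r_1(x)\delta_x(dy)$, and by Lemma \ref{lem:M1M2prop}\ref{state:1} it has vanishing row masses, $L(x,\mathcal{X})=0$, so that $2r_1(x)=\int_{y\neq x}|p(x,y)-p^*(x,y)|\,\mu(dy)$ and $Lf(x)=\int_{y\neq x}|p(x,y)-p^*(x,y)|\,(f(y)-f(x))\,\mu(dy)$. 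Moreover the measure $\nu(dx,dy):=\pi(dx)\,|p(x,y)-p^*(x,y)|\,\mu(dy)$ on $\mathcal{X}\times\mathcal{X}$ is symmetric: this follows from the adjoint relations $\pi(x)p(x,y)=\pi(y)p^*(y,x)$ and $\pi(x)p^*(x,y)=\pi(y)p(y,x)$ by subtracting and taking absolute values. Symmetrizing the resulting bilinear form (the diagonal is harmless since the integrand vanishes there) would then give, for real $f$,
$$\langle Lf,f\rangle_\pi \;=\; \iint_{\mathcal{X}^2} f(x)\bigl(f(y)-f(x)\bigr)\,\nu(dx,dy) \;=\; -\tfrac12\iint_{\mathcal{X}^2}\bigl(f(x)-f(y)\bigr)^2\,\nu(dx,dy)\;\le\;0,$$
with the complex case following by splitting $f$ into real and imaginary parts. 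Together with the previous paragraph this completes (ii).

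None of the steps is deep; the point needing the most care is the Dirichlet-form identity for $L=M_2-M_1$ — in particular verifying the symmetry of $\nu$ and observing that the ``wrong'' sign of the diagonal term (the very reason $M_2$ fails to be a genuine Markov kernel) is immaterial for the quadratic form, since it is exactly balanced by the zero-row-sum property. This is the general-state-space, possibly non-Markovian analogue of the Peskun--Tierney ordering, so I expect the argument to parallel \cite{Tie98} rather than to require any new idea; a Fubini/integrability check to legitimize the symmetrization is the only genuinely technical nuisance.
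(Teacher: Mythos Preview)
Your argument for (i) is identical to the paper's. For (ii) you take a genuinely different route. The paper treats the one ``new'' case $P-M_2$ (where Tierney's result does not apply directly because $M_2$ need not be Markov) by checking that $H(dx,dy):=\pi(dx)(\delta_x-P+M_2)(x,dy)$ is nonetheless a probability measure on $\mathcal{X}\times\mathcal{X}$ with both marginals equal to $\pi$, after which Tierney's AM--GM/Cauchy--Schwarz step goes through verbatim; the remaining two cases follow in the same way. You instead observe that the self-adjoint parts of $M_1-P$ and $P-M_2$ both equal $\tfrac12(M_1-M_2)$, reduce everything to the single self-adjoint operator $M_1-M_2$, and then show $\langle(M_2-M_1)f,f\rangle_\pi\le 0$ by recognising $M_2-M_1$ as a generator-type kernel (zero row sums, nonnegative $\pi$-symmetric off-diagonal density $|p-p^*|$) and writing out its Dirichlet form.

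Both arguments are correct and about the same length. Yours is fully self-contained and makes the Dirichlet-form structure explicit; the paper's is more modular but leans on the reader knowing Tierney's proof. The two are closely related under the hood: your symmetric measure $\nu(dx,dy)=\pi(dx)|p(x,y)-p^*(x,y)|\mu(dy)$ is precisely the off-diagonal part of the paper's $H$ in the $M_1$--$M_2$ comparison, and your symmetrisation identity $\langle Lf,f\rangle_\pi=-\tfrac12\iint(f(x)-f(y))^2\,\nu(dx,dy)$ is the polarised form of the AM--GM step in Tierney's argument. One small remark: your reduction shows $\mathrm{Re}\,\langle(P-M_2)f,f\rangle_\pi\ge 0$ and $\mathrm{Re}\,\langle(M_1-P)f,f\rangle_\pi\ge 0$, which is exactly what is used downstream (Corollary~\ref{cor:specord}), so this matches the paper's intended meaning of ``positive semidefinite'' for the non-self-adjoint pieces.
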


\begin{proof}
	\begin{enumerate}[label={\upshape(\roman*)}, align=left, widest=iii, leftmargin=*]
			\item For $x \in \mathcal{X}$ and $A \in \mathcal{F}$ with $x \notin A$,
			\begin{align*}
				M_2(x,A) = \int_{A} \max\{p(x,y),p^*(x,y)\} \mu(dy)
						 \geq P(x,A)
						 \geq \int_{A} \min\{p(x,y),p^*(x,y)\} \mu(dy)
						 = M_1(x,A).
			\end{align*}
			
			\item We modify the proof of Lemma $3$ in \cite{Tie98} to cater for the case where $M_2$ may not be a Markov kernel. Let $H(dx,dy) = \pi(dx) (\delta_x(dy) - P(x,dy) + M_2(x,dy))$. Lemma \ref{lem:M1M2prop} yields $H(A) \geq 0$ , $H(\mathcal{X} \times \mathcal{X}) = 1$, $H(\mathcal{X} \times B) = H(B \times \mathcal{X}) = \pi(B)$ for $A \in \mathcal{F} \otimes \mathcal{F}$ and $B \in \mathcal{F}$. The rest of the proof are the same as the proof of Lemma $3$ in \cite{Tie98}.
	\end{enumerate}
\end{proof}

\begin{corollary}\label{cor:specord}
	Suppose that $P$ is a Markov kernel with stationary measure $\pi$, with $M_1$ and $M_2$ being the first and second MH kernel of $P$ respectively. Using the notation defined in \eqref{eq:luspec}, we obtain:
	\begin{align}
		&\lambda(M_2) \leq \inf_{\substack{f \in L^2_0(\pi) \\ ||f||_{\pi}=1}} \langle Pf,f \rangle_{\pi} \leq \lambda(M_1), \\
		&\Lambda(M_2) \leq \sup_{\substack{f \in L^2_0(\pi) \\ ||f||_{\pi}=1}} \langle Pf,f \rangle_{\pi} \leq \Lambda(M_1).
	\end{align}
\end{corollary}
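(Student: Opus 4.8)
The plan is to read off the two displayed chains of inequalities directly from the self-adjointness of $M_1,M_2$ established in Lemma \ref{lem:M1M2prop}\ref{state:2}, combined with the positive semidefiniteness statements of Lemma \ref{lem:peskunpositive}(ii); no genuinely new estimate is required.

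First I would record that $M_1$ and $M_2$ are bounded self-adjoint operators on $L^2(\pi)$ that fix the constants (by Lemma \ref{lem:M1M2prop}\ref{state:1} we have $M_i(x,\mathcal{X})=1$, hence $M_i\1=\1$) and satisfy $\pi M_i=\pi$; consequently each $M_i$ restricts to a bounded self-adjoint operator on $L^2_0(\pi)$. The variational characterizations of the extremal points of the spectrum underlying \eqref{eq:luspec} only use self-adjointness, not Markovianity, so they apply verbatim to $M_1$ and to $M_2$ (the fact that $M_2$ need not be a Markov kernel is harmless here), giving
\begin{align*}
	\lambda(M_i)=\inf_{\substack{f\in L^2_0(\pi)\\ ||f||_{\pi}=1}}\langle M_i f,f\rangle_{\pi},\qquad \Lambda(M_i)=\sup_{\substack{f\in L^2_0(\pi)\\ ||f||_{\pi}=1}}\langle M_i f,f\rangle_{\pi},\qquad i=1,2.
\end{align*}
Then, since Lemma \ref{lem:peskunpositive}(ii) asserts that $M_1-P$ and $P-M_2$ are positive semidefinite, for every $f\in L^2_0(\pi)$ with $||f||_{\pi}=1$ we get the sandwich $\langle M_2 f,f\rangle_{\pi}\le\langle Pf,f\rangle_{\pi}\le\langle M_1 f,f\rangle_{\pi}$. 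Taking the infimum over such $f$ and invoking the identities above for $\lambda(M_2)$ and $\lambda(M_1)$ yields the first chain of inequalities; taking the supremum and invoking the identities for $\Lambda(M_2)$ and $\Lambda(M_1)$ yields the second.

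The only point demanding a word of care — and the closest thing to an obstacle — is that $P$ is not self-adjoint, so $\langle Pf,f\rangle_{\pi}$ is a priori complex; the sandwich should therefore be read for real-valued $f$, or, for complex $f$, with $\langle Pf,f\rangle_{\pi}$ replaced by its real part $\langle\tfrac{P+P^*}{2}f,f\rangle_{\pi}=\langle\tfrac{M_1+M_2}{2}f,f\rangle_{\pi}$, using $P+P^*=M_1+M_2$ from Lemma \ref{lem:M1M2prop}\ref{state:1}. In this form the sandwich is precisely the positive semidefiniteness of $M_1-M_2$ from Lemma \ref{lem:peskunpositive}(ii), decomposed as $M_1-\tfrac{M_1+M_2}{2}\succeq 0$ and $\tfrac{M_1+M_2}{2}-M_2\succeq 0$. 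Since the quantities $\inf_f\langle Pf,f\rangle_{\pi}$ and $\sup_f\langle Pf,f\rangle_{\pi}$ occurring in the statement take the same value under either reading, the corollary follows with no further work.
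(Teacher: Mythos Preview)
Your proof is correct and follows essentially the same route as the paper: invoke the sandwich $\langle M_2 f,f\rangle_{\pi}\le\langle Pf,f\rangle_{\pi}\le\langle M_1 f,f\rangle_{\pi}$ from Lemma~\ref{lem:peskunpositive}, then take infimum and supremum over the unit sphere of $L^2_0(\pi)$ and apply the variational characterizations \eqref{eq:luspec} together with Lemma~\ref{lem:M1M2prop}. Your additional remarks justifying why \eqref{eq:luspec} applies to the possibly non-Markovian $M_2$ and how to read the sandwich for complex-valued $f$ are careful elaborations that the paper leaves implicit.
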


\begin{proof}
	Lemma \ref{lem:peskunpositive} leads to
	$$\langle M_2 f,f \rangle_{\pi} \leq \langle Pf,f \rangle_{\pi} \leq \langle M_1 f,f \rangle_{\pi}.$$
	Desired result follows by taking infimum or supremum over $\{f \in L^2_0(\pi) : ||f||_{\pi}=1\}$, \eqref{eq:luspec} and Lemma \ref{lem:M1M2prop}(i).
\end{proof}

Inspired by Corollary \ref{cor:specord} and \eqref{eq:specgap_Prev}, we will introduce a \textit{pseudo}-spectral gap (that we will call the MH-spectral gap) based on $\lambda(M_2)$ and $\Lambda(M_1)$ in Section \ref{sec:geomergodMHspecgap}. We will obtain a number of new bounds based on this gap.

\subsection{Weyl's inequality for additive reversiblization}\label{sec:weyl}

In this section, we introduce Weyl's inequality for the additive reversiblization for finite Markov chains, which allows us to give upper and lower bound on the eigenvalues of $(P+P^*)/2$, in terms of the eigenvalues of $M_1(P)$ and $M_2(P)$. Assume that $P$ is a stochastic matrix on a finite state space $\mathcal{X}$ with stationary distribution $\pi$, with eigenvalues-eigenvectors denoted by $(\lambda_j(P),\phi_j(P))_{j=1}^{|\mathcal{X}|}$. If $P$ is a self-adjoint matrix, we arrange its eigenvalues in non-increasing order by $\lambda_1(P) \geq \ldots \geq \lambda_n(P)$, where $n := |\mathcal{X}|$. We write $l^2(\pi)$ to be the Hilbert space of square-summable function with respect to $\pi$. 

\begin{theorem}[Weyl's inequality for additive reversiblization]\label{thm:weyl}
	Assume that $P$ is a $n \times n$ stochastic matrix with stationary distribution $\pi$, with $M_1$, $M_2$ to be the first and second MH kernel.
	\begin{enumerate}[label={\upshape(\roman*)}, align=left, widest=iii, leftmargin=*]
		\item For integers $i,j,k$ such that $1 \leq i,j,k \leq n$ and $i+1 = j+k$,
		$$\lambda_i(P+P^*) \leq \lambda_j(M_1) + \lambda_k(M_2).$$
		Equality holds if and only if there exists a vector $f$ with $||f||_{l^2(\pi)} = 1$ such that $M_1f = \lambda_j f, M_2f = \lambda_k f$ and $(P+P^*)f = \lambda_i f$.
		
		\item For integers $i,l,m$ such that $1 \leq i,l,m \leq n$ and $i+n = l+m$,
		$$\lambda_i(P+P^*) \geq \lambda_l(M_1) + \lambda_m(M_2).$$
		Equality holds if and only if there exists a vector $f$ with $||f||_{l^2(\pi)} = 1$ such that $M_1f = \lambda_l f, M_2f = \lambda_m f$ and $(P+P^*)f = \lambda_i f$.
	\end{enumerate}
\end{theorem}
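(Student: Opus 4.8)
The plan is to recognize this as a direct application of the classical Weyl inequality for sums of Hermitian matrices, once we pass to the self-adjoint setting on $l^2(\pi)$. By Lemma \ref{lem:M1M2prop}\ref{state:1}, we have the operator identity $P + P^* = M_1 + M_2$, and by Lemma \ref{lem:M1M2prop}\ref{state:2}, both $M_1$ and $M_2$ are self-adjoint on $l^2(\pi)$; moreover $P + P^*$ is self-adjoint on $l^2(\pi)$ as well (since $(P+P^*)^* = P^* + P$). So all three operators in the statement are Hermitian when $l^2(\pi)$ is equipped with the inner product $\langle f, g\rangle_\pi$. The first step is therefore to make this reduction precise: diagonalize $\pi = \mathrm{diag}(\pi(x))_{x \in \mathcal{X}}$, write $D = \pi^{1/2}$, and conjugate each operator $K \mapsto \widetilde{K} := D K D^{-1}$. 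This conjugation is an isometry from $l^2(\pi)$ onto the standard $\ell^2$, so $\widetilde{M_1}, \widetilde{M_2}, \widetilde{P + P^*}$ are genuine Hermitian matrices in the usual sense, with the same eigenvalues as $M_1, M_2, P+P^*$ respectively, and $\widetilde{P + P^*} = \widetilde{M_1} + \widetilde{M_2}$.

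Second, I would invoke the standard Weyl inequalities (see e.g.\ Horn--Johnson, \emph{Matrix Analysis}): if $A, B$ are $n \times n$ Hermitian matrices with eigenvalues arranged in non-increasing order, then for $i + 1 = j + k$ one has $\lambda_i(A+B) \le \lambda_j(A) + \lambda_k(B)$, and for $i + n = l + m$ one has $\lambda_i(A+B) \ge \lambda_l(A) + \lambda_m(B)$. Applying these with $A = \widetilde{M_1}$, $B = \widetilde{M_2}$ gives exactly statements (i) and (ii). The eigenvalue-ordering convention in the theorem matches what is needed here because $M_1$ and $M_2$ are self-adjoint, so the paper's convention "arrange eigenvalues of a self-adjoint matrix in non-increasing order" applies to all of $M_1$, $M_2$, and $P+P^*$.

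Third, for the equality characterizations, I would again borrow the known equality conditions for Weyl's inequalities. The classical fact is that equality $\lambda_i(A+B) = \lambda_j(A) + \lambda_k(B)$ (in the $i+1 = j+k$ case) holds if and only if there is a common unit eigenvector $f$ with $Af = \lambda_j(A) f$, $Bf = \lambda_k(B) f$, and $(A+B) f = \lambda_i(A+B) f$; the same statement holds for the lower inequality with the appropriate indices. Transporting this back through the isometry $D^{-1}(\cdot)$ converts a common eigenvector in $\ell^2$ into a common eigenvector $D^{-1} f$ of $M_1, M_2, P + P^*$ in $l^2(\pi)$, normalized in $\|\cdot\|_{l^2(\pi)}$. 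The main (and really only) obstacle is a bookkeeping one: making sure the similarity transform $D K D^{-1}$ does indeed send the $L^2(\pi)$-adjoint to the conjugate-transpose and that eigenvalues and normalized eigenvectors correspond correctly under it, and that the non-increasing ordering used in the hypothesis is consistent across all three matrices. Once that is in place, the theorem is an immediate consequence of the matrix Weyl inequality and its equality case, so I would keep the proof short and cite Weyl's inequality rather than re-deriving it.
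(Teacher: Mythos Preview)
Your proposal is correct and follows essentially the same approach as the paper: invoke Lemma~\ref{lem:M1M2prop}\ref{state:1} to write $P+P^*=M_1+M_2$ with $M_1,M_2$ self-adjoint in $l^2(\pi)$, and then apply the classical Weyl inequality (Horn--Johnson, Theorem~4.3.1) together with its equality case. The only difference is that you spell out the $D=\pi^{1/2}$ conjugation to pass to ordinary Hermitian matrices, whereas the paper leaves that implicit and simply cites the reference.
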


\begin{proof}
	Thanks to Lemma \ref{lem:M1M2prop}\ref{state:1}, $P + P^* = M_1 + M_2$, where both $M_1$ and $M_2$ are self-adjoint matrices in $l^2(\pi)$. Desired results follow directly from Weyl's inequality, see e.g.~Theorem $4.3.1$ in \cite{HJ13}.
\end{proof}

Since $\gamma(P) = \gamma((P+P^*)/2)$, we can obtain bounds on the spectral gap of $P$ in terms of the eigenvalues of $M_1, M_2$.

\begin{corollary}\label{cor:weyl}
	With the assumptions of Theorem \ref{thm:weyl}, we have
	$$1 - \dfrac{1}{2}U \leq \gamma(P) \leq 1-\dfrac{1}{2} L,$$
	where $L := \max_{l+m = 2+n} \{\lambda_l(M_1) + \lambda_m(M_2)\}$ and $U := \min_{j+k=3} \{\lambda_j(M_1) + \lambda_k(M_2)\}$.
\end{corollary}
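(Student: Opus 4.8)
The plan is to derive Corollary~\ref{cor:weyl} directly from Theorem~\ref{thm:weyl} together with the identity $\gamma(P) = \gamma((P+P^*)/2) = 1 - \Lambda((P+P^*)/2)$ recorded in Section~\ref{sec:prelim}. Since $P+P^*$ is self-adjoint with eigenvalues arranged as $\lambda_1(P+P^*) \geq \cdots \geq \lambda_n(P+P^*)$, the top eigenvalue of $(P+P^*)/2$ on the full space $l^2(\pi)$ is $\tfrac12 \lambda_1(P+P^*)$. One must first observe that restricting to $L^2_0(\pi)$ does not remove this eigenvalue: both $M_1$ and $M_2$ fix constants (since $M_1$ is a Markov kernel and $M_2$ satisfies $M_2 \mathbf{1} = \mathbf{1}$ by Lemma~\ref{lem:M1M2prop}\ref{state:1}), so $\mathbf{1}$ is an eigenvector of $P+P^*$ with eigenvalue $2$, i.e.\ $\lambda_1(P+P^*) = 2$ is attained on the orthogonal complement of $L^2_0(\pi)$. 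Hence $\Lambda((P+P^*)/2) = \tfrac12 \lambda_2(P+P^*)$, the second-largest eigenvalue, and consequently $\gamma(P) = 1 - \tfrac12 \lambda_2(P+P^*)$.

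Next I would feed $i=2$ into both parts of Theorem~\ref{thm:weyl}. Part~(i) with $i=2$ requires $j+k = i+1 = 3$, giving $\lambda_2(P+P^*) \leq \lambda_j(M_1) + \lambda_k(M_2)$ for every admissible pair $(j,k)$ with $j+k=3$; taking the minimum over such pairs yields $\lambda_2(P+P^*) \leq U$, hence $\gamma(P) = 1 - \tfrac12\lambda_2(P+P^*) \geq 1 - \tfrac12 U$. Part~(ii) with $i=2$ requires $l+m = i+n = n+2$, giving $\lambda_2(P+P^*) \geq \lambda_l(M_1) + \lambda_m(M_2)$ for every admissible pair; taking the maximum yields $\lambda_2(P+P^*) \geq L$, hence $\gamma(P) = 1 - \tfrac12\lambda_2(P+P^*) \leq 1 - \tfrac12 L$. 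Combining the two inequalities gives the stated chain $1 - \tfrac12 U \leq \gamma(P) \leq 1 - \tfrac12 L$.

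The one point that needs a little care — and I expect it to be the main (minor) obstacle — is the justification that the relevant eigenvalue of $(P+P^*)/2$ on $L^2_0(\pi)$ is exactly $\lambda_2(P+P^*)/2$ rather than $\lambda_1(P+P^*)/2$. This rests on the fact that $\mathbf{1} \in L^2_0(\pi)^\perp$ and that $P+P^*$ leaves both $L^2_0(\pi)$ and its one-dimensional complement invariant (which follows from $\pi(P+P^*) = 2\pi$ and $(P+P^*)\mathbf{1} = 2\mathbf{1}$, both self-adjointness-compatible), so the spectrum of $(P+P^*)/2$ decomposes as $\{1\} \cup \sigma\big((P+P^*)/2 \,|\, L^2_0(\pi)\big)$, and the reversible-case identity $\gamma = 1 - \Lambda$ applies to the self-adjoint operator $(P+P^*)/2$. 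Everything else is a direct substitution into Theorem~\ref{thm:weyl} and the bookkeeping of the index constraints $j+k=3$ and $l+m=n+2$. The equality characterizations in Theorem~\ref{thm:weyl} are not needed for the corollary, so I would not invoke them.
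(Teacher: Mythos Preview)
Your proposal is correct and follows essentially the same approach as the paper: take $i=2$ in both parts of Theorem~\ref{thm:weyl} and use $\gamma(P) = \gamma((P+P^*)/2) = 1 - \lambda_2((P+P^*)/2)$. The paper's proof is terser, simply invoking this identity without your explicit justification that $\lambda_1(P+P^*)=2$ is the eigenvalue on $\mathrm{Span}\{\mathbf{1}\}$ so that the top eigenvalue on $L^2_0(\pi)$ is $\tfrac12\lambda_2(P+P^*)$.
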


\begin{proof}
	We take $i = 2$ in Theorem \ref{thm:weyl} to obtain 
	$$\dfrac{1}{2} L \leq \lambda_2((P+P^*)/2) \leq \dfrac{1}{2} U.$$
	Desired result follows by using $\gamma(P) = \gamma((P+P^*)/2) = 1 - \lambda_2((P+P^*)/2)$.
\end{proof}

\subsection{Examples: asymmetric random walk and birth-death processes with vortices}

In this section, we first show that the bounds in Corollary \ref{cor:weyl} are sharp by studying the asymmetric simple random walk on $n$-cycle and on discrete torus. We then proceed to give spectral gap bounds for birth-death processes with vortices.

\begin{example}[Asymmetric simple random walk on the $n$-cycle]\label{ex:srw}
	We first recall the asymmetric simple random walk on the $n$-cycle. We take $\mathcal{X} = \{0,1,\ldots,n-1\}$ and the transition
	matrix to be $P(j,k) = p$ for $k = j + 1 \mod{n}$, $P(j,k) = q = 1 - p$ for $k = j - 1 \mod{n}$ and $0$ otherwise. Its stationary distribution is given by $\pi(i) = 1/n$ for all $i \in \mathcal{X}$, and its time-reversal has transition matrix given by $P^* = P^T$, the transpose of $P$. In the particular case when $p = q = 1/2$, we recover the symmetric random walk with eigenvalues $(\cos(2\pi j/n))_{j=0}^{n-1}$, which have been studied in \cite{LPW09, Fill91, DS91}.
	
	We denote $l := \min\{p,q\}$ and $r := \max\{p,q\}$. Then $M_1$ and $M_2$ are given by, for $j \in \mathcal{X}$,
	\begin{align*}
	M_1(j,k) &= l, \quad \mathrm{for} \, k = j \pm 1 \mod{n}, \quad
	M_1(j,j) = 1-2l, \\
	M_2(j,k) &= r, \quad \mathrm{for} \, k = j \pm 1 \mod{n}, \quad
	M_2(j,j) = 1-2r.
	\end{align*}
	Note that $M_2$ is not a Markov kernel unless $r = p = q = 1/2$. For $p \neq 1/2$, we can interpret $M_2$ as $M_2 = G + I$, where $G := M_2 - I$ is the Markov generator on $\mathcal{X}$. Using the notation of Section \ref{sec:weyl} and observe that the additive reversiblization is the simple symmetric random walk, the unordered eigenvalues of $(P+P^*)/2$, $M_1$ and $M_2$ (see Example $3.1, 3.2$ in \cite{Fill91}) are, for $i \in \{1,\ldots,n\}$,
	\begin{align*}
	\lambda_i((P+P^*)/2) &= \cos(2\pi (i-1)/n), \\
	\lambda_i(M_1) &= 1 - 2l (1-\cos(2\pi (i-1)/n)), \\
	\lambda_i(M_2) &= 1 - 2r (1-\cos(2\pi (i-1)/n)),
	\end{align*}
	so Corollary \ref{cor:weyl} now reads $L=2\cos(2\pi/n)$, $U = 2 - 2r (1-\cos(2\pi/n))$ and
	$$r(1 - \cos (2\pi/n)) = 1 - \dfrac{1}{2}U \leq \gamma(P) = 1 - \cos (2\pi/n) = 1 - \dfrac{1}{2}L,$$
	that is, the upper bound is exactly attained and the lower bound is sharp in $n$.
\end{example}

\begin{example}[Asymmetric simple random walk on discrete torus]\label{ex:torus}
	This example investigates the asymmetric simple random walk on discrete torus $\mathbb{Z}_n^d = (\mathbb{Z} \backslash n \mathbb{Z})^d$, in which we build a product chain via the asymmetric kernel on the $n$-cycle that we studied in Example \ref{ex:srw} and we also adapt the notations therein. That is, we choose one of the $d$ coordinates at random and it will move according to the kernel $P(j,k) = p$ for $k = j + 1 \mod{n}$, $P(j,k) = q = 1 - p$ for $k = j - 1 \mod{n}$ and $0$ otherwise. Denote the Markov kernel (resp.~first Metropolis kernel, second Metropolis kernel) on $\mathbb{Z}_n^d$ by $\widetilde{P}$ (resp.~$\widetilde{M_1}$, $\widetilde{M_2}$), then we have
	\begin{align*}
		\widetilde{P} &= \dfrac{1}{d} \sum_{i=1}^d \underbrace{I \otimes \cdots \otimes I}_{i-1} \otimes P \otimes \underbrace{I \otimes \cdots \otimes I}_{d-i}, \\
		\widetilde{M_1} &= \dfrac{1}{d} \sum_{i=1}^d \underbrace{I \otimes \cdots \otimes I}_{i-1} \otimes M_1 \otimes \underbrace{I \otimes \cdots \otimes I}_{d-i}, \\
		\widetilde{M_2} &= \dfrac{1}{d} \sum_{i=1}^d \underbrace{I \otimes \cdots \otimes I}_{i-1} \otimes M_2 \otimes \underbrace{I \otimes \cdots \otimes I}_{d-i}.		
	\end{align*}
	Note that the stationary distribution is the uniform distribution on $\mathbb{Z}_n^d$. The unordered eigenvalues of $(\widetilde{P}+\widetilde{P}^*)/2$, $\widetilde{M_1}$ and $\widetilde{M_2}$ are, for $i \in \{1,\ldots,n\}$,
	\begin{align*}
	\lambda_i((\widetilde{P}+\widetilde{P}^*)/2) &= (d-1)/d + \cos(2\pi (i-1)/n)/d, \\
	\lambda_i(\widetilde{M_1}) &= 1 - 2l(1-\cos(2\pi (i-1)/n))/d, \\
	\lambda_i(\widetilde{M_2}) &= 1 - 2r (1-\cos(2\pi (i-1)/n))/d.
	\end{align*}
	and Corollary \ref{cor:weyl} now reads $L=2-2/d+2\cos(2\pi/n)/d$, $U = 2 - 2r (1-\cos(2\pi/n))/d$ and
	$$r(1 - \cos (2\pi/n))/d = 1 - \dfrac{1}{2}U \leq \gamma(P) = \dfrac{1-\cos(2\pi/n)}{d} = 1 - \dfrac{1}{2}L,$$
	that is, the upper bound is exactly attained and the lower bound is sharp in $n$.
\end{example}

\begin{example}[Inserting vortices to birth-death processes]
	Giving two-sided precise spectral gap bounds for non-reversible Markov chains is well-known to be a difficult task. For the spectral gap estimates of birth-death processes, we refer interested readers to \cite{Chen96}. We aim at using this example to show how we can give such type of estimates by means of MH reversibilization. This example is inspired by \cite{SGS10, Bie16}, which offers an interesting way to artificially create non-reversible Markov chains from reversible ones via perturbation or inserting vortices.
	It is perhaps more suitable to work in the setting of continuous-time Markov chains. We write $G^{BD}$ to be the infinitesimal generator of a birth-death process with birth rate $b_i$ and death rate $d_i$ for $i \in \mathcal{X} = \mathbb{N}_0$ with stationary distribution $\pi(i)$. Next, we denote $V$ to be the $n$-dimensional cyclic vortices given by $V(i,i) = -1/\pi(i)$ and $V(i,j) = 1/\pi(i)$ for $j = (i+1) \mod{n}$ for $i \in \{0,\ldots,n\}$. By Corollary $1$ in \cite{SGS10}, $G := G^{BD} + V$ is the generator of a non-reversible Markov chain on $\mathcal{X}$ with stationary distribution $\pi$.
	
	To analyze the left spectral gap $\gamma(G)$ of $-G$, the construction of $M_1$ and $M_2$ applies essentially in verbatim to $G$ as in Section \ref{sec:MHr}. More precisely, we define $\gamma(G)$ to be the smallest distance between the spectrum of $-G$ to $0$, i.e.
	$$\gamma(G) := - \sup_{\substack{f \in L^2_0(\pi) \\ ||f||_{\pi}=1}} \langle Gf,f \rangle_{\pi} = \gamma((G+G^*)/2),$$
	where we used $\langle Gf,f \rangle_{\pi} = \langle ((G+G^*)/2)f,f \rangle_{\pi}$ in the equality above. Note that the Peskun ordering of generator gives 
	$$\langle M_2(G)f,f \rangle_{\pi} \leq \langle Gf,f \rangle_{\pi} \leq \langle M_1(G)f,f \rangle_{\pi}.$$
	Specializing the above into our examples of birth-death processes with vortices, we take $G^* = G^{BD} + V^*$, $M_1(G) = G^{BD}$ and $M_2(G) = G^{BD} + V + V^*$. As a result, it follows that
	$$\gamma(G^{BD}) \leq \gamma(G) \leq \gamma(G^{BD}) + \gamma(V+V^*) \leq \gamma(G^{BD}) + \dfrac{2}{\min_{i \in \{0,\ldots,n\}} \pi(i)} (1 -  \cos(2\pi/n)), $$
	where we further upper bound the left spectral gap of $V+V^*$ by the symmetric random walk on $n$-cycle with birth and death rate $1/\min_{i \in \{0,\ldots,n\}} \pi(i)$. We can then specialize into various well-known examples of birth-death processes, in which we summarize the results below:
	\begin{table}[H]
		\centering
		\begin{tabular}{cccc}
			\toprule
			Process  & spectral gap bounds \\
			\midrule
			Ehrenfest with vortices & $1 \leq \gamma(G) \leq 1 + 2 (1-\cos(2\pi/n)) \max\{p^{-n},(1-p)^{-n}\}$ \\
			$M/M/1$ with vortices & $(\sqrt{\mu} - \sqrt{\lambda})^2 \leq \gamma(G) \leq (\sqrt{\mu} - \sqrt{\lambda})^2 + 2(1-\lambda/\mu)^{-1}(\mu/\lambda)^{n-1}(1-\cos(2\pi/n))$  \\
			$M/M/\infty$ with vortices & $1 \leq \gamma(G) \leq 1 + 2 (1-\cos(2\pi/n)) e^{\lambda} \max_{i \in \{0,\ldots,n\}} i! \lambda^{-i}$  \\
			GWI with vortices & $1 - \lambda \leq \gamma(G) \leq 1 - \lambda + 2 (1-\cos(2\pi/n)) \max_{i \in \{0,\ldots,n\}} \dfrac{\Gamma(r)i!}{\Gamma(r+i)} (1-\lambda)^{-r} \lambda^{-i}$   \\
			\bottomrule
		\end{tabular}
		\captionsetup{font=footnotesize}
		\caption{Spectral gap bounds for various birth-death processes with $n$-dimensional cyclic vortices}
		\label{tab:addlabel}%
	\end{table}
	For the Ehrenfest model with cyclic vortices, it is constructed from a birth-death process with $b_i = p(n-i)$, $d_i = (1-p)i$ with $0 < p < 1$ on $\mathcal{X} = \{0,\ldots,n\}$ and $\pi$ being the binomial distribution with parameters $n$ and $p$. For $M/M/1$ with vortices, it is constructed from a birth-death process with $b_i = \lambda$, $d_i = \mu$ with $\mu > \lambda$ and $\pi(i) = (1-\lambda/\mu)(\lambda/\mu)^{i-1}$. For $M/M/\infty$ with vortices, it is constructed from a birth-death process with $b_i = \lambda$, $d_i = i$ and $\pi$ being the Poisson distribution with mean $\lambda$. For the Galton-Watson process with immigration (GWI) and vortices, we have $b_i = \lambda(r+i)$, $d_i = i$ and $\pi$ being the negative binomial distribution with parameters $\lambda$ and $r$.
	
	In the literature, the reciprocal of $\gamma(G)$ is commonly known as the relaxation time, which serves as a lower bound in the total variation mixing time of the chain, see for instance \cite[Theorem $1$]{J13}. This means that the reciprocal of the upper bound of the spectral gap in the table above can be used to give lower bound on the total variation mixing time. In Section \ref{sec:geomergodMHspecgap}, we formally introduce various notions of ergodicity of Markov chain as well as total variation mixing time. In this spirit, we would like to mention the work of \cite[Section $4$]{Fill91} and \cite{Chen96}. In the former, the author studied upper and lower bounds of the spectral gaps of exclusion processes, while in the latter the author investigated two-sided spectral gap bounds of classical birth-death processes.
\end{example}
%\section{Bounding the rate of convergence by Nash inequality and comparison techniques}
%
%This section is largely inspired by the seminal work of \cite{DLSC}, in which they introduced Nash inequalities for finite Markov chains. They showed that, for irreducible Markov chain, Nash inequality implies an exponential decay-rate estimate in $2 \to \infty$ norm, while for reversible chains, the reverse direction holds as well in the sense that .In addition, they analyzed random walk on a box in $\mathbb{Z}^d$ by means of comparison techniques. In this vein, we aim at establishing the connection between the rate of decay in $2 \to \infty$ norm of the non-reversible kernel $P$ and the Nash inequality for the reversible Metropolis chain $M_1$ via comparison between $P$ and $M_1$.
%
%
%\subsection{Example: asymmetric random walk on discrete torus}
%
%We recall the asymmetric simple random walk on $n$-cycle. We take $\mathcal{X} = \{0,1,\ldots,n-1\}$ and the transition
%matrix to be $P(j,k) = p$ for $k = j + 1 \mod{n}$, $P(j,k) = q = 1 - p$ for $k = j - 1 \mod{n}$ and $0$ otherwise. Its stationary distribution is given by $\pi(i) = 1/n$ for all $i \in \mathcal{X}$, and its time-reversal has transition matrix given by $P^* = P^T$, the transpose of $P$.
\section{Pseudospectral expansion}\label{sec:pseexp}
As a consequence of Lemma \ref{lem:M1M2prop}\ref{state:2}, $M_1$ and $M_2$ are self-adjoint operators on $L^2(\pi)$, which help us to obtain a \textit{pseudospectral} expansion of $P$ in terms of the spectral measures of $M_1$ and $M_2$.

\begin{theorem}\label{thm:pseexp}
	Denote $P$ to be a Markov kernel with stationary distribution $\pi$, and $M_i$ to be the MH kernel (defined in def. \ref{def:M1} and def. \ref{def:M2}) with spectral measure $\mathcal{E}_i$ for $i= 1,2$. For $x \in \mathcal{X}$, $B \in \mathcal{F}$ with $x \notin B$, we have
	\begin{align}
		P(x,B) &= \int_{\sigma(M_1)} \lambda \, \delta_x \mathcal{E}_1(d \lambda) (B \cap A_x^c) +  \int_{\sigma(M_2)} \lambda \, \delta_x \mathcal{E}_2(d \lambda) (B \cap A_x), \label{eq:psexB}\\
		P(x,\{x\}) &= \dfrac{1}{2} \left( \int_{\sigma(M_1)} \lambda \, \delta_x \mathcal{E}_1(d \lambda) (\{x\} ) + \int_{\sigma(M_2)} \lambda \, \delta_x \mathcal{E}_2(d \lambda) (\{x\}) \right) \label{eq:psexx}, \\
		P^*(x,B) &= \int_{\sigma(M_1)} \lambda \, \delta_x \mathcal{E}_1(d \lambda) (B \cap A_x) + \int_{\sigma(M_2)} \lambda \, \delta_x \mathcal{E}_2(d \lambda) (B \cap A_x^c), \label{eq:p*sexB}	
	\end{align}
	where we recall that $A_x := \{y \in E : \alpha_1(x,y) < 1\}$.
\end{theorem}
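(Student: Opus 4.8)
The plan is to unwind the definitions of $m_1$, $m_2$ and $A_x$, split the integral representing $P(x,B)$ according to whether the target point lies in $A_x$, recognize each piece as a piece of $M_1$ or $M_2$, and then feed this through the spectral theorem \eqref{eq:specthm} for the self-adjoint operators $M_1,M_2$ (Lemma \ref{lem:M1M2prop}\ref{state:2}). The starting observation is that, since $\pi(x)p^*(x,y)=\pi(y)p(y,x)$, the acceptance ratio simplifies to $\alpha_1(x,y)=\min\{p^*(x,y)/p(x,y),\,1\}$, so $A_x=\{y:p^*(x,y)<p(x,y)\}$. Consequently $m_1(x,y)=p^*(x,y)$ and $m_2(x,y)=p(x,y)$ on $A_x$, whereas $m_1(x,y)=p(x,y)$ and $m_2(x,y)=p^*(x,y)$ on $A_x^c\setminus\{x\}$.

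With this in hand, for $B\in\mathcal{F}$ with $x\notin B$ I would write
\[
P(x,B)=\int_{B\cap A_x}p(x,y)\,\mu(dy)+\int_{B\cap A_x^c}p(x,y)\,\mu(dy)=M_2(x,B\cap A_x)+M_1(x,B\cap A_x^c),
\]
the last equality using that $x\notin B$, so the atoms $r_1(x)\delta_x$ in $M_1$ and $-r_1(x)\delta_x$ in $M_2$ contribute nothing; the identity for $P^*(x,B)$ is the same computation with the roles of $p$ and $p^*$ (equivalently $m_1$ and $m_2$) swapped. To pass to the spectral integrals, apply the operator identity $M_i=\int_{\sigma(M_i)}\lambda\,\mathcal{E}_i(d\lambda)$ to the indicator $\mathds{1}_C$ of any $C\in\mathcal{F}$ and evaluate at $x$, giving $M_i(x,C)=\int_{\sigma(M_i)}\lambda\,\delta_x\mathcal{E}_i(d\lambda)(C)$; taking $C=B\cap A_x$ and $C=B\cap A_x^c$ yields \eqref{eq:psexB} and \eqref{eq:p*sexB}. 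For the diagonal I would instead evaluate the measure identity $M_1(x,\cdot)+M_2(x,\cdot)=P(x,\cdot)+P^*(x,\cdot)$ from Lemma \ref{lem:M1M2prop}\ref{state:1} at $\{x\}$; since $p(x,x)=p^*(x,x)$ (immediate from $\pi(x)p^*(x,y)=\pi(y)p(y,x)$ at $y=x$, while both sides of \eqref{eq:psexx} vanish when $\{x\}$ is $\mu$-null), we get $P(x,\{x\})=P^*(x,\{x\})$, hence $2P(x,\{x\})=M_1(x,\{x\})+M_2(x,\{x\})$, and rewriting each summand spectrally gives \eqref{eq:psexx}.

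I do not anticipate a real obstacle: the content is just the case split $y\in A_x$ versus $y\in A_x^c$ together with the cancellation of the compensating atoms of $M_1$ and $M_2$ off the diagonal. The one point that deserves care is the step $M_i(x,C)=\int_{\sigma(M_i)}\lambda\,\delta_x\mathcal{E}_i(d\lambda)(C)$: the spectral theorem is an operator identity on $L^2(\pi)$, so applied to $\mathds{1}_C$ it yields an equality of functions valid for $\pi$-almost every $x$ (and an exact, pointwise equality on a finite state space, which is the main setting of interest), and the statement should be read with that caveat.
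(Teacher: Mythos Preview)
Your proposal is correct and follows essentially the same route as the paper: split $B$ into $B\cap A_x$ and $B\cap A_x^c$, identify each piece with the corresponding $M_i$ via Definitions \ref{def:M1}--\ref{def:M2} (using $x\notin B$ to kill the $\pm r_1(x)\delta_x$ atoms), and then invoke the spectral representation \eqref{eq:specthm} for the self-adjoint $M_i$; for the diagonal you use Lemma \ref{lem:M1M2prop}\ref{state:1} together with $P(x,\{x\})=P^*(x,\{x\})$, which is exactly how the paper obtains $P(x,\{x\})=\tfrac12\bigl(M_1(x,\{x\})+M_2(x,\{x\})\bigr)$. Your added remark that the spectral identity yields a pointwise equality only $\pi$-a.e.\ (and everywhere on finite state spaces) is a fair caveat that the paper leaves implicit.
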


\begin{proof}
We first show \eqref{eq:psexB}. By Lemma \ref{lem:M1M2prop} and \eqref{eq:specthm}, for $i=1,2$,
\begin{align}\label{eq:Mispecthm}
	M_i &= \int_{\sigma(M_i)} \lambda \, \mathcal{E}_i(d \lambda).
\end{align}
Therefore, we can deduce that
\begin{align*}
	P(x,B) &= P(x,B \cap A_x^c) + P(x,B \cap A_x) \\
		   &= \int_{B \cap A_x^c} P(x,dy) + \int_{B \cap A_x} P(x,dy) \\
		   &= \int_{B \cap A_x^c} M_1(x,dy) + \int_{B \cap A_x} M_2(x,dy) \quad (\1_B(x) = 0) \\
		   &= \delta_x M_1(B \cap A_x^c) + \delta_x M_2(B \cap A_x) \\
		   &= \int_{\sigma(M_1)} \lambda \, \delta_x \mathcal{E}_1(d \lambda) (B \cap A_x^c) +  \int_{\sigma(M_2)} \lambda \, \delta_x \mathcal{E}_2(d \lambda) (B \cap A_x) . \quad \text{(By \eqref{eq:Mispecthm})}
\end{align*}
Next, in view of Lemma \ref{lem:M1M2prop}, we have
\begin{align*}
	P(x,\{x\}) &= \dfrac{1}{2} \left(M_1(x,\{x\}) + M_2(x,\{x\}) \right)\\
	&= \dfrac{1}{2} \left( \int_{\sigma(M_1)} \lambda \, \delta_x \mathcal{E}_1(d \lambda) (\{x\} ) + \int_{\sigma(M_2)} \lambda \, \delta_x \mathcal{E}_2(d \lambda) (\{x\}) \right),
\end{align*}
which gives \eqref{eq:psexx}. Finally, to show \eqref{eq:p*sexB}, we follow a very similar proof of \eqref{eq:psexB} that leads to
\begin{align*}
	P^*(x,B) &= P^*(x,B \cap A_x) + P^*(x,B \cap A_x^c) \\
	&= \delta_x M_1(B \cap A_x) + \delta_x M_2(B \cap A_x^c) \\
	&= \int_{\sigma(M_1)} \lambda \, \delta_x \mathcal{E}_1(d \lambda) (B \cap A_x) + \int_{\sigma(M_2)} \lambda \, \delta_x \mathcal{E}_2(d \lambda) (B \cap A_x^c) .
\end{align*}
\end{proof}

\begin{rk}%\leavevmode\vspace{-\baselineskip}
When $P$ is reversible, Lemma \ref{lem:M1M2prop} yields $P = M_1 = M_2$, so \eqref{eq:psexB} and \eqref{eq:psexx} reduces to
		\begin{align*}
		P(x,B) &= \int_{\sigma(P)} \lambda \, \delta_x \mathcal{E}_1(d \lambda) (B), \\
		P(x,\{x\}) &= \int_{\sigma(P)} \lambda \, \delta_x \mathcal{E}_1(d \lambda) (\{x\}) ,
		\end{align*}
		which are expected expressions (since we can invoke the spectral theorem directly on $P$).
\end{rk}

\begin{rk}
 An alternative expression for $P(x,\{x\})$ is the following: Using \eqref{eq:psexB} (with $B$ replaced by $\mathcal{X} \backslash \{x\}$), we observe that
		\begin{align*}
		P(x,\{x\}) &= 1 - P(x,\mathcal{X} \backslash \{x\}) \\
		&=1 - \int_{\sigma(M_1)} \lambda \, \delta_x \mathcal{E}_1(d \lambda) (\mathcal{X} \backslash \{x\} \cap A_x^c) -  \int_{\sigma(M_2)} \lambda \, \delta_x \mathcal{E}_2(d \lambda) (\mathcal{X} \backslash \{x\} \cap A_x) \\
		&= \int_{\sigma(M_1)} \lambda \, \delta_x \mathcal{E}_1(d \lambda) (\{x\} \cup A_x) -  \int_{\sigma(M_2)} \lambda \, \delta_x \mathcal{E}_2(d \lambda) (A_x).
		\end{align*}

\end{rk}
	
	To compute the pseudospectral expansion of the $n$-step Markov kernel $P^n$, we can make use of the Chapman-Kolmogorov equation together with \eqref{eq:psexB} and \eqref{eq:psexx}. Equivalently, we can replace $P$ by $P^n$ in Theorem \ref{thm:pseexp}, which leads to:
		
	\begin{corollary}
		Denote $P$ to be a Markov kernel with stationary measure $\pi$, so that $P^n$ is the n-step Markov kernel for $n \in \mathbb{N}$. Let $M_i(P^n)$ to be the MH kernel (defined in def. \ref{def:M1} and def. \ref{def:M2}) with spectral measure $\mathcal{E}_i(P^n)$ for $i= 1,2$. For $x \in \mathcal{X}$, $B \in \mathcal{F}$ with $x \notin B$, we have
				\begin{align}
					P^n(x,B) &= \int_{\sigma(M_1(P^n))} \lambda \, \delta_x \mathcal{E}_1(P^n)(d \lambda) (B \cap A_x^c) +  \int_{\sigma(M_2(P^n))} \lambda \, \delta_x \mathcal{E}_2(P^n)(d \lambda) (B \cap A_x), \label{eq:pnsexB}\\
					P^n(x,\{x\}) &= \dfrac{1}{2} \left( \int_{\sigma(M_1(P^n))} \lambda \, \delta_x \mathcal{E}_1(P^n)(d \lambda) (\{x\} ) + \int_{\sigma(M_2(P^n))} \lambda \, \delta_x \mathcal{E}_2(P^n)(d \lambda) (\{x\}) \right) \label{eq:pnsexx}, \\
					P^{*n}(x,B) &= \int_{\sigma(M_1(P^n))} \lambda \, \delta_x \mathcal{E}_1(P^n)(d \lambda) (B \cap A_x) + \int_{\sigma(M_2(P^n))} \lambda \, \delta_x \mathcal{E}_2(P^n)(d \lambda) (B \cap A_x^c), \label{eq:p*nsexB}	
				\end{align}
				where $A_x := \{y : \alpha_1(P^n)(x,y) < 1\} $.
		\end{corollary}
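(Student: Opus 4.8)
The plan is to obtain this corollary as a direct consequence of Theorem~\ref{thm:pseexp} by applying it not to $P$ but to the $n$-step kernel $P^n$. First I would note that if $P$ is a Markov kernel with stationary distribution $\pi$, then so is $P^n$, and moreover $(P^n)^* = (P^*)^n = P^{*n}$, so the whole machinery of Section~\ref{sec:MHr} — in particular Lemma~\ref{lem:M1M2prop}, which gives that $M_1(P^n)$ and $M_2(P^n)$ are self-adjoint on $L^2(\pi)$ with $P^n + P^{*n} = M_1(P^n) + M_2(P^n)$ — applies verbatim with $P$ replaced by $P^n$. The acceptance function $\alpha_1(P^n)$ and the set $A_x = \{y : \alpha_1(P^n)(x,y) < 1\}$ are then defined exactly as before but using the density of $P^n$ and of $P^{*n}$ with respect to the reference measure $\mu$.

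The one point that deserves a sentence of care is that the construction of $M_1$ and $M_2$ in Section~\ref{sec:MHr} requires $P^n(x,\cdot)$, $P^{*n}(x,\cdot)$ and $\pi$ to share a common dominating measure $\mu$; I would observe that this is inherited from the standing assumption on $P$ (since $P^n(x,dy) = \int P^{n-1}(x,dz) P(z,dy)$ remains absolutely continuous with respect to $\mu$ away from the atom at $x$, and the atomic part only contributes to $\{x\}$, which is excluded in \eqref{eq:pnsexB} and handled symmetrically in \eqref{eq:pnsexx}). Granting this, Theorem~\ref{thm:pseexp} applied to the kernel $P^n$ yields \eqref{eq:pnsexB}, \eqref{eq:pnsexx} and \eqref{eq:p*nsexB} immediately, with $\sigma(M_i(P^n))$ and $\mathcal{E}_i(P^n)$ in place of $\sigma(M_i)$ and $\mathcal{E}_i$. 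I would also remark, as the corollary's preamble suggests, that one can alternatively reach the same formulas by iterating \eqref{eq:psexB}--\eqref{eq:psexx} through the Chapman--Kolmogorov identity $P^n(x,B) = \int P^{n-1}(x,dz)\,P(z,B)$, but that this is the less economical route.

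There is no real obstacle here: the statement is a substitution-of-variables corollary, and the only thing to watch is making explicit that all the hypotheses of Theorem~\ref{thm:pseexp} transfer from $P$ to $P^n$. I would therefore keep the proof to two or three lines, essentially: ``$P^n$ is a Markov kernel with stationary distribution $\pi$ and $(P^n)^* = P^{*n}$, so applying Theorem~\ref{thm:pseexp} with $P$ replaced by $P^n$ gives \eqref{eq:pnsexB}, \eqref{eq:pnsexx} and \eqref{eq:p*nsexB}.''
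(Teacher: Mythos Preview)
Your proposal is correct and matches the paper's own approach: the corollary is stated in the paper immediately after Theorem~\ref{thm:pseexp} with no formal proof, only the preamble remark that one may either iterate via Chapman--Kolmogorov or, equivalently, replace $P$ by $P^n$ in Theorem~\ref{thm:pseexp}. Your observation that $(P^n)^* = P^{*n}$ and that the standing absolute-continuity assumption on $P$ passes to $P^n$ (since $p^{(n)}(x,y) = \int p^{(n-1)}(x,z)\,p(z,y)\,\mu(dz)$ is again a density with respect to $\mu$) is the right way to justify that the hypotheses of Theorem~\ref{thm:pseexp} are inherited, and is slightly more explicit than what the paper provides.
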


	Next, we specialize into the case of finite Markov chains, as more explicit results can be obtained.
	
	\begin{corollary}
		Suppose that $P$ is a Markov kernel on a finite state space $\mathcal{X}$ with stationary distribution $\pi$. Let $M_i(P^n)$ be the MH kernel with eigenvalues-eigenvectors denoted by $(\lambda^{(i)}_j,\phi^{(i)}_j)_{j=1}^{|\mathcal{X}|}$ for $i= 1,2$ (note that the dependence of $(\lambda^{(i)}_j,\phi^{(i)}_j)$ on $P^n$ is suppressed). For $x \in \mathcal{X}$ and $f \in l^2(\pi)$, we have
		\begin{align}
		 P^n(x,y) &= \begin{cases} \sum_{j = 1}^{|\mathcal{X}|} \lambda^{(2)}_j \phi^{(2)}_j(x) \phi^{(2)}_j(y) \pi(y) &\mbox{if } y \in A_x, \\
			\sum_{j = 1}^{|\mathcal{X}|} \lambda^{(1)}_j \phi^{(1)}_j(x) \phi^{(1)}_j(y)\pi(y) & \mbox{if } y \in A_x^c \backslash \{x\} , \\
			\dfrac{1}{2}(\sum_{j = 1}^{|\mathcal{X}|} \lambda^{(1)}_j \phi^{(1)}_j(x) \phi^{(1)}_j(x) \pi(x) + \sum_{j = 1}^{|\mathcal{X}|} \lambda^{(2)}_j \phi^{(2)}_j(x) \phi^{(2)}_j(x) \pi(x)) & \mbox{if } y = x. \end{cases} \label{eq:pnsexy}\\
		P^nf(x) &= \sum_{j = 1}^{|\mathcal{X}|} \lambda^{(1)}_j \phi^{(1)}_j(x) \langle f \1_{A_x^c \backslash \{x\}}, \phi^{(1)}_j \rangle_{\pi} + \sum_{j = 1}^{|\mathcal{X}|} \lambda^{(2)}_j \phi^{(2)}_j(x) \langle f \1_{A_x}, \phi^{(2)}_j \rangle_{\pi} + P^n(x,x) f(x) \label{eq:pnfsex},
		\end{align}
		where $A_x := \{y : \alpha_1(P^n)(x,y) < 1\} $.
	\end{corollary}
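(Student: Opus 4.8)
The plan is to specialize the general pseudospectral expansion of Theorem~\ref{thm:pseexp} (and its $P^n$-analogue in the preceding corollary) to the finite state space, where the spectral measures $\mathcal{E}_i$ become finite sums of rank-one projections onto eigenspaces. First I would recall that for a self-adjoint matrix $M_i$ on $l^2(\pi)$ with real eigenvalues $(\lambda^{(i)}_j)_{j=1}^{|\mathcal{X}|}$ and a corresponding $\langle\cdot,\cdot\rangle_\pi$-orthonormal basis of eigenvectors $(\phi^{(i)}_j)_{j=1}^{|\mathcal{X}|}$, the spectral decomposition reads $M_i = \sum_j \lambda^{(i)}_j \phi^{(i)}_j (\phi^{(i)}_j)^* $, so that for any $g \in l^2(\pi)$ one has $M_i g = \sum_j \lambda^{(i)}_j \langle g, \phi^{(i)}_j\rangle_\pi \, \phi^{(i)}_j$, and in kernel form $M_i(x,y) = \sum_j \lambda^{(i)}_j \phi^{(i)}_j(x)\phi^{(i)}_j(y)\pi(y)$, since $\langle \1_{\{y\}}/\pi(y), \phi\rangle_\pi = \phi(y)$. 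Applying this with $M_i = M_i(P^n)$ is exactly the finite-dimensional incarnation of $\int_{\sigma(M_i(P^n))} \lambda\, \delta_x \mathcal{E}_i(P^n)(d\lambda)(\cdot)$.

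Next I would plug these expressions into equations~\eqref{eq:pnsexB}, \eqref{eq:pnsexx}, \eqref{eq:p*nsexB}. For $y \neq x$: if $y \in A_x$ then the $M_1$-term in \eqref{eq:pnsexB} contributes nothing (its argument $B \cap A_x^c$ with $B=\{y\}$ is empty) and $P^n(x,y) = \delta_x M_2(P^n)(\{y\}) = \sum_j \lambda^{(2)}_j \phi^{(2)}_j(x)\phi^{(2)}_j(y)\pi(y)$; symmetrically, if $y \in A_x^c \setminus \{x\}$ only the $M_1$-term survives and gives the analogous sum with superscript $(1)$. For $y = x$, equation~\eqref{eq:pnsexx} directly yields the averaged formula $\tfrac12(\sum_j \lambda^{(1)}_j \phi^{(1)}_j(x)^2 \pi(x) + \sum_j \lambda^{(2)}_j \phi^{(2)}_j(x)^2\pi(x))$. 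This establishes \eqref{eq:pnsexy}. For \eqref{eq:pnfsex}, I would write $P^nf(x) = \sum_{y} P^n(x,y) f(y)$, split the sum over $y$ into the three regions $A_x$, $A_x^c\setminus\{x\}$, and $\{x\}$, substitute the three cases of \eqref{eq:pnsexy}, and in each off-diagonal region recognize $\sum_{y \in R} \phi^{(i)}_j(y)\pi(y) f(y) = \langle f\1_R, \phi^{(i)}_j\rangle_\pi$ (using that $\phi^{(i)}_j$ is real-valued, so no conjugate is needed); the diagonal term $y=x$ simply contributes $P^n(x,x)f(x)$.

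The only genuine subtlety — and the step I would flag as requiring a word of care rather than a real obstacle — is the handling of the diagonal: $x \in A_x$ is impossible since $\alpha_1(x,x)$ is irrelevant (the kernel puts only atomic mass $r_1$ there), so the decomposition of the state space seen by $P^n(x,\cdot)$ is genuinely into $A_x$, $A_x^c \setminus \{x\}$, and $\{x\}$, and one must use \eqref{eq:psexx}/\eqref{eq:pnsexx} rather than \eqref{eq:pnsexB} to get the diagonal entry, which is why the averaging factor $\tfrac12$ appears there and only there. Everything else is a routine rewriting of the spectral integral as a finite sum, so no display needs more than a line or two of justification. Consequently the corollary follows immediately from the preceding corollary together with the finite-dimensional spectral theorem for the self-adjoint matrices $M_1(P^n)$ and $M_2(P^n)$.
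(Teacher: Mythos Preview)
Your proposal is correct and follows essentially the same approach as the paper: specialize the spectral measures $\mathcal{E}_i(P^n)$ to finite rank-one sums to obtain \eqref{eq:pnsexy} from \eqref{eq:pnsexB}--\eqref{eq:pnsexx}, and then decompose $P^n f(x)$ over the three regions $A_x$, $A_x^c\setminus\{x\}$, $\{x\}$ to get \eqref{eq:pnfsex}. The only cosmetic difference is that for \eqref{eq:pnfsex} the paper works directly at the operator level (writing $P^n(f\1_{A_x^c\setminus\{x\}})(x)=M_1(P^n)(f\1_{A_x^c\setminus\{x\}})(x)$ and then expanding in the eigenbasis) rather than summing the pointwise formula \eqref{eq:pnsexy} over $y$, but the two computations are line-for-line equivalent.
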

	
	\begin{proof}
		The proof of \eqref{eq:pnsexy} follows from \eqref{eq:pnsexB} and \eqref{eq:pnsexx}. To see \eqref{eq:pnfsex}, we decompose $P^nf(x)$ into
		\begin{align}
			P^nf(x) &= P^n f \1_{A_x^c \backslash \{x\}}(x) + P^n f \1_{A_x}(x) + P^n f \1_{\{x\}}(x) \notag \\
				    &= M_1(P^n) f \1_{A_x^c \backslash \{x\}}(x) + M_2(P^n) f \1_{A_x}(x) + P^n f \1_{\{x\}}(x) \label{eq:Pnf}.
		\end{align}
		Now, since $(\phi^{(i)}_j)_{j=1}^{|\mathcal{X}|}$ is a basis on $l^2(\pi)$ for $i = 1,2$, we have
		\begin{align}
			M_1(P^n) f \1_{A_x^c \backslash \{x\}}(x) &= \sum_{j = 1}^{|\mathcal{X}|} \lambda^{(1)}_j \phi^{(1)}_j(x) \langle f \1_{A_x^c \backslash \{x\}}, \phi^{(1)}_j \rangle_{\pi}, \label{eq:M1f} \\
			M_2(P^n) f \1_{A_x}(x) &= \sum_{j = 1}^{|\mathcal{X}|} \lambda^{(2)}_j \phi^{(2)}_j(x) \langle f \1_{A_x}, \phi^2_j \rangle_{\pi} . \label{eq:M2f}
		\end{align}
		Desired result follows by collecting \eqref{eq:Pnf}, \eqref{eq:M1f} and \eqref{eq:M2f}.
	\end{proof}
	
\section{Geometric ergodicity, mixing time and MH-spectral gap}\label{sec:geomergodMHspecgap}

We will measure the speed of convergence to stationarity by the total variation distance, which is defined to be:

\begin{definition}[Total variation distance]\label{def:tv}%, $V$-norm distance]
	The total variation distance between two signed measures $\mu$ and $\nu$ is given by
	\begin{align*}
		||\mu - \nu||_{TV} := \sup_{A} |\mu(A) - \nu(A)| = \dfrac{1}{2} \sup_{|f| \leq 1} |\mu(f) - \nu(f)|.
	\end{align*}
	where $|f| := \sup_{x \in \mathcal{X}} |f(x)|$.	
%	Suppose that $V:\mathcal{X} \mapsto [1,\infty]$ is a $\pi$-a.e. measurable function. The $V$-norm distance between $\mu$ and $\nu$ is
%	\begin{align*}
%		||\mu - \nu||_{V} := \sup_{|f|_{V} \leq 1} |\mu(f) - \nu(f)|,
%	\end{align*}
%	where $|f|_{V} := \sup_{x \in \mathcal{X}} \frac{|f(x)|}{V(x)}$.
\end{definition}

We refer the readers to \cite{LPW09}, \cite{MT09} and \cite{RR04} for further properties of the total variation distance. In our main results in Section \ref{subsec:main} below, we are primarily interested in the case when $\mu$ is either $P(x,\cdot)$, $P^*(x,\cdot)$, $M_1(x,\cdot)$ or $M_2(x,\cdot)$ for $x \in \mathcal{X}$, while $\nu$ is taken to be the stationary measure $\pi$. We can now define various notions of ergodicity of a general kernel $K$.

\begin{definition}[Geometric ergodic, $\pi$-a.e. geometrically ergodic, uniformly ergodic, mixing time]\label{def:geomergod}
	Suppose that $K$ is a general kernel with stationary measure $\pi$, that is, $\pi K = \pi$. $K$ is geometrically ergodic if for each probability measure $\mu$, there exists $C_\mu< \infty$ and $\rho \in [0,1)$ such that
	\begin{align}\label{eq:geomergod}
		||\mu K^n - \pi ||_{TV} \leq C_{\mu} \rho^n, \quad n \in \mathbb{N}.
	\end{align}
	If \eqref{eq:geomergod} holds with $\mu = \delta_x$ for $\pi$-a.e. $x$, then $K$ is called $\pi$-a.e. geometrically ergodic. $K$ is uniformly ergodic if there exists $C < \infty$ and $\rho \in [0,1)$ such that
	\begin{align}\label{eq:unifergod}
	d(n) := \sup_{x \in \mathcal{X}}||K^n(x,\cdot) - \pi ||_{TV} \leq C \rho^n, \quad n \in \mathbb{N}.
	\end{align}
	The mixing time $t_{mix}(\epsilon)$ is defined to be
	$$t_{mix}(\epsilon) := \min \{ n : d(n) \leq \epsilon\}.$$
\end{definition}

Similar to the remarks after Definition \ref{def:tv} above, in subsequent sections we consider the case when the kernel $K$ is taken to be either $P, P^*, M_1$ or $M_2$. In particular, it is because of $M_2$ that we introduce these general notions of ergodicity for non-Markovian kernels. For Markov kernels that are reversible w.r.t. $\pi$, we have the following characterization of geometric ergodicity in terms of the $L^2$-spectral gap.

\begin{theorem}\label{thm:RR97}
	Suppose that $P$ is reversible with respect to $\pi$. The following statements are equivalent:
	\begin{enumerate}[label={\upshape(\roman*)}, align=left, widest=iii, leftmargin=*]
		\item $P$ is geometrically ergodic.
		
		\item $P$ admits a $L^2$-spectral gap, i.e. $\gamma^* = 1 - \beta = 1 - \max\{|\lambda|,\Lambda\} > 0.$
	\end{enumerate}
\end{theorem}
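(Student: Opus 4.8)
This is the equivalence established in \cite{RR97}, and the plan is to prove the two implications separately, the one genuinely analytic ingredient being the interplay between self-adjointness and the spectral theorem.

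For (ii)$\Rightarrow$(i) I would argue as follows. Since $P$ is reversible it is self-adjoint on $L^2(\pi)$, so $\sigma(P|L^2_0)\subseteq[-\beta,\beta]$ and the spectral theorem \eqref{eq:specthm} (or simply the identity $||P^n||=||P||^n$, valid for self-adjoint operators) gives $||P^nf||_\pi\le\beta^n||f||_\pi$ for every $f\in L^2_0(\pi)$. First take an initial law $\mu$ whose Radon--Nikodym density $h:=d\mu/d\pi$ lies in $L^2(\pi)$; then $h-1\in L^2_0(\pi)$, and reversibility gives $d(\mu P^n)/d\pi-1=P^{*n}(h-1)=P^n(h-1)$, so by Cauchy--Schwarz (recall $\pi$ is a probability measure)
$$||\mu P^n-\pi||_{TV}=\tfrac12\,||P^n(h-1)||_{L^1(\pi)}\le\tfrac12\,||P^n(h-1)||_\pi\le\tfrac12\,\beta^n\,||h-1||_\pi,$$
which is exactly \eqref{eq:geomergod} with $\rho=\beta<1$. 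For a general initial distribution, in particular $\mu=\delta_x$, I would use $\phi$-irreducibility and aperiodicity to split $\mu P^{n_0}$, for a fixed $n_0$, into an absolutely continuous part with $L^2(\pi)$ density plus a remainder of exponentially small mass, and then run the previous estimate from time $n_0$ onwards; this bookkeeping is routine and is carried out in \cite{RR97}.

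For (i)$\Rightarrow$(ii) I would argue by contraposition: assume $\beta=\max\{|\lambda(P)|,\Lambda(P)\}=1$. Replacing $P$ by $P^2$ in case $\lambda(P)=-1$ (a reversible Markov kernel whose $L^2_0$-spectrum is the squared spectrum of $P$, and which inherits geometric ergodicity from $P$), I may assume $\Lambda(P)=1$. By the variational formula \eqref{eq:luspec} there are $f_k\in L^2_0(\pi)$ with $||f_k||_\pi=1$ and $\langle Pf_k,f_k\rangle_\pi\to1$; since $||Pf_k||_\pi\le1$ this forces $||Pf_k-f_k||_\pi\to0$, hence $||P^nf_k-f_k||_\pi\le n\,||Pf_k-f_k||_\pi\to0$ for each fixed $n$, so the $f_k$ are approximate eigenfunctions at eigenvalue $1$ whose $L^2$-mass does not dissipate at any fixed time. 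I would then show this is incompatible with geometric ergodicity: if $P$ were geometrically ergodic, then for bounded $f\in L^2_0(\pi)$ and $\pi$-a.e.\ $x$ one has $|P^nf(x)|\le 2||f||_\infty\,||P^n(x,\cdot)-\pi||_{TV}\le 2||f||_\infty\,C_x\rho^n$ with $\rho<1$ fixed, and — using that for a \emph{reversible} geometrically ergodic chain the rate constant $x\mapsto C_x$ can be taken in $L^2(\pi)$ — one should be able to upgrade this, after a truncation/self-improvement argument, to a bound $||P^n||_{L^2_0\to L^2_0}\le C\rho^n$. Since for self-adjoint $P$ one has $||P||_{L^2_0\to L^2_0}=\lim_n||P^n||_{L^2_0\to L^2_0}^{1/n}\le\rho<1$, this contradicts $\beta=1$, giving (ii).

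The main obstacle is precisely that last step of the converse: extracting an $L^2(\pi)$ operator-norm decay rate from total-variation geometric ergodicity, equivalently showing that a reversible geometrically ergodic chain is $L^2(\pi)$-geometrically convergent. This is exactly where reversibility cannot be dispensed with — without it, a geometrically ergodic chain may fail to possess an $L^2(\pi)$ spectral gap, as in \cite{KM12} — and it is this very obstruction that the present paper circumvents, for non-reversible $P$, by passing to the self-adjoint kernels $M_1$ and $M_2$. One clean way to carry the step out is through the drift/minorization characterization of geometric ergodicity together with a Lyapunov function lying in $L^2(\pi)$ (and, in the reversible setting, a spectral analysis of the taboo kernel on the complement of a small set); for the complete argument I would refer to \cite{RR97}.
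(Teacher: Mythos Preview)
The paper does not actually prove this theorem; immediately after the statement it says only ``The proof can be found in \citet{RR97}.'' Your proposal therefore does strictly more than the paper: you sketch the two implications of the Roberts--Rosenthal result rather than merely citing it.

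Your sketch is broadly faithful to the original. The (ii)$\Rightarrow$(i) direction is correct as written for initial laws with $L^2(\pi)$ density, and you rightly flag that the extension to arbitrary $\mu$ (in particular $\mu=\delta_x$) requires the $\phi$-irreducibility/aperiodicity bookkeeping carried out in \cite{RR97}. For (i)$\Rightarrow$(ii), your identification of the key obstacle --- upgrading pointwise total-variation decay to an $L^2$-operator-norm bound, which is precisely where self-adjointness is indispensable --- is correct, but your approximate-eigenfunction heuristic is not quite the route \cite{RR97} takes. Their argument works directly with the spectral measure: for bounded $f\in L^2_0(\pi)$ one has $\langle P^{2n}f,f\rangle_\pi=\int\lambda^{2n}\,\mathcal{E}_f(d\lambda)$, and geometric ergodicity forces this to decay at a uniform rate $\rho<1$, whence $\mathrm{supp}(\mathcal{E}_f)\subset[-\sqrt{\rho},\sqrt{\rho}]$ for all such $f$ and the gap follows. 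This sidesteps having to argue about the $L^2(\pi)$-integrability of $x\mapsto C_x$. Since both you and the paper ultimately defer the complete argument to \cite{RR97}, there is nothing further to reconcile.
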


The proof can be found in \citet{RR97}.

\subsection{Main results}\label{subsec:main}

Following from the result in Corollary \ref{cor:specord} and \eqref{eq:specgap_Prev}, we can define a \textit{pseudo}-spectral gap by taking $1 - \max\{|\lambda(M_2)|,\Lambda(M_1)\}$. However, this gap may not be informative as $|\lambda(M_2)|$ maybe greater than or equal to $1$ since $M_2$ is not a Markov kernel in general. To define a meaningful gap, we should consider $M_2$ with $|\lambda(M_2)| < 1$. This leads us to the following definition:

\begin{definition}[MH-spectral gap]\label{def:MHspecgap}
	Suppose that $P$ is a Markov kernel with stationary measure $\pi$. Let
	\begin{align*}
		\mathcal{C} &:= \{ n \in \mathbb{N} : |\lambda(M_2(P^n))| < 1, \Lambda(M_1(P^n)) < 1 \}, \quad \mathcal{C}^c := \mathbb{N} \backslash \mathcal{C},\\
		\beta^{MH}  &:= \begin{cases}
		1, &\text{if $\mathcal{C} = \emptyset$,}\\
		\sup_{n \in \mathcal{C}}\{ |\lambda(M_2(P^n))|^{1/n}, \Lambda(M_1(P^n))^{1/n} \}, &\text{otherwise.}
		\end{cases}
	\end{align*}
	The MH-spectral gap $\gamma^{MH} = \gamma^{MH}(P)$ is given by
	$$\gamma^{MH} := 1 - \beta^{MH}.$$
\end{definition}

In this definition, we insert the idea of ``burn-in" in MCMC to define a MH-spectral gap. Precisely, we discard the spectral gaps in $\mathcal{C}^c$, and only consider the gaps in $\mathcal{C}$.

Note that for reversible $P$, the $L^2$-spectral gap is equal to the MH-spectral gap. If $P$ is geometrically ergodic, Lemma \ref{lem:M1M2prop}\ref{state:3} and Theorem \ref{thm:RR97} lead to $\mathcal{C} = \mathbb{N}$ and
\begin{align*}
	\gamma^{MH} &= 1 - \beta^{MH} \\
				&= 1 - \sup_{n \in \mathbb{N}}\{ |\lambda(P^n)|^{1/n}, \Lambda(P^n)^{1/n} \} \\
			    &= 1 - \sup_{n \in \mathbb{N}}\{ |\lambda(P)|, \Lambda(P) \} \\
			    &= 1 - \max \{ |\lambda(P)|, \Lambda(P) \} = 1 - \beta = \gamma^*.
\end{align*}
If $P$ is not geometrically ergodic, then $\beta^{MH} = \beta = 1$, so $\gamma^{MH} = \gamma^* = 0$.

As a first result, by means of Weyl's inequality, we can show that $M_2 = M_2(P)$ is a contraction whenever $P$ is a finite-state lazy and ergodic Markov kernel. Recall that a finite-state Markov chain is said to be lazy if $p(x,x) \geq 1/2$, and ergodic if $P$ is irreducible and aperiodic.

\begin{theorem}
	If $P$ is a finite-state lazy and ergodic Markov kernel, then $|\lambda(M_2(P))| \leq 1$.
\end{theorem}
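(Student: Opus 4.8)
The plan is to use Corollary \ref{cor:weyl} (or directly Theorem \ref{thm:weyl}) to control the extreme eigenvalues of $M_2 = M_2(P)$ by relating them to the eigenvalues of the additive reversiblization $(P+P^*)/2$ and of $M_1 = M_1(P)$. The key observation is that $M_2 = P + P^* - M_1$, so bounds on $\lambda((P+P^*)/2)$ and on $M_1$ transfer to $M_2$ via Weyl. Since we want $|\lambda(M_2)| \leq 1$, i.e. $\lambda_n(M_2) \geq -1$ and $\lambda_1(M_2) \leq 1$ (here $\lambda(M_2) = \lambda_n(M_2)$ in the non-increasing convention), the laziness hypothesis $p(x,x) \geq 1/2$ should be exactly what prevents the ``most negative'' eigenvalue of $M_2$ from dropping below $-1$.

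\textbf{Upper bound $\lambda_1(M_2) \leq 1$.} First I would observe that $M_1$ is a genuine reversible Markov kernel (Lemma \ref{lem:M1M2prop}), so $\lambda_n(M_1) \geq -1$, and that $(P+P^*)/2$ is a reversible stochastic matrix so $\lambda_i(P+P^*) \leq 2$ for all $i$. Applying the lower Weyl bound (Theorem \ref{thm:weyl}(ii)) with a suitable index choice, or more simply using $M_2 = (P+P^*) - M_1$ with the operator inequality $\langle M_2 f, f\rangle_\pi = \langle (P+P^*)f,f\rangle_\pi - \langle M_1 f,f\rangle_\pi \leq 2\|f\|_\pi^2 - \langle M_1 f,f\rangle_\pi \leq 2\|f\|_\pi^2 - (-\|f\|_\pi^2)$, which only gives $3$ — too weak. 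So instead I would argue more carefully: on $L^2_0(\pi)$, $\langle (P+P^*)f,f\rangle_\pi \leq 2\Lambda((P+P^*)/2)\|f\|_\pi^2 \leq 2\|f\|_\pi^2$, but actually we need the constant function handled separately since $M_2\1 = \1$. Restricting to $L^2_0(\pi)$: $\langle M_2 f, f\rangle_\pi \leq 2\langle ((P+P^*)/2)f,f\rangle_\pi - \lambda(M_1)\|f\|_\pi^2$; since $(P+P^*)/2$ is a stochastic matrix its eigenvalues on $L^2_0$ are $\leq \Lambda \leq 1$ — wait, that's wrong, $\Lambda((P+P^*)/2)$ could be close to $1$ but not exceed $1$. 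So $\langle M_2 f, f\rangle_\pi \leq 2\|f\|_\pi^2 - \lambda(M_1)\|f\|_\pi^2$, still potentially $3$. The clean route is Peskun: by Lemma \ref{lem:peskunpositive}(ii), $M_1 - M_2$ is positive semidefinite, so $\langle M_2 f,f\rangle_\pi \leq \langle M_1 f,f\rangle_\pi \leq \Lambda(M_1)\|f\|_\pi^2 \leq \|f\|_\pi^2$ for $f \in L^2_0(\pi)$, using that $M_1$ is a Markov kernel hence has spectral radius $\leq 1$. Combined with $M_2\1=\1$ this yields $\lambda_1(M_2) \leq 1$.

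\textbf{Lower bound $\lambda_n(M_2) \geq -1$.} This is where laziness enters and is \textbf{the main obstacle}. Here I would use Weyl's lower inequality: from Theorem \ref{thm:weyl}(ii) with $i = n$, $l = m = n$ is infeasible since $i + n = l + m$ forces $l + m = 2n$, so $l = m = n$: $\lambda_n(P+P^*) \geq \lambda_n(M_1) + \lambda_n(M_2)$, giving $\lambda_n(M_2) \leq \lambda_n(P+P^*) - \lambda_n(M_1)$ — wrong direction. Instead take the upper Weyl inequality (i) with $i+1 = j+k$, choosing $j = n$, $k = 1$, $i = n$: $\lambda_n(P+P^*) \leq \lambda_n(M_1) + \lambda_1(M_2)$ — also not directly about $\lambda_n(M_2)$. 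The right move: write $\lambda_n(M_2) = \min_{f \in L^2(\pi), \|f\|=1}\langle M_2 f,f\rangle_\pi$ and bound below. Since $P$ is lazy, $P = \frac{1}{2}I + \frac{1}{2}R$ for a stochastic matrix $R$ with the same stationary distribution, so $P + P^* = I + \frac{1}{2}(R + R^*)$, and $(R+R^*)/2$ is a reversible stochastic matrix with eigenvalues $\geq -1$, hence $\lambda_n(P+P^*) \geq 1 + \frac{1}{2}(-2) = 0$. Then $\lambda_n(M_2) \geq \lambda_n(P+P^*) - \lambda_1(M_1)$ by Weyl (i) with $i = n$, $j = 1$, $k = n$: $\lambda_n(P+P^*) \leq \lambda_1(M_1) + \lambda_n(M_2)$, i.e. $\lambda_n(M_2) \geq \lambda_n(P+P^*) - \lambda_1(M_1) \geq 0 - 1 = -1$, using $\lambda_1(M_1) = 1$ (Markov kernel). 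This closes the argument: ergodicity is needed only to ensure $\lambda_1(M_1) = 1$ is simple and the chain is well-behaved, but the core inequality is $\lambda_n(M_2) \geq \lambda_n(P+P^*) - \lambda_1(M_1) \geq 0 - 1 = -1$. I would double-check the exact admissible index triples in Theorem \ref{thm:weyl} and verify the laziness decomposition gives $\lambda_n(P+P^*) \geq 0$ cleanly; that verification, plus confirming $M_1$ having top eigenvalue exactly $1$ under ergodicity, is the only delicate bookkeeping.
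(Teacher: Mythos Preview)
Your proposal is correct and, once the meandering is stripped away, follows exactly the paper's argument. For the upper bound $\lambda_n(M_2) \leq 1$ the paper invokes Corollary~\ref{cor:specord} (Peskun ordering) to get $\lambda_n(M_2) \leq \lambda_n(M_1) \leq 1$, which is your final Peskun step; for the lower bound it applies Weyl's inequality Theorem~\ref{thm:weyl}(i) with $i=n$, $j=1$, $k=n$ to get $\lambda_n(P+P^*) \leq \lambda_1(M_1) + \lambda_n(M_2) = 1 + \lambda_n(M_2)$, then uses that laziness of $P$ forces laziness of $(P+P^*)/2$ and hence $\lambda_n(P+P^*) \geq 0$---exactly your decomposition $P = \tfrac{1}{2}I + \tfrac{1}{2}R$ recast.
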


\begin{proof}
	By Weyl's inequality introduced in Theorem \ref{thm:weyl}, we have 
	$$\lambda_n(P+P^*) \leq \lambda_1(M_1) + \lambda_n(M_2) = 1 + \lambda_n(M_2).$$
	Note that laziness of $P$ implies the laziness of $(P+P^*)/2$, which implies $0 \leq \lambda_n(P+P^*) \leq 1 + \lambda_n(M_2)$. On the other hand, using Corollary \ref{cor:specord} and the fact that $M_1$ is a Markov kernel, we have $\lambda_n(M_2) \leq \lambda_n(M_1) \leq 1$.
\end{proof}

Next, we present the main results in this section. Theorem \ref{thm:mainresultgeomergod} shows that a MH-spectral gap leads to geometric ergodicity.

\begin{theorem}\label{thm:mainresultgeomergod}
	Suppose that $P$ is the Markov kernel of a $\phi$-irreducible and aperiodic Markov chain with stationary measure $\pi$ on a countably generated state space $\mathcal{X}$. If $|\mathcal{C}^c| < \infty$ and $P$ admits a MH-spectral gap, i.e. $\gamma^{MH} = 1 - \beta^{MH} > 0$, then $P$ and $P^*$ are geometrically ergodic (and $\pi$-a.e. geometrically ergodic).
\end{theorem}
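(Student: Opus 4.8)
The plan is to reduce geometric ergodicity of $P$ (and $P^*$) to geometric ergodicity of an iterate $P^N$ for a well-chosen $N$, and then to use the reversibility of the MH kernels together with Theorem \ref{thm:RR97} to extract a quantitative bound. First I would pick $N \in \mathcal{C}$ large enough; this is possible since $|\mathcal{C}^c| < \infty$, so all sufficiently large integers lie in $\mathcal{C}$. For such $N$, the kernels $M_1(P^N)$ and $M_2(P^N)$ are self-adjoint on $L^2(\pi)$ by Lemma \ref{lem:M1M2prop}\ref{state:2}, and $M_1(P^N)$ is a genuine reversible Markov kernel, while $N \in \mathcal{C}$ guarantees $\Lambda(M_1(P^N)) < 1$ and $|\lambda(M_2(P^N))| < 1$. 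The existence of a MH-spectral gap $\gamma^{MH} > 0$ gives, for every $n \in \mathcal{C}$, the uniform bounds $|\lambda(M_2(P^n))| \le (\beta^{MH})^n$ and $\Lambda(M_1(P^n)) \le (\beta^{MH})^n$ with $\beta^{MH} < 1$; in particular $M_1(P^N)$ is a reversible Markov kernel with a bona fide $L^2$-spectral gap, hence geometrically ergodic by Theorem \ref{thm:RR97}.

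The next step is to transfer this to $P^N$ itself via a Peskun/Dirichlet-form comparison. By Lemma \ref{lem:peskunpositive} applied to $P^N$, we have $M_2(P^N) \succeq P^N \succeq M_1(P^N)$, and hence, using Corollary \ref{cor:specord} for $P^N$,
\begin{align*}
\lambda(M_2(P^N)) \le \inf_{\substack{f \in L^2_0(\pi) \\ \|f\|_\pi = 1}} \langle P^N f, f\rangle_\pi, \qquad \sup_{\substack{f \in L^2_0(\pi) \\ \|f\|_\pi = 1}} \langle P^N f, f\rangle_\pi \le \Lambda(M_1(P^N)).
\end{align*}
Now $\langle P^N f, f\rangle_\pi = \langle ((P^N + P^{*N})/2) f, f\rangle_\pi$, so these bounds control the numerical range of the additive reversiblization $R_N := (P^N + P^{*N})/2$, giving $\sigma(R_N | L^2_0) \subseteq [\lambda(M_2(P^N)), \Lambda(M_1(P^N))] \subseteq [-(\beta^{MH})^N, (\beta^{MH})^N]$. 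Thus the self-adjoint operator $R_N$ has an $L^2$-spectral gap of at least $1 - (\beta^{MH})^N > 0$. The slightly delicate point is that $R_N$ being a contraction on $L^2_0(\pi)$ does not by itself bound $\|P^N\|_{L^2_0 \to L^2_0}$, since $P^N$ is non-normal; however, for the purpose of geometric ergodicity one does not need an $L^2$ bound on $P^N$ directly. Instead I would invoke the known equivalence (in the $\phi$-irreducible, aperiodic, countably generated setting) between geometric ergodicity of $P$ and of its additive reversiblization — more precisely, one shows $\pi$-a.e. geometric ergodicity of $R_N$ implies the same for $P^N$ by a standard argument comparing $\|P^N(x,\cdot) - \pi\|_{TV}$-type quantities, or alternatively one appeals directly to the characterization of geometric ergodicity via a drift/minorization condition that is insensitive to the reversiblization. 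The cleanest route is: $R_N$ self-adjoint with spectral gap $\Rightarrow$ $R_N$ geometrically ergodic (Theorem \ref{thm:RR97}) $\Rightarrow$ $P^N$ geometrically ergodic (since geometric ergodicity of $P^N$ is equivalent to that of $(P^N + P^{*N})/2$; this is the nonreversible analogue of the standard fact and follows from e.g. the results of \cite{KM12} or \cite{RR97} combined with the observation that $P^N$ and $R_N$ share Lyapunov structure).

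Finally, I would lift the conclusion from $P^N$ to $P$ and to $P^*$. Geometric ergodicity of $P^N$ implies geometric ergodicity of $P$: writing any $n = kN + j$ with $0 \le j < N$, one has $\|\delta_x P^n - \pi\|_{TV} = \|\delta_x P^j (P^N)^k - \pi\|_{TV} \le C_{\delta_x P^j} \rho^k$, and since $P$ is itself a Markov kernel the finitely many "remainder" kernels $P^j$ contribute only bounded constants, yielding \eqref{eq:geomergod} for $P$ with rate $\rho^{1/N}$. The same argument applied to $P^*$ works because $M_i(P^n) = M_i((P^n)^*) = M_i(P^{*n})$ by Lemma \ref{lem:M1M2prop}\ref{state:4} (noting $(P^n)^* = (P^*)^n$), so $P^*$ satisfies exactly the same hypotheses and its additive reversiblization $(P^{*N} + P^N)/2 = R_N$ is the identical operator; hence $P^{*N}$, and then $P^*$, is geometrically ergodic. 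The $\pi$-a.e. version is automatic since geometric ergodicity as defined in Definition \ref{def:geomergod} already quantifies over all initial probability measures, in particular $\delta_x$. The main obstacle I anticipate is the passage from "the additive reversiblization $R_N$ has an $L^2$-spectral gap" to "$P^N$ is geometrically ergodic," since this is precisely the step where non-reversibility bites and one cannot simply quote the reversible theory verbatim; making this rigorous requires either the $L^\infty_V$ spectral-gap equivalence of \cite{KM12} or an explicit drift-and-minorization transfer, and care is needed because a non-normal $P^N$ can fail to have an $L^2$-spectral gap even when $R_N$ does.
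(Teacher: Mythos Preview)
Your proposal has a genuine gap, and you have correctly identified it yourself: the passage from ``$R_N = (P^N + P^{*N})/2$ has an $L^2$-spectral gap'' to ``$P^N$ is geometrically ergodic'' is not a standard fact, and the equivalence you invoke (``geometric ergodicity of $P^N$ is equivalent to that of $(P^N + P^{*N})/2$'') is not established in the references you cite. In fact this is exactly the sort of implication the whole paper is trying to work around: for discrete-time non-reversible chains, a spectral gap of the additive reversiblization controls only the real part of the spectrum of $P^N$, not its operator norm on $L^2_0$, and does not by itself yield a drift/minorization condition for $P^N$. The vague appeal to ``shared Lyapunov structure'' or to \cite{KM12} does not close this; those results go in different directions.

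The paper avoids this obstacle entirely by a direct total-variation comparison, stated as Lemma~\ref{lem:tvPtvM1M2}: for each $n$,
\[
\|P^n(x,\cdot)-\pi\|_{TV} \le \tfrac{3}{2}\,\|M_1(P^n)(x,\cdot)-\pi\|_{TV} + \tfrac{3}{2}\,\|M_2(P^n)(x,\cdot)-\pi\|_{TV},
\]
proved by splitting $\mathcal{X}$ into $A_{x,n}$, $A_{x,n}^c\setminus\{x\}$, and $\{x\}$ and observing that on each piece $P^n$ agrees with one of the $M_i(P^n)$ (or their average at the diagonal). One then applies the reversible theory (Theorem~\ref{thm:RR97}, extended verbatim to the self-adjoint but possibly non-Markov $M_2(P^n)$) to bound each $\|M_i(P^n)(x,\cdot)-\pi\|_{TV}$ by $C_x^i\,\beta(M_i(P^n)) \le C_x^i\,(\beta^{MH})^n$ for $n\in\mathcal{C}$; the finitely many $n\in\mathcal{C}^c$ are handled by a crude uniform bound. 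This gives $\pi$-a.e.\ geometric ergodicity of $P$ directly, without ever needing a spectral gap for $P^N$ itself or for its additive reversiblization. The upgrade to full geometric ergodicity uses the $\phi$-irreducible/aperiodic/countably-generated hypotheses via the argument in \cite{RR97}. The same bound covers $P^*$ because $M_i(P^n)=M_i(P^{*n})$. In short, the missing idea in your argument is precisely this pointwise TV comparison lemma, which replaces the spectral-to-ergodicity transfer you were unable to justify.
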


Next, we demonstrate a partial converse to Theorem \ref{thm:mainresultgeomergod}.

\begin{theorem}\label{thm:mainresultgeomergod2}[Partial converse of Theorem \ref{thm:mainresultgeomergod}]
	Suppose that $P$ is a Markov kernel with stationary measure $\pi$. If $P$ and $P^*$ are uniformly ergodic, then there exists a $N \in \mathbb{N}$ such that for all $n \geq N$, $M_i(P^n)$ are uniformly ergodic for $i=1,2$.
\end{theorem}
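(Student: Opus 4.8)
The plan is to exploit the well-known fact that, for Markov kernels, uniform ergodicity is equivalent to the existence of a Doeblin-type minorization condition: $P$ and $P^*$ being uniformly ergodic means there exist $m \in \mathbb{N}$, $\varepsilon > 0$ and probability measures $\nu, \nu^*$ on $\mathcal{X}$ such that $P^m(x,\cdot) \geq \varepsilon \nu(\cdot)$ and $P^{*m}(x,\cdot) \geq \varepsilon \nu^*(\cdot)$ for all $x \in \mathcal{X}$ (see e.g. \cite{MT09}). Taking $N := m$ — or any larger multiple — I would show that for $n \geq N$ one can transfer a minorization from $P^n$ and $P^{*n}$ to $M_1(P^n)$ and $M_2(P^n)$. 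The key structural input is Lemma \ref{lem:M1M2prop}\ref{state:1}, which gives $M_1(P^n) + M_2(P^n) = P^n + P^{*n}$, and Lemma \ref{lem:peskunpositive}(i), which gives the Peskun ordering $M_2(P^n) \succeq P^n \succeq M_1(P^n)$ off the diagonal.

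First I would handle $M_1(P^n)$: since $M_1(P^n)$ is itself a genuine Markov kernel (the Metropolis chain with proposal $P^n$), it suffices to produce a minorization for it. From the explicit density $m_1(x,y) = \min\{p_n(x,y), p_n^*(x,y)\}$ (writing $p_n, p_n^*$ for the densities of $P^n, P^{*n}$) together with the minorizations of $P^n$ and $P^{*n}$, I would argue that $M_1(P^n)(x,\cdot)$ dominates $\varepsilon'(\nu \wedge \nu^*)(\cdot)$ off the diagonal for a suitable $\varepsilon' > 0$ — here one must be slightly careful because the minorizing measures for $P^n$ and $P^{*n}$ need not coincide, so I would instead work directly with the joint density bound $p_n(x,y) \wedge p_n^*(x,y) \geq \varepsilon \,(d\nu/d\mu)(y) \wedge \varepsilon\,(d\nu^*/d\mu)(y)$ and then verify that the resulting minorizing measure has positive mass, using the hypothesis that $P$ (hence $P^n$) is $\phi$-irreducible so that the relevant densities are not $\mu$-a.e.\ disjoint. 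This gives the standard Doeblin condition for $M_1(P^n)$, hence its uniform ergodicity. For $M_2(P^n)$, I would use $M_2(P^n) = P^n + P^{*n} - M_1(P^n)$ and the fact that $M_2(P^n)$ is a self-adjoint operator fixing $\pi$ with total mass $1$; the Peskun ordering $M_2(P^n) \succeq P^n$ shows $M_2(P^n)(x,A) \geq P^n(x,A) \geq \varepsilon \nu(A)$ for $A$ not containing $x$, and since $M_2(P^n)$ differs from a genuine Markov kernel only in its diagonal mass, a minorization-type argument still yields geometric contraction in total variation — I would phrase the conclusion as a contraction of the signed-kernel iterates $\|M_i(P^n)^k(x,\cdot) - \pi\|_{TV} \leq C\rho^k$, which is exactly the definition of uniform ergodicity extended to general kernels as in Definition \ref{def:geomergod}.

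The main obstacle I anticipate is the bookkeeping around $M_2$ not being a Markov kernel: the classical coupling/Doeblin proof of uniform ergodicity assumes a probability transition kernel, so I would either (a) write $M_2(P^n) = P^n + P^{*n} - M_1(P^n)$ and bound $\|M_2(P^n)^k(x,\cdot)-\pi\|_{TV}$ by expanding and using the already-established uniform ergodicity of $P^n$, $P^{*n}$ (immediate from that of $P$, $P^*$) and of $M_1(P^n)$, controlling cross terms via the uniform bound on the operator norms and $\pi$-invariance of each piece; or (b) observe that the negative diagonal correction $-r_1(x)\delta_x$ only makes the off-diagonal mass larger, so the minorization on off-diagonal sets survives and the standard argument goes through with the "holding probability" reinterpreted. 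Option (a) seems cleaner and avoids re-deriving the Doeblin theorem for signed kernels. A secondary technical point is ensuring the single threshold $N$ works simultaneously for $i=1,2$ and for the minorization to be non-degenerate; this is handled by taking $N$ to be the minorization time of $\max$ over the finitely many objects involved and invoking $\phi$-irreducibility to guarantee positivity of the minorizing measures.
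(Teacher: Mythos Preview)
Your approach via Doeblin minorizations is different from, and considerably more involved than, the paper's. The paper never touches minorizations: it simply invokes Lemma~\ref{lem:tvPtvM1M2}, which already gives
\[
\|M_i(P^n)(x,\cdot)-\pi\|_{TV}\le c_i\bigl(\|P^n(x,\cdot)-\pi\|_{TV}+\|P^{*n}(x,\cdot)-\pi\|_{TV}\bigr),\qquad c_1=2,\ c_2=3.
\]
Uniform ergodicity of $P$ and $P^*$ drives the right-hand side below $1/2$ uniformly in $x$ for all large $n$, and the criterion ``$\sup_x\|K(x,\cdot)-\pi\|_{TV}<1/2$ implies uniform ergodicity'' (Proposition~7 of \cite{RR04}) then finishes in three lines. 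All the structural work was already packaged into Lemma~\ref{lem:tvPtvM1M2}.

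Your route has a genuine gap for $M_1(P^n)$: the minorizing measures $\nu$ and $\nu^*$ for $P^n$ and $P^{*n}$ may be mutually singular, so $\nu\wedge\nu^*$ can have zero mass, and invoking $\phi$-irreducibility does not by itself rule this out (irreducibility concerns the kernel, not two fixed auxiliary measures). This is repairable---for instance, iterate the minorization so that both minorizing measures are pushed close to $\pi$ in total variation, whence their infimum has positive mass---but you have not supplied that argument. For $M_2(P^n)$, your option~(a) faces exponentially many non-commuting cross terms in the expansion of $(P^n+P^{*n}-M_1(P^n))^k$, and ``controlling cross terms via the uniform bound on the operator norms'' is not a proof. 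Your option~(b) is closer in spirit to what the paper actually does, but the paper reaches the one-step bound $\sup_x\|M_i(P^n)(x,\cdot)-\pi\|_{TV}<1/2$ directly from Lemma~\ref{lem:tvPtvM1M2} rather than from an off-diagonal minorization, which is both shorter and avoids having to re-derive a Doeblin-type contraction for a kernel with possibly negative diagonal.
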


Recall that in the reversible case Theorem \ref{thm:RR97} gives the existence of $L^2$-spectral gap is equivalent to geometric ergodicity. While we hope for a result that characterizes geometric ergodicity in the non-reversible case by means of the MH-spectral gap, we only manage to show that under a stronger assumption of uniform ergodicity of both $P$ and $P^*$, $M_i(P^n)$ is uniformly ergodic for sufficiently large $n$. This implies the existence of an $L^2$-spectral gap of $M_i(P^n)$ for sufficiently large $n$, yet it is unclear whether $\beta^{MH}$ is less than $1$ (since we are taking supremum in the definition of $\beta^{MH}$).

Next, we present a result that gives a mixing time upper bound in terms of the MH-spectral gap.

\begin{corollary}\label{cor:mainresultgeomergod}
	For a finite Markov chain with Markov kernel $P$ that is irreducible, if $|\mathcal{C}^c| < \infty$ and $P$ admits a MH-spectral gap, then
	$$t_{mix}(\epsilon) \leq t^* + \dfrac{\log \left( \frac{1}{ \epsilon \pi_{min}}\right)}{\gamma^{MH}},$$
	where $t^* := \max_{n \in \mathcal{C}^c} n$ and $\pi_{min} := \min_x \pi(x)$.
\end{corollary}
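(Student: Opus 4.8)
The plan is to imitate the classical reversible $L^{2}$ mixing bound, with the finite-state pseudospectral expansion \eqref{eq:pnsexy} playing the role of the spectral decomposition of a self-adjoint $P$. Since $|\mathcal{C}^{c}|<\infty$ with $t^{\ast}=\max_{n\in\mathcal{C}^{c}}n$, every integer $n>t^{\ast}$ lies in $\mathcal{C}$, and I would carry out the whole estimate for such $n$, then convert the resulting exponential decay of $d(n)$ into a bound on $t_{mix}(\epsilon)$.

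First the spectral input. Fix $n\in\mathcal{C}$. By Lemma~\ref{lem:M1M2prop}\ref{state:1}--\ref{state:2}, $M_{1}(P^{n})$ and $M_{2}(P^{n})$ are self-adjoint on $l^{2}(\pi)$ and both fix the constant function $\mathbf{1}$ with eigenvalue $1$; since $n\in\mathcal{C}$ forces $\Lambda(M_{1}(P^{n}))<1$ and, via Corollary~\ref{cor:specord} applied to $P^{n}$, $\Lambda(M_{2}(P^{n}))\le\Lambda(M_{1}(P^{n}))<1$, this eigenvalue $1$ is simple for both kernels, so I may fix real orthonormal eigenbases $(\phi^{(i)}_{j})_{j=1}^{|\mathcal{X}|}$ of $M_{i}(P^{n})$ with $\lambda^{(i)}_{1}=1$, $\phi^{(i)}_{1}\equiv\mathbf{1}$. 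The point is that the two chains of inequalities furnished by Corollary~\ref{cor:specord}, namely $\lambda(M_{2}(P^{n}))\le\lambda(M_{1}(P^{n}))\le\Lambda(M_{1}(P^{n}))$ and $\lambda(M_{2}(P^{n}))\le\Lambda(M_{2}(P^{n}))\le\Lambda(M_{1}(P^{n}))$, pin \emph{every} non-principal eigenvalue: $|\lambda^{(i)}_{j}|\le\beta_{n}:=\max\{|\lambda(M_{2}(P^{n}))|,\Lambda(M_{1}(P^{n}))\}$ for $i=1,2$ and all $j\ge2$, whence $|\lambda^{(i)}_{j}|\le(\beta^{MH})^{n}$ by the definition of $\beta^{MH}$.

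Next I would feed this into the expansion. Set $u^{(i)}_{x}:=\sum_{j\ge2}\lambda^{(i)}_{j}\phi^{(i)}_{j}(x)\,\phi^{(i)}_{j}$, a mean-zero element of $l^{2}(\pi)$. Because the $j=1$ term of \eqref{eq:pnsexy} equals exactly $\pi(y)$, the expansion reads $P^{n}(x,y)-\pi(y)=\pi(y)\,u^{(2)}_{x}(y)$ for $y\in A_{x}$, $=\pi(y)\,u^{(1)}_{x}(y)$ for $y\in A_{x}^{c}\setminus\{x\}$, and $=\tfrac12\pi(x)\bigl(u^{(1)}_{x}(x)+u^{(2)}_{x}(x)\bigr)$ for $y=x$. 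Parseval applied to $\mathbf{1}_{\{x\}}/\pi(x)$ gives $\sum_{j}\phi^{(i)}_{j}(x)^{2}=1/\pi(x)$, hence $\|u^{(i)}_{x}\|_{\pi}^{2}=\sum_{j\ge2}(\lambda^{(i)}_{j})^{2}\phi^{(i)}_{j}(x)^{2}\le\beta_{n}^{2}/\pi(x)$ and $|u^{(i)}_{x}(x)|\le\beta_{n}/\pi(x)$. Inserting these into $4\|P^{n}(x,\cdot)-\pi\|_{TV}^{2}\le\sum_{y}(P^{n}(x,y)-\pi(y))^{2}/\pi(y)$ and splitting the sum over $A_{x}$, $A_{x}^{c}\setminus\{x\}$ and $\{x\}$ gives $4\|P^{n}(x,\cdot)-\pi\|_{TV}^{2}\le\|u^{(1)}_{x}\|_{\pi}^{2}+\|u^{(2)}_{x}\|_{\pi}^{2}+\tfrac14\pi(x)\bigl(u^{(1)}_{x}(x)+u^{(2)}_{x}(x)\bigr)^{2}\le 3\beta_{n}^{2}/\pi(x)\le 3(\beta^{MH})^{2n}/\pi_{min}$, i.e. $d(n)\le\tfrac{\sqrt3}{2}(\beta^{MH})^{n}/\sqrt{\pi_{min}}\le(\beta^{MH})^{n-t^{\ast}}/\pi_{min}$ for every $n>t^{\ast}$ (using $\beta^{MH}<1$, $\pi_{min}\le1$).

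Finally, since $n\mapsto d(n)$ is non-increasing, $t_{mix}(\epsilon)$ is at most the smallest $n>t^{\ast}$ with $(\beta^{MH})^{n-t^{\ast}}\le\epsilon\pi_{min}$, i.e. with $n-t^{\ast}\ge\log(1/(\epsilon\pi_{min}))/\log(1/\beta^{MH})$; since $\log(1/\beta^{MH})\ge1-\beta^{MH}=\gamma^{MH}$, this holds once $n\ge t^{\ast}+\log(1/(\epsilon\pi_{min}))/\gamma^{MH}$, which is the stated bound. The main obstacle is exactly the spectral bookkeeping of the second step: one must notice that controlling only $\Lambda(M_{1}(P^{n}))$ and $|\lambda(M_{2}(P^{n}))|$ already controls \emph{all} the non-principal eigenvalues of both $M_{1}(P^{n})$ and $M_{2}(P^{n})$ in modulus, via the Peskun-ordering inequalities of Corollary~\ref{cor:specord}, and one must verify that $n\in\mathcal{C}$ makes the eigenvalue $1$ simple for both kernels so that the ``stationary'' term of \eqref{eq:pnsexy} is precisely $\pi$; the remaining manipulations (Cauchy--Schwarz, Parseval, and inverting the exponential) are routine.
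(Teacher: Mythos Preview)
Your proof is correct and follows essentially the same approach as the paper: both split $P^n(x,y)-\pi(y)$ according to the partition $A_x,\ A_x^c\setminus\{x\},\ \{x\}$ from the pseudospectral expansion, replace $P^n$ by $M_2(P^n)$ or $M_1(P^n)$ on the appropriate piece, and then exploit that for $n\in\mathcal{C}$ all non-principal eigenvalues of both $M_i(P^n)$ are bounded in modulus by $(\beta^{MH})^n$ via the Peskun-ordering inequalities of Corollary~\ref{cor:specord}. The paper is terser because it invokes the reversible pointwise bound $\big|M_i(P^n)(x,y)/\pi(y)-1\big|\le e^{-n\gamma^{MH}}/\pi_{min}$ from \cite{LPW09} as a black box and then sums, whereas you carry out the spectral decomposition and Parseval/Cauchy--Schwarz step by hand to bound the $\chi^2$-distance; this is a cosmetic difference, and in fact your explicit treatment has the mild advantage of making it transparent that the argument for $M_2(P^n)$ needs only self-adjointness and the eigenvalue $1$ being simple, not that $M_2(P^n)$ is a genuine Markov kernel.
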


Corollary \ref{cor:mainresultgeomergod} can be compared with the result in the reversible case (Theorem $12.3$ in \cite{LPW09}), which shows that
$$t_{mix}(\epsilon) \leq \dfrac{\log \left( \frac{1}{ \epsilon \pi_{min}}\right)}{\gamma^*}.$$
This also follows from Corollary \ref{cor:mainresultgeomergod} since $|\mathcal{C}^c| = 0$ and $\gamma^{MH} = \gamma^*$ in the reversible case.

%\textcolor{black}{Change title below from }
%\subsection{Proofs of the main results}
%\textcolor{black}{to }
\subsection{Proofs of Theorem \ref{thm:mainresultgeomergod}, Theorem \ref{thm:mainresultgeomergod2} and Corollary \ref{cor:mainresultgeomergod}}
First, we start with the following result that allows us to control the total variation distance of $P^n$ and $P^{*n}$ to $\pi$ by means of that of $M_1(P^n)$ and $M_2(P^n)$ and vice versa. The bounds are by no means tight, yet they will serve their purpose to demonstrate geometric ergodicity in the proof of Theorem \ref{thm:mainresultgeomergod} and Theorem \ref{thm:mainresultgeomergod2}.

\begin{lemma}\label{lem:tvPtvM1M2}
	Suppose that $P$ is a Markov kernel with stationary measure $\pi$, and $M_i$ to be the MH kernel for $i = 1,2$. For $x \in \mathcal{X}$, $n \in \mathbb{N}$,
	\allowdisplaybreaks
	\begin{align*}
		||P^n(x,\cdot) - \pi ||_{TV} &\leq \dfrac{3}{2} ||M_1(P^n)(x,\cdot) - \pi ||_{TV} + \dfrac{3}{2} ||M_2(P^n)(x,\cdot) - \pi ||_{TV}, \\
		||P^{*n}(x,\cdot) - \pi ||_{TV} &\leq \dfrac{3}{2} ||M_1(P^n)(x,\cdot) - \pi ||_{TV} + \dfrac{3}{2} ||M_2(P^n)(x,\cdot) - \pi ||_{TV}, \\		
		||M_1(P^n)(x,\cdot) - \pi ||_{TV} &\leq 2 ||P^n(x,\cdot) - \pi ||_{TV} + 2 ||P^{*n}(x,\cdot) - \pi ||_{TV}, \\
		||M_2(P^n)(x,\cdot) - \pi ||_{TV} &\leq 3 ||P^n(x,\cdot) - \pi ||_{TV} + 3 ||P^{*n}(x,\cdot) - \pi ||_{TV}. \\
	\end{align*}
	%$V,V^* : \mathcal{X} \mapsto [1,\infty]$, and let $V^{min} = \min\{V,V^*\}$.
\end{lemma}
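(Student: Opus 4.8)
The plan is to compute explicitly the total variation distances among the four measures $K(x,\cdot)$, $K^*(x,\cdot)$, $M_1(P^n)(x,\cdot)$, $M_2(P^n)(x,\cdot)$ (with $K:=P^n$ and $K^*:=P^{*n}=(P^n)^*$), and then read off all four inequalities from the triangle inequality. So first I fix $n\in\mathbb{N}$ and a point $x$ with $\pi(x)>0$ (this excludes only a $\mu$-null set of points). Iterating the standing assumptions of Section \ref{sec:MHr}, $K(x,\cdot)$ and $K^*(x,\cdot)$ are absolutely continuous with densities $p_n(x,\cdot)$ and $p_n^*(x,\cdot)$ obeying $\pi(x)p_n^*(x,y)=\pi(y)p_n(y,x)$; in particular $p_n(x,x)=p_n^*(x,x)$. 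Applying Definitions \ref{def:M1} and \ref{def:M2} with proposal kernel $K$, and writing $m_1:=\min\{p_n,p_n^*\}$, $m_2:=\max\{p_n,p_n^*\}$ and $r:=1-\int m_1(x,y)\,\mu(dy)\ge 0$ (the reject probability of that Metropolis construction at $x$), one has $M_1(P^n)(x,dy)=m_1(x,y)\,\mu(dy)+r\,\delta_x(dy)$ and $M_2(P^n)(x,dy)=m_2(x,y)\,\mu(dy)-r\,\delta_x(dy)$.

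The core of the proof is the explicit computation of the pairwise differences of these measures. Set $g(y):=(p_n(x,y)-p_n^*(x,y))^+$ and $g'(y):=(p_n^*(x,y)-p_n(x,y))^+$; then $g,g'\ge 0$, they have disjoint supports, $g+g'=|p_n(x,\cdot)-p_n^*(x,\cdot)|$, and $\int g\,d\mu=\int g'\,d\mu=r$. Subtracting densities and comparing the atoms at $x$ --- which coincide for $K$ and $K^*$, because $p_n(x,x)=p_n^*(x,x)$ --- yields
\begin{align*}
	K(x,\cdot)-M_1(P^n)(x,\cdot) &= g\,\mu - r\,\delta_x,\\
	K^*(x,\cdot)-M_1(P^n)(x,\cdot) &= g'\,\mu - r\,\delta_x,\\
	K(x,\cdot)-M_2(P^n)(x,\cdot) &= -g'\,\mu + r\,\delta_x,\\
	K^*(x,\cdot)-M_2(P^n)(x,\cdot) &= -g\,\mu + r\,\delta_x,
\end{align*}
together with $K(x,\cdot)-K^*(x,\cdot)=(g-g')\,\mu$ and $M_1(P^n)(x,\cdot)-M_2(P^n)(x,\cdot)=-(g+g')\,\mu+2r\,\delta_x$. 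In each of these the absolutely continuous part and the atomic part at $x$ are mutually singular (this is exactly where $p_n(x,x)=p_n^*(x,x)$ enters, forcing $g$ to vanish at $x$ whenever $\mu$ charges $x$), so the total variation is the sum of the masses of the two parts. Using $||\nu_1-\nu_2||_{TV}=\tfrac{1}{2}\,|\nu_1-\nu_2|(\mathcal{X})$ for signed measures of equal total mass, we get $||K(x,\cdot)-M_i(P^n)(x,\cdot)||_{TV}=||K^*(x,\cdot)-M_i(P^n)(x,\cdot)||_{TV}=||K(x,\cdot)-K^*(x,\cdot)||_{TV}=r$ for $i=1,2$, and $||M_1(P^n)(x,\cdot)-M_2(P^n)(x,\cdot)||_{TV}=2r$. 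I expect this computation, together with the measure-theoretic bookkeeping near the diagonal (absolute continuity of $P^n(x,\cdot)$ and the adjoint density identity), to be the only genuinely delicate part; everything else is the triangle inequality.

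Finally, combining these identities with the triangle inequality for $||\cdot||_{TV}$ gives the four bounds. For the first, $||P^n(x,\cdot)-\pi||_{TV}\le r+||M_1(P^n)(x,\cdot)-\pi||_{TV}$, and $r=\tfrac{1}{2}||M_1(P^n)(x,\cdot)-M_2(P^n)(x,\cdot)||_{TV}\le\tfrac{1}{2}(||M_1(P^n)(x,\cdot)-\pi||_{TV}+||M_2(P^n)(x,\cdot)-\pi||_{TV})$, which yields the constant $\tfrac{3}{2}$; the bound for $||P^{*n}(x,\cdot)-\pi||_{TV}$ is identical with $K^*$ in place of $K$. For the third, $||M_1(P^n)(x,\cdot)-\pi||_{TV}\le r+||P^n(x,\cdot)-\pi||_{TV}$ and $r=||P^n(x,\cdot)-P^{*n}(x,\cdot)||_{TV}\le||P^n(x,\cdot)-\pi||_{TV}+||P^{*n}(x,\cdot)-\pi||_{TV}$, yielding the constant $2$; the fourth is identical with $M_2$ in place of $M_1$, and $2\le 3$ recovers the stated constant.
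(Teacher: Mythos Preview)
Your proof is correct and follows a genuinely different route from the paper's. The paper argues by partitioning the test set into $A\cap A_{x,n}$, $A\cap A_{x,n}^c\setminus\{x\}$ and $A\cap\{x\}$ (echoing the pseudospectral decomposition of Theorem~\ref{thm:pseexp}) and using that on each piece $P^n(x,\cdot)$ agrees with $M_2(P^n)$, with $M_1(P^n)$, or with their average respectively; the third inequality is handled by the symmetric partition, and the fourth is obtained from $M_2=P^n+P^{*n}-M_1$ (Lemma~\ref{lem:M1M2prop}\ref{state:1}) combined with the third, which is why the paper ends up with the constant~$3$. Your approach is more structural: you compute all pairwise total-variation distances among $P^n(x,\cdot)$, $P^{*n}(x,\cdot)$, $M_1(P^n)(x,\cdot)$, $M_2(P^n)(x,\cdot)$ and find that they are all either $r$ or $2r$ (with $r$ the Metropolis reject probability at~$x$), then chain triangle inequalities through~$\pi$. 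This treats all four cases uniformly and in fact yields sharper constants --- $\tfrac32,\tfrac12$ in place of $\tfrac32,\tfrac32$ for the first two inequalities, and $2$ in place of $3$ for the fourth. The only delicate step, which you correctly isolate, is the mutual singularity of the absolutely continuous and atomic parts near the diagonal, relying on $p_n(x,x)=p_n^*(x,x)$; the paper uses the same identity implicitly when it invokes $P^n(x,\{x\})=\tfrac12\bigl(M_1(P^n)(x,\{x\})+M_2(P^n)(x,\{x\})\bigr)$.
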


\begin{proof} We use the same idea as in the proof of Theorem \ref{thm:pseexp}. For any $x \in E$ let $A_{x,n} := \{y : \alpha_1(P^n)(x,y) < 1\}$. We have, for all $n \in \mathbb{N}$,
	\allowdisplaybreaks
	\begin{align*}
		||P^n(x,\cdot) - \pi ||_{TV} &= \sup_{A} |P^n(x,A) - \pi(A)| \\
		&\leq \sup_{A} |P^n(x,A \cap A_{x,n}) - \pi(A \cap A_{x,n})| + \sup_{A} |P^n(x,A \cap A_{x,n}^c \backslash \{x\}) - \pi(A \cap A_{x,n}^c \backslash \{x\})| \\
		&\quad + |P^n(x,\{x\}) - \pi(\{x\})| \\
		&\leq ||M_2(P^n)(x,\cdot) - \pi ||_{TV} + ||M_1(P^n)(x,\cdot) - \pi ||_{TV} + \dfrac{1}{2} |M_1(P^n)(x,\{x\}) - \pi(\{x\})| \\
		&\quad + \dfrac{1}{2}|M_2(P^n)(x,\{x\}) - \pi(\{x\})| \\
		&\leq \dfrac{3}{2} ||M_1(P^n)(x,\cdot) - \pi ||_{TV} + \dfrac{3}{2} ||M_2(P^n)(x,\cdot) - \pi ||_{TV}.
	\end{align*}
	To show the inequality for $||P^{*n}(x,\cdot) - \pi ||_{TV}$, we replace $P$ by $P^*$ above and observe that $M_i(P^n) = M_i(P^{*n})$ for $i=1,2$ by Lemma \ref{lem:M1M2prop}\ref{state:4}. Next, we observe that
	\begin{align*}
	||M_1(P^n)(x,\cdot) - \pi ||_{TV} &\leq \sup_{|f| \leq 1} |M_1(P^n)(f \1_{A_{x,n}})(x) - \pi(f \1_{A_{x,n}})| \\
	&\quad + \sup_{|f| \leq 1} |M_1(P^n)(f \1_{A_{x,n}^c \backslash \{x\}})(x) - \pi(f \1_{A_{x,n}^c \backslash \{x\}})| \\
	&\quad + \sup_{|f| \leq 1} |M_1(P^n)(x,\{x\}) - \pi(\{x\})| |f(x)| \\
	&\leq ||P^{*n}(x,\cdot) - \pi ||_{TV} + ||P^n(x,\cdot) - \pi ||_{TV} + \sup_{|f| \leq 1}|P^n(x,A_{x,n}) - \pi(A_{x,n})||f(x)| \\
	&\quad + \sup_{|f| \leq 1}|P^{*n}(x,A_{x,n}^c \backslash\{x\}) - \pi(A_{x,n}^c \backslash\{x\})||f(x)| \\
	&= ||P^{*n}(x,\cdot) - \pi ||_{TV} + ||P^n(x,\cdot) - \pi ||_{TV} + \sup_{|f| \leq 1}|P^n(f(x)\1_{A_{x,n}})(x) - \pi(f(x)\1_{A_{x,n}})| \\
	&\quad + \sup_{|f| \leq 1}|P^{*n}(f(x)\1_{A_{x,n}^c \backslash\{x\}})(x) - \pi(f(x)\1_{A_{x,n}^c \backslash\{x\}})| \\
	&\leq 2 ||P^n(x,\cdot) - \pi ||_{TV} + 2 ||P^{*n}(x,\cdot) - \pi ||_{TV}.
	\end{align*}
	Finally, using the inequality above together with Lemma \ref{lem:M1M2prop}\ref{state:1} and the triangle inequality yields
	\begin{align*}
	||M_2(P^n)(x,\cdot) - \pi ||_{TV} &\leq ||P^n(x,\cdot) - \pi ||_{TV}+ ||P^{*n}(x,\cdot) - \pi ||_{TV} + ||M_1(P^n)(x,\cdot) - \pi ||_{TV} \\
	&\leq 3 ||P^n(x,\cdot) - \pi ||_{TV} + 3 ||P^{*n}(x,\cdot) - \pi ||_{TV}.
	\end{align*}	
\end{proof}

\begin{proof}[Proof of Theorem \ref{thm:mainresultgeomergod}]
	Fix $n \in \mathcal{C}$. Since $\beta^{MH} < 1$, both $M_1(P^n)$ and $M_2(P^n)$ admit $L^2$-spectral gap, that is, $\gamma(M_i(P^n)) > 0$ for $i= 1,2$. Theorem $2.1$ $(i) \rightarrow (iii)$ in \cite{RR97} gives that $M_1(P^n)$ and $M_2(P^n)$ are geometrically ergodic (even though $M_2(P^n)$ may not be a Markov kernel, the proof there will work through as long as $M_2(P^n)$ admits a $L^2$-spectral gap, since it follows from the Cauchy Schwartz inequality that the $L^1(\pi)$ norm is less than or equal to $L^2(\pi)$ norm). By Lemma \ref{lem:tvPtvM1M2}, we have
	\begin{align*}
		||P^n(x,\cdot) - \pi ||_{TV} &\leq \dfrac{3}{2} ||M_1(P^n)(x,\cdot) - \pi ||_{TV} + \dfrac{3}{2} ||M_2(P^n)(x,\cdot) - \pi ||_{TV} \\
		&\leq \dfrac{3}{2} C^1_x \beta(M_1(P^n)) + \dfrac{3}{2} C^2_x \beta(M_2(P^n)) \\
		&\leq \dfrac{3}{2} (C^1_x + C^2_x) (\beta^{MH})^n =  \dfrac{3}{2} (C^1_x + C^2_x) (1 - \gamma^{MH})^n,
	\end{align*}
	where $C^i_x$ are the constants of geometric ergodicity for $M_i(P^n)$ for $i=1,2$ as in Definition \ref{def:geomergod}, and the third inequality follows from Corollary \ref{cor:specord}. For $n \in \mathcal{C}^c$, we can bound it by a similar way. Precisely, let $\beta^{max} = \max_{n \in \mathcal{C}^c}\{ \beta(M_2(P^n)) \} = \max_{n \in \mathcal{C}^c}\{ |\lambda(M_2(P^n))| \}.$ Using again Lemma \ref{lem:tvPtvM1M2} leads to
	\begin{align*}
		||P^n(x,\cdot) - \pi ||_{TV} &\leq \dfrac{3}{2} ||M_1(P^n)(x,\cdot) - \pi ||_{TV} + \dfrac{3}{2} ||M_2(P^n)(x,\cdot) - \pi ||_{TV} \\
		&\leq \dfrac{3}{2} (C^1_x + C^2_x) \beta^{max} \\
		&\leq \dfrac{3}{2} (C^1_x + C^2_x) \dfrac{\beta^{max}}{(\beta^{MH})^{|\mathcal{C}^c|}}(\beta^{MH})^n .
	\end{align*}
	We have shown that $P$ is $\pi$-a.e. geometrically ergodic, and we can extend it to geometric ergodicity by adapting the argument in the last paragraph of page $9$ in \cite{RR97}  i.e. the direction from Proposition $1$ to Theorem $2$. (This is the place where we use the assumption of $\phi$-irreducibility and aperiodicity on a countably generated state space.) The proof of geometric ergodicity of $P^*$ is the same as above (by replacing $P$ by $P^*$) and is omitted.
%	To prove the direction from (i) to (ii), we follow a similar argument as in the proof from (iii) to (ii) in Theorem $2$ of \cite{RR97}. Since $P$ and $P^*$ are geometrically ergodic, by Proposition $1$ in \cite{RR97}, there exists a $\pi$-a.e. measurable function $V$ such that $P$ and $P^*$ are $V$-uniformly ergodic. Therefore, in view of Lemma \ref{lem:tvPtvM1M2}, for each probability measure $\mu$ and $i=1,2$, there exists $C_{\mu} < \infty$ and $\rho \in [0,1)$ such that
%	\begin{align*}
%		||\mu M_i(P^n) - \pi||_{TV} \leq ||\mu M_i(P^n) - \pi||_{V} \leq \int ||M_i(P^n)(x,\cdot) - \pi||_{V} \mu(dx) \leq C_{\mu} \rho^n,
%	\end{align*}
%	so for each signed measure $\mu \in L^2(\pi)$,
%	\begin{align}\label{eq:mainresultsignmeasureTV}
%	||\mu M_i(P^n) - \mu(\mathcal{X})\pi||_{TV} \leq  C_{\mu} \rho^n.
%	\end{align}	
%	
%	We assume the contrary that (ii) does not hold. That is, either $|\mathcal{C}^c| = \infty$ or $P$ does not admit a MH-spectral gap. In both cases, by passing through a subsequence if necessary, there exists $(M_i(P^n))_{n \geq 1}$ such that either $\mathcal{E}_2((-1,-r_n))$ or $\mathcal{E}_1((r_n,1))$ is non-empty with $r_n \to 1$.
\end{proof}

\begin{proof}[Proof of Theorem \ref{thm:mainresultgeomergod2}]
	Since $P$ and $P^*$ are uniformly ergodic, Proposition $7$ in \cite{RR04} gives
	\begin{align*}
		\sup_{x \in \mathcal{X}} ||P^n(x,\cdot) - \pi ||_{TV} &< \dfrac{1}{12}, \\
		\sup_{x \in \mathcal{X}} ||P^{*n}(x,\cdot) - \pi ||_{TV} &< \dfrac{1}{12},
	\end{align*}
	for all sufficiently large $n$. Therefore, for all sufficiently large $n$, Lemma \ref{lem:tvPtvM1M2} yields
	\begin{align*}
		\sup_{x \in \mathcal{X}} ||M_1(P^n)(x,\cdot) - \pi ||_{TV} &\leq 2 \sup_{x \in \mathcal{X}} ||P^n(x,\cdot) - \pi ||_{TV} + 2 \sup_{x \in \mathcal{X}} ||P^{*n}(x,\cdot) - \pi ||_{TV} < \dfrac{1}{3}, \\
		\sup_{x \in \mathcal{X}} ||M_2(P^n)(x,\cdot) - \pi ||_{TV} &\leq 3 \sup_{x \in \mathcal{X}} ||P^n(x,\cdot) - \pi ||_{TV} + 3 \sup_{x \in \mathcal{X}} ||P^{*n}(x,\cdot) - \pi ||_{TV} < \dfrac{1}{2}.
	\end{align*}
	Desired result follows from Proposition $7$ in \cite{RR04}.
\end{proof}

\begin{proof}[Proof of Corollary \ref{cor:mainresultgeomergod}]
	We follow a similar line of reasoning than in the proof of Theorem $12.3$ in \cite{LPW09}. For any $x,y \in \mathcal{X}$ and $n > t^*$, if $y \in A_{x,n}$, we have
	$$\bigg | \dfrac{P^n(x,y)}{\pi(y)} - 1 \bigg | = \bigg| \dfrac{M_2(P^n)(x,y)}{\pi(y)} - 1 \bigg | \leq \dfrac{e^{-n\gamma^{MH}}}{\pi_{min}},$$
	where the inequality follows from Theorem $12.3$ in \cite{LPW09}. Similarly,
	\begin{align*}
	\bigg | \dfrac{P^n(x,y)}{\pi(y)} - 1 \bigg | &=
	\bigg | \dfrac{M_1(P^n)(x,y)}{\pi(y)} - 1 \bigg | \leq \dfrac{e^{-n\gamma^{MH}}}{\pi_{min}} , \quad \text{if}\, y \in A_{x,n}^c \backslash\{x\},\\
	\bigg | \dfrac{P^n(x,y)}{\pi(y)} - 1 \bigg | &\leq  \dfrac{1}{2}\left | \dfrac{M_1(P^n)(x,y)}{\pi(y)} - 1 \right | + \dfrac{1}{2}\left | \dfrac{M_2(P^n)(x,y)}{\pi(y)} - 1 \right | \leq \dfrac{e^{-n\gamma^{MH}}}{\pi_{min}} , \quad \text{if}\, y = x.
	\end{align*}
	Lemma $6.13$ in \cite{LPW09} gives $||P^n(x,\cdot) - \pi ||_{TV} \leq \frac{e^{-n\gamma^{MH}}}{\pi_{min}}$, and the desired result follows from the definition of $t_{mix}$.
\end{proof}

\subsection{Examples}

We illustrate the usefulness of the MH-spectral gap using three examples. In the first two cases, both the additive reversiblization and multiplicative reversiblization fail to give insights on the total variation distance from stationarity, however the pseudo-spectral gap and MH-spectral gap can still provide informative bounds.

In the following examples, we will be calculating numerically $\gamma^{ps}$ and $\gamma^{MH}$ for several finite Markov chains. As these spectral gaps involve taking supremum over possibly countably infinite set, we employ a truncation procedure in the sense that we assume these gaps can be computed by, for large enough $N_0$,
\begin{align}
	\gamma^{ps} &= \max_{k \in \llbracket 1,N_0 \rrbracket} \{\gamma(P^{*k}P^k)/k\}, \label{eq:psgapprox} \\
	\gamma^{MH} &= 1 - \max_{k \in \mathcal{C} \cap \llbracket 1,N_0 \rrbracket}\{ |\lambda(M_2(P^k))|^{1/k}, \Lambda(M_1(P^k))^{1/k} \}. \label{eq:mhsgapprox}
\end{align}
Note that however we are not able to prove these two equations \eqref{eq:psgapprox} and \eqref{eq:mhsgapprox} or give bounds on $N_0$ in general. The above procedure is justified by the observation that for large enough $n$, the kernels $P^n, M_1(P^n)$ and $M_2(P^n)$ approaches $\Pi$, the Markov kernel with each row given by $\pi$, so the true value of these gaps can be found via searching in the interval $\llbracket 1,N_0 \rrbracket$ for large enough $N_0$.

\begin{example}[non-reversible walk on a triangle]\label{ex:1}
	The first example is taken from \cite{MT06} Example $5.2$. We consider a Markov chain on the triangle $\{0,1,2\}$ with transition probability given by $P(0,1) = P(1,2) = 1$ and $P(2,0) = P(2,1) = 0.5$. The stationary distribution is $\pi(0) = 0.2, \pi(1) = \pi(2) = 0.4$. The chain is non-reversible (for example, $P(1,0) = 0$, yet $P^*(1,0) = 1$), with eigenvalues $1, \frac{-1 \pm i}{2}$. The additive reversiblization bound does not work here, since the chain is not strongly aperiodic. For multiplicative reversiblization, it has been noted in \cite{MT06} that $\gamma(PP^*) = 0$, and the conductance bound does not work in this example as well.
	
	The classical bounds fail since the chain, if started at state $0$, requires two steps before its total variation distance decreases. Therefore, $\gamma^{ps}$ and $\gamma^{MH}$ are expected to give meaningful upper bounds in this case, since by definition they are catered to such situations. Indeed, finite calculations suggest that we take $N_0 = 100$ in \eqref{eq:psgapprox} and \eqref{eq:mhsgapprox}; if this is true, we have $\gamma^{ps} = \gamma(P^{*3}P^3)/3 = 0.25$, while $\gamma^{MH} = 1-\Lambda(M_1(P^6))^{1/6} = 0.151$. Comparing the results in Proposition $3.4$ in \cite{Paulin15} with Corollary \ref{cor:mainresultgeomergod}, we give a tighter upper bound in the total variation distance from stationarity, since the convergence rate is bounded by $||P^k(x,\cdot) - \pi||_{TV} \leq O((1-\gamma^{MH})^k) {\, = \,} O(0.849^k) \leq O((\sqrt{1-\gamma^{ps}})^k)  {\, = \,} O(\sqrt{0.75}^k)$.
	
	In the following, we justify $\gamma^{MH} = 1 - \Lambda(M_1(P^6))^{1/6}$ by assuming that we have an upper bound on the eigenvalues dynamics in \eqref{eq:eigenvalueub} below. While we are not able to prove this assumption of \eqref{eq:eigenvalueub}, the bound seems to be correct for finite $n$ as demonstrated in Figure \ref{fig:triangle}. We also collect a few useful properties about this three-state example:
	\begin{proposition}
		Suppose that $P$ is the three-state Markov kernel on a triangle as described above, that is,
		$$P = \begin{pmatrix} 
		0 & 1 & 0 \\ 
		0 & 0 & 1 \\ 
		0.5 & 0.5 & 0  
		\end{pmatrix}.$$		
		We have the following:
		\begin{enumerate}
			\item\label{it:triangle1} For $k \in \mathbb{N}$, $P^{4k} = (P^*)^{4k} = M_1(P^{4k}) = M_2(P^{4k}).$
			\item\label{it:triangle2} For $n \in \mathbb{N}$ with $n \geq 4$, both $M_1(P^n)$ and $M_2(P^n)$ are ergodic Markov kernel. Moreover, $\mathcal{C} = \{n \in \mathbb{N};~n \geq 4\}$.
			\item\label{it:triangle3} For $n \in \mathbb{N}$,
			\begin{align*}
				\Lambda(M_1(P^n)) &\geq \dfrac{\cos(3\pi n/4) + \cos(-3\pi n/4)}{2^{n/2+1}} \geq \lambda(M_2(P^n)).
			\end{align*}
			\item\label{it:triangle4} Assume that for $n \in \mathbb{N}$ with $n \geq 8$ we have the following upper bound:
			\begin{align}\label{eq:eigenvalueub}
				\max \{|\lambda(M_2(P^n))|, \Lambda(M_1(P^n))\} \leq \dfrac{2}{4^{\lfloor n/4\rfloor}}.
			\end{align}
			Consequently, 
			\begin{align*}
				\max_{n \geq 8} \{|\lambda(M_2(P^n))|^{1/n}, \Lambda(M_1(P^n))^{1/n}\} &\leq \max_{n \geq 8} \left(\dfrac{2}{4^{\lfloor n/4\rfloor}}\right)^{1/n} \leq \left(\dfrac{2}{4^2}\right)^{1/11}, \\
				\max_{n \in \llbracket 4,7 \rrbracket}\{ |\lambda(M_2(P^n))|^{1/n}, \Lambda(M_1(P^n))^{1/n} \} &= \Lambda(M_1(P^6))^{1/6} = \left(\dfrac{3}{8}\right)^{1/6} > \left(\dfrac{2}{4^2}\right)^{1/11}, \\
				\beta^{MH} &= \Lambda(M_1(P^6))^{1/6}.
			\end{align*}
		\end{enumerate}
	\end{proposition}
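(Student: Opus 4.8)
The plan is to use the eigenstructure of $P$ throughout. Since $\mathrm{tr}(P)=0$ and $\det(P)=1/2$ while $P\1=\1$, the eigenvalues of $P$ are $1$ and $\tfrac{-1\pm i}{2}=2^{-1/2}e^{\pm 3\pi i/4}$, and $P$ is diagonalizable, so $P^n$ has eigenvalues $1$, $\mu_n:=2^{-n/2}e^{3\pi i n/4}$ and $\overline{\mu_n}$. A direct computation of $P^2$ and $P^4$ shows that $P^4$ satisfies detailed balance with respect to $\pi$, i.e.\ $P^4$ is $\pi$-self-adjoint with non-trivial eigenvalue $-1/4$. Since a power of a self-adjoint operator is self-adjoint, $P^{4k}=(P^4)^k$ is $\pi$-reversible for every $k$, whence $(P^{4k})^*=P^{4k}$; Lemma~\ref{lem:M1M2prop}\ref{state:3} then gives $M_1(P^{4k})=M_2(P^{4k})=P^{4k}$, which proves \ref{it:triangle1}.

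For \ref{it:triangle2} I would first compute $P,P^2,P^3$ and observe that for $n\le 3$ and $y\in\{1,2\}$ one has $P^n(0,y)\,P^n(y,0)=0$, so $m_1(0,y)=\min\{p^n(0,y),p^{*n}(0,y)\}=0$; in fact $M_1(P^n)(x,\{0\})=\1_{\{0\}}(x)$, so $\1_{\{0\}}$ is a non-constant eigenfunction of $M_1(P^n)$ with eigenvalue $1$, hence $\Lambda(M_1(P^n))=1$ and $n\notin\mathcal C$. For $n\ge 5$ I would check $P^5$ is strictly positive entrywise, hence so is $P^n=P^{n-5}P^5$; together with $P^4$ being irreducible and aperiodic this makes $M_1(P^n)$ an irreducible reversible Markov kernel, so $\Lambda(M_1(P^n))<1$ by Perron--Frobenius, and Corollary~\ref{cor:specord} gives $\lambda(M_2(P^n))\le\lambda(M_1(P^n))\le\Lambda(M_1(P^n))<1$. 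The remaining claims — $\lambda(M_2(P^n))>-1$ and that $M_2(P^n)$ is an ergodic Markov kernel — I would obtain by combining explicit computation of $M_1(P^n),M_2(P^n)$ for small $n$ (and the cases $n=4k$ from \ref{it:triangle1}) with the fact that $P^n,P^{*n}\to\Pi$ (the kernel with all rows $\pi$), so $M_1(P^n),M_2(P^n)\to\Pi$, forcing $M_2(P^n)(x,x)\to\pi(x)>0$, $M_2(P^n)$ eventually strictly positive, and $\lambda(M_2(P^n))\to0$; a Doeblin minorization of $P^5$ makes this quantitative, so that only finitely many $n$ must be checked by hand.

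Item \ref{it:triangle3} is a Rayleigh-quotient argument. Let $v$ be an eigenvector of $P^n$ for $\mu_n$; since $P^{*n}\1=\1$ and $\mu_n\ne 1$ one gets $\langle v,\1\rangle_\pi=0$, i.e.\ $v\in L^2_0(\pi)$. With $S_n:=(P^n+P^{*n})/2$ (self-adjoint) and $(P^n-P^{*n})/2$ skew-adjoint, $\langle P^n v,v\rangle_\pi=\mu_n\|v\|_\pi^2$ has real part $\langle S_n v,v\rangle_\pi$, so $\mathrm{Re}(\mu_n)=\langle S_n v,v\rangle_\pi/\|v\|_\pi^2$ lies in $[\lambda(S_n|_{L^2_0}),\Lambda(S_n|_{L^2_0})]$. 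Since Corollary~\ref{cor:specord} applied to $P^n$ gives $\lambda(M_2(P^n))\le\lambda(S_n|_{L^2_0})$ and $\Lambda(S_n|_{L^2_0})\le\Lambda(M_1(P^n))$, and $\mathrm{Re}(\mu_n)=2^{-n/2}\cos(3\pi n/4)=\frac{\cos(3\pi n/4)+\cos(-3\pi n/4)}{2^{n/2+1}}$, the stated chain of inequalities follows (with equality when $n$ is a multiple of $4$, by \ref{it:triangle1}).

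For \ref{it:triangle4}, the first bound is \eqref{eq:eigenvalueub} together with monotonicity of $t\mapsto t^{1/n}$, and $\max_{n\ge 8}(2/4^{\lfloor n/4\rfloor})^{1/n}\le(2/4^2)^{1/11}$ is elementary: within each block $n\in\{4m,\dots,4m+3\}$ the quantity $(2/4^m)^{1/n}$ increases in $n$ (as $2/4^m<1$), hence is maximal at $n=4m+3$, and $m\mapsto(2/4^m)^{1/(4m+3)}$ decreases for $m\ge 2$, so the overall maximum is at $m=2$, $n=11$. The middle line requires reading off the extreme eigenvalues of the $3\times 3$ matrices $M_1(P^n),M_2(P^n)$ for $n\in\{4,5,6,7\}$ via their trace and determinant — one finds $\Lambda(M_1(P^5))=\Lambda(M_1(P^6))=3/8$, $\Lambda(M_1(P^7))=1/16$, $\Lambda(M_1(P^4))=-1/4$ and $|\lambda(M_2(P^n))|=1/4$ for $n=4,\dots,7$ — after which the comparisons reduce to the integer inequalities $3^{11}>8^5$ and $3^7>2^9$, showing $(3/8)^{1/6}$ is the maximum over $\llbracket 4,7\rrbracket$ and exceeds $(2/4^2)^{1/11}$. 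The last line then follows from \ref{it:triangle2}, which gives $\beta^{MH}=\sup_{n\ge 4}\{\cdot\}=\max\big(\max_{4\le n\le 7}\{\cdot\},\,\sup_{n\ge 8}\{\cdot\}\big)$, and the two previous bullets. The main obstacle is the $n\ge 4$ half of \ref{it:triangle2}, namely excluding $\lambda(M_2(P^n))\le -1$ and proving ergodicity of $M_2(P^n)$ for \emph{every} $n\ge 4$: Weyl's inequality is too lossy for the lower bound, so one must supplement the finitely many explicit computations with a quantitative geometric-ergodicity estimate for $P^n$ and $P^{*n}$; the rest is $3\times 3$ bookkeeping.
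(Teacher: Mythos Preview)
Your treatment of items \ref{it:triangle1} and \ref{it:triangle4} matches the paper's, and your numerical claims for $n\in\llbracket 4,7\rrbracket$ (e.g.\ $\Lambda(M_1(P^5))=\Lambda(M_1(P^6))=3/8$, $|\lambda(M_2(P^n))|=1/4$) are correct and agree with what the statement asserts.

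For item \ref{it:triangle3} you take a genuinely different route. The paper argues via traces: from $P^n+P^{*n}=M_1(P^n)+M_2(P^n)$ one gets
\[
2+4\,\lambda_3(M_2(P^n))\ \le\ \mathrm{Tr}(P^n+P^{*n})\ \le\ 2+4\,\lambda_2(M_1(P^n)),
\]
using Corollary~\ref{cor:specord} to compare each of the four non-unit eigenvalues on the right with $\lambda_2(M_1)$ and $\lambda_3(M_2)$; then $\mathrm{Tr}(P^n+P^{*n})=2+4\,\mathrm{Re}(\mu_n)$. Your Rayleigh-quotient argument with a complex eigenvector of $P^n$ is equally valid and arguably more conceptual: it shows directly that $\mathrm{Re}(\mu_n)$ is a Rayleigh quotient of $S_n=(P^n+P^{*n})/2$ on $L^2_0(\pi)$, whence the Peskun sandwich of Corollary~\ref{cor:specord} finishes. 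Both approaches yield the same inequality; the trace route is more combinatorial, yours more spectral.

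The substantive difference is in item \ref{it:triangle2}. You correctly identify that the hard part is showing, for \emph{every} $n\ge 4$, that $M_2(P^n)$ is a genuine (ergodic) Markov kernel, equivalently that $M_2(P^n)(x,x)=1-\sum_{y\neq x}\max\{p^n(x,y),p^{*n}(x,y)\}\ge 0$. Your proposed route---Doeblin minorization of $P^5$ to get a quantitative rate, then hand-check finitely many $n$---would work in principle but is roundabout, and you do not carry it out. The paper avoids this entirely with a short induction: for $n\ge 5$ one has $0<P^n(x,0)\le 1/4$ and $0<P^n(x,y)\le 1/2$ for $y\in\{1,2\}$ (and the same for $P^{*n}$), which follows immediately from $P^{n+1}=P^nP$ and the explicit base case $n=5$. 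Those column bounds give $\sum_{y\neq x}\max\{p^n,p^{*n}\}\le 1$ for every $x$ (with slack $1/4$ at $x\in\{1,2\}$), so $M_2(P^n)(x,x)\ge 0$ for all $n\ge 5$ at once; together with the explicit $n=4$ case and strict positivity of the off-diagonal, this gives ergodicity of both MH kernels and hence $\{n\ge 4\}\subseteq\mathcal C$ without any asymptotic estimate. This inductive entrywise bound is the one idea your proposal is missing.
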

	\begin{proof}
		We first prove item \eqref{it:triangle1}. Note that
		$$P^4 = P^{*4} = \begin{pmatrix} 
		0 & 0.5 & 0.5 \\ 
		0.25 & 0.25 & 0.5 \\ 
		0.25 & 0.5 & 0.25  
		\end{pmatrix},$$
		and so $P^4$ is reversible. As a result, for $k \in \mathbb{N}$, $P^{4k} = (P^*)^{4k}$. In addition, by Lemma \ref{lem:M1M2prop} item \ref{state:3}, $M_1(P^{4k}) = M_2(P^{4k}) = P^{4k}$. Next, we prove item \eqref{it:triangle2}. For $n = 4$, we have
		\begin{align*}
		P^4 = M_1(P^4) = M_2(P^4) = \begin{pmatrix} 
		0 & 0.5 & 0.5 \\ 
		0.25 & 0.25 & 0.5 \\ 
		0.25 & 0.5 & 0.25  
		\end{pmatrix}
		\end{align*}
		For $n \geq 5$, we will prove by induction that the following holds for $x \in \{0,1,2\}$:
		\begin{align}
		&0 < P^n(x,0) \leq \dfrac{1}{4}, \quad 0 < P^n(x,1) \leq \dfrac{1}{2}, \quad 0 < P^n(x,2) \leq \dfrac{1}{2}, \label{eq:triangle1} \\
		&0 < P^{*n}(x,0) \leq \dfrac{1}{4}, \quad 0 < P^{*n}(x,1) \leq \dfrac{1}{2}, \quad 0 < P^{*n}(x,2) \leq \dfrac{1}{2}. \label{eq:triangle2}
		\end{align}
		This implies that $M_1(P^n)(x,y) > 0$ for all $x \neq y$ and $M_2(P^n)(0,0) \geq 0$, $M_2(P^n)(1,1), M_2(P^n)(2,2) > 0$, and so $M_1(P^n)$ and $M_2(P^n)$ are ergodic Markov kernel. To prove \eqref{eq:triangle1} and \eqref{eq:triangle2}, when $n = 5$ these holds since
		\begin{align*}
		P^5 = \begin{pmatrix} 
		0.25 & 0.25 & 0.5 \\ 
		0.25 & 0.5 & 0.25 \\ 
		0.125 & 0.375 & 0.5  
		\end{pmatrix}, \quad P^{*5} = \begin{pmatrix} 
		0.25 & 0.5 & 0.25 \\ 
		0.125 & 0.5 & 0.375 \\ 
		0.25 & 0.25 & 0.5  
		\end{pmatrix}.
		\end{align*}
		Assume that \eqref{eq:triangle1} and \eqref{eq:triangle2} hold for some $n$, using $P^{n+1} = P^n P$ leads to
		\begin{align*}
		&0 < P^{n+1}(x,0) = P^n(x,2)/2 \leq \dfrac{1}{4}, \\
		&0 < P^{n+1}(x,1) = P^n(x,0) + \dfrac{1}{2} P^n(x,2) \leq \dfrac{1}{2}, \\ 
		&0 < P^{n+1}(x,2) = P^n(x,1) \leq \dfrac{1}{2},  \\
		&0 < P^{*n+1}(x,0) = \dfrac{1}{2}P^{*n}(x,1) \leq \dfrac{1}{4}, \\
		&0 < P^{*n+1}(x,1) = P^{*n}(x,2) \leq \dfrac{1}{2}, \\
		&0 < P^{*n+1}(x,2) = P^{*n}(x,0) + \dfrac{1}{2} P^{*n}(x,1) \leq \dfrac{1}{2}. 
		\end{align*}
		Thus, $\{n \in \mathbb{N};~n \geq 4\} \subseteq \mathcal{C}$. To see that $\{1,2,3\} \notin \mathcal{C}$, we note that
		\begin{align*}
		M_1(P) &= \begin{pmatrix} 
		1 & 0 & 0 \\ 
		0 & 0.5 & 0.5 \\ 
		0 & 0.5 & 0.5  
		\end{pmatrix}, \quad M_2(P) = \begin{pmatrix} 
		-1 & 1 & 1 \\ 
		0.5 & -0.5 & 1 \\ 
		0.5 & 1 & -0.5  
		\end{pmatrix}, \\
		M_1(P^2) &= \begin{pmatrix} 
		1 & 0 & 0 \\ 
		0 & 1 & 0 \\ 
		0 & 0 & 1  
		\end{pmatrix}, \quad M_2(P^2) = \begin{pmatrix} 
			-1 & 1 & 1 \\ 
		0.5 & 0 & 0.5 \\ 
		0.5 & 0.5 & 0  
		\end{pmatrix}, \\
		M_1(P^3) &= \begin{pmatrix} 
		1 & 0 & 0 \\ 
		0 & 0.75 & 0.25 \\ 
		0 & 0.25 & 0.75  
		\end{pmatrix}, \quad M_2(P^3) = \begin{pmatrix} 
		0 & 0.5 & 0.5 \\ 
		0.25 & 0.25 & 0.5 \\ 
		0.25 & 0.5 & 0.25  
		\end{pmatrix}.
		\end{align*}
		Next, we prove item \eqref{it:triangle3}. By writing $\mathrm{Tr}(A)$ to be the trace of the matrix $A$, by Lemma \ref{lem:M1M2prop} item \ref{state:3} again we have
		$$2 + 4 \lambda_3(M_2(P^n)) \leq \mathrm{Tr}(P^n + P^{*n}) = \mathrm{Tr}(M_1(P^n) + M_2(P^n)) \leq 2 + 4 \lambda_2(M_1(P^n)),$$
		so it suffices to prove 
		$$\dfrac{\mathrm{Tr}(P^n + P^{*n}) - 2}{4} =  \dfrac{\cos(3\pi n/4) + \cos(-3\pi n/4)}{2^{n/2+1}}$$
		The eigenvalues of $P^n$ and $P^{*n}$ are $1, e^{\pm i 3\pi n/4}/2^{n/2}$. As the trace of $P^n + P^{*n}$ must be real, we have
		$$\mathrm{Tr}(P^n + P^{*n}) = 2 + 2 \mathrm{Re}(e^{i 3\pi n/4}/2^{n/2} + e^{-i 3\pi n/4}/2^{n/2}) = 2 + \dfrac{\cos(3\pi n/4) + \cos(-3\pi n/4)}{2^{n/2-1}},$$
		where we write $\mathrm{Re}(\cdot)$ to denote the real part. The desired result follows. Finally, we prove item \eqref{it:triangle4}, in which we only need to show for $n \geq 8$ we have
		$$\left(\dfrac{2}{4^{\lfloor n/4\rfloor}}\right)^{1/n} \leq \left(\dfrac{2}{4^2}\right)^{1/11}.$$
		Writing $n = 4l + j$ for $l \in \mathbb{N}$, $l \geq 2$ and $j \in \{0,1,2,3\}$. Let
		\begin{align*}
			g(l) :&= \left(\dfrac{2}{4^l}\right)^{1/(4l+j)}, \\
			\dfrac{d}{dl} \ln g(l) &= \dfrac{-j \ln 4 - 4 \ln 2}{(4l+j)^2} < 0.
		\end{align*}
		As a result $g$ is decreasing in $l$ and hence the maximizer of $g$ occurs at $l = 2, j = 3, n = 11$.
	\end{proof}
	
\begin{figure}[H]
	\centering
	\begin{subfigure}{0.5\textwidth}
		\centering
		\includegraphics[scale=0.5]{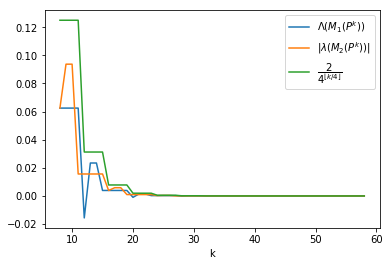}
		\caption{Plot of $\Lambda(M_1(P^k)), |\lambda(M_2(P^k))|, \left(\frac{2}{4^{\lfloor k/4\rfloor}}\right)$ against $k$}
	\end{subfigure}%
	\begin{subfigure}{0.5\textwidth}
		\centering
		\includegraphics[scale=0.5]{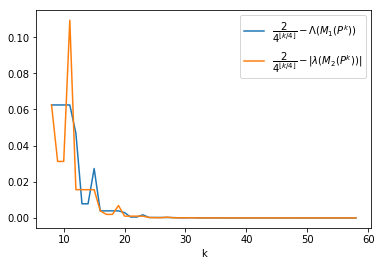}
		\caption{Plot of $\left(\frac{2}{4^{\lfloor k/4\rfloor}}\right) - \Lambda(M_1(P^k)), \left(\frac{2}{4^{\lfloor k/4\rfloor}}\right) - |\lambda(M_2(P^k))|$ against $k$. The values are all non-negative.}
	\end{subfigure}
	\caption{Non-reversible walk on a triangle}
	\label{fig:triangle}
\end{figure}
\end{example}

\begin{example}[non-reversible Markov chain sampler]\label{ex:2}
	The second example is taken from \cite{MT06} Example $5.3$ and \cite{DHN00}. Consider a Markov chain on $\mathcal{X} = \mathbb{Z}/2m\mathbb{Z}$ labeled by $\{-(m-1),\ldots,0,1,\ldots,m\}$, with transitions $P(i,i+1) = 1 - \frac{1}{m}, P(i,-i) = \frac{1}{m}$. The chain is doubly stochastic with stationary distribution being the uniform distribution on the state space. It is shown in Theorem $1$ of \cite{DHN00} that $t_{mix}(\epsilon) = \Theta(m \log (1/\epsilon))$, and in \cite{MT06} that existing upper bounds cannot provide useful information.
	
	We now fix $m = 3$. Finite calculations suggest that we can consider $N_0 = 100$ in \eqref{eq:psgapprox} and \eqref{eq:mhsgapprox}; if this is true, we demonstrate that $\gamma^{ps}$ and $\gamma^{MH}$ both give meaningful bounds. By computation, we have $\gamma^{ps} {\, = \,} \gamma(P^{*3}P^3)/3 = 0.315$, and $\gamma^{MH} {\, = \,} 1-|\lambda(M_2(P^4))|^{1/4} = 0.270$. Similar to Example \ref{ex:1}, the upper bound provided by Corollary \ref{cor:mainresultgeomergod} outperforms that in \cite{Paulin15}, since $||P^k(x,\cdot) - \pi||_{TV} \leq O((1-\gamma^{MH})^k) {\, = \,} O(0.730^k) \leq O((\sqrt{1-\gamma^{ps}})^k) {\, = \,} O(\sqrt{0.685}^k)$.
	
	Next, we consider an example of larger scale and we take $m = 100$. Finite calculations again suggest that we can consider $N_0 = 500$ in \eqref{eq:psgapprox} and \eqref{eq:mhsgapprox}. For illustration purposes, we plot $1-\max \{ |\lambda(M_2(P^k))|^{1/k}, \Lambda(M_1(P^k))^{1/k} \}$ and $\gamma(P^{*k}P^k)/k$ against $k$ in Figure \ref{fig:NRMS}. By computation, we see that $\beta^{MH} {\, = \,} 0.999914$ and $\gamma^{ps} {\, = \,} 0.008671$. In this case, the pseudo-spectral gap performs better than that of MH-spectral gap since
	$||P^k(x,\cdot) - \pi||_{TV} \leq O((\sqrt{1-\gamma^{ps}})^k) {\, = \,} O(0.995655^k) \leq O((1-\gamma^{MH})^k) {\, = \,} O(0.999914^k)$. Both bounds are asymptotically tighter than the bound obtained in \cite[Theorem $1$]{DHN00}, which gives $||P^k(x,\cdot) - \pi||_{TV} \leq (1-2^{-7})^{\lfloor k/400 \rfloor}$.
\end{example}

\begin{figure}[H]
	\centering
	\begin{subfigure}{0.5\textwidth}
		\centering
		\includegraphics[scale=0.4]{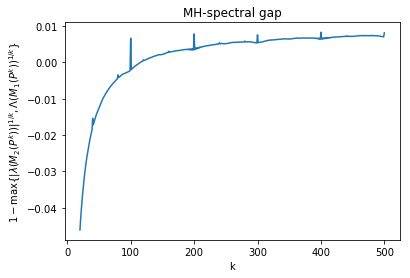}
		\caption{Plot of $1-\max \{ |\lambda(M_2(P^k))|^{1/k}, \Lambda(M_1(P^k))^{1/k} \}$ against $k$}
	\end{subfigure}%
	\begin{subfigure}{0.5\textwidth}
		\centering
		\includegraphics[scale=0.4]{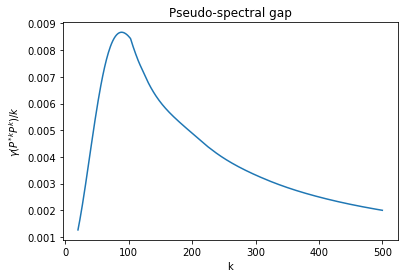}
		\caption{Plot of $\gamma(P^{*k}P^k)/k$ against $k$}
	\end{subfigure}
	\caption{Non-reversible Markov chain sampler with $m = 100$ and $N_0 = 500$}
	\label{fig:NRMS}
\end{figure}

\begin{example}[winning streak]\label{ex:3}
	The third example is the so-called winning streak Markov chain. It has been studied in Example $4.1.5$ and Section $5.3.5$ in \cite{LPW09}. Consider a Markov chain on $\mathcal{X} = \{0,\ldots,m\}$ with transitions $P(i,0) = P(i,i+1) = P(m,m) = 1/2$. One remarkable property of such a chain is that its time-reversal, $P^*$, attains \textit{exactly} the stationary distribution in $m$ steps.
	
	By a coupling argument, $d(n) \leq \frac{1}{2^n}$ for all $m$. Yet, for $P^*$, its mixing time is of order $m$. For now we fix $m = 4$, and finite calculations suggest that we can consider $N_0 = 100$ in \eqref{eq:psgapprox} and \eqref{eq:mhsgapprox}; if this is true, we have $\gamma^{ps} {\, = \,} \gamma(PP^*) = 0.5$ and $\gamma^{MH} {\, = \,} 0.138$, so both the multiplicative reversiblization and pseudo-spectral gap give a correct order of convergence rate. The performance of $\gamma^{MH}$ is poor in this example, due to the fact that $P^*$ has a much slower mixing time when compared to $P$.
	
	Next, we consider an example of larger scale and we take $m = 50$. Finite calculations again suggest that we can consider $N_0 = 100$ in \eqref{eq:psgapprox} and \eqref{eq:mhsgapprox}. For illustration purposes, we again plot $1-\max \{ |\lambda(M_2(P^k))|^{1/k}, \Lambda(M_1(P^k))^{1/k} \}$ and $\gamma(P^{*k}P^k)/k$ against $k$ in Figure \ref{fig:ws}. An interesting feature of the MH-spectral gap plot Figure \ref{fig:ws}(a) is that $1-\max \{ |\lambda(M_2(P^k))|^{1/k}, \Lambda(M_1(P^k))^{1/k} \}$ rises abruptly to $1$ when $k = m = 50$, which should be the case as the chain attains exactly $\pi$ in $m = 50$ steps. Such pattern however is missing in the graph of Figure \ref{fig:ws}(b). By computation, we see that $\beta^{MH} {\, = \,} 0.9999$ and $\gamma^{ps} {\, = \,} 0.4961$. In this case, the pseudo-spectral gap performs significantly better than that of MH-spectral gap since
	$||P^k(x,\cdot) - \pi||_{TV} \leq O((\sqrt{1-\gamma^{ps}})^k) {\, = \,} O(0.7099^k) \leq O((1-\gamma^{MH})^k) {\, = \,} O(0.9999^k)$. Both bounds are asymptotically weaker than the coupling bound as described in the second paragraph above.
\end{example}

\begin{figure}[H]
	\centering
	\begin{subfigure}{0.5\textwidth}
		\centering
		\includegraphics[scale=0.4]{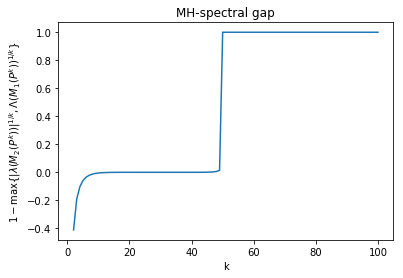}
		\caption{Plot of $1-\max \{ |\lambda(M_2(P^k))|^{1/k}, \Lambda(M_1(P^k))^{1/k} \}$ against $k$}
	\end{subfigure}%
	\begin{subfigure}{0.5\textwidth}
		\centering
		\includegraphics[scale=0.4]{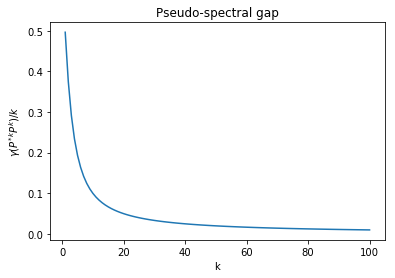}
		\caption{Plot of $\gamma(P^{*k}P^k)/k$ against $k$}
	\end{subfigure}
	\caption{Winning streak Markov chain with $m = 50$ and $N_0 = 100$}
	\label{fig:ws}
\end{figure}

\section{Variance bounds}\label{sec:varbd}

In this section, we prove variance bounds for Markov chains in terms of the MH-spectral gap. The readers should compare Theorem \ref{thm:varbd} with Lemma $12.20$ in \cite{LPW09} and Theorem $3.5$,$3.7$ in \cite{Paulin15}.

%\subsection{Main results}

\begin{theorem}\label{thm:varbd}
	Let $(X_n)_{n \geq 0}$ be a Markov chain with Markov kernel $P$, stationary measure $\pi$ and MH-spectral gap $\gamma^{MH}$. Suppose that $f \in L^2(\pi)$, and define the variance and asymptotic variance to be respectively
	\begin{align*}
		V_f &:= \mathrm{Var}_{\pi}(f), \\
		\sigma_{as}^2 &:= \lim_{n \to \infty} \dfrac{1}{n} \mathrm{Var}_{\pi} \left(\sum_{i=1}^n f(X_i) \right).
	\end{align*}
	The variance bounds are given by
	\begin{align}
		\mathrm{Var}_{\pi} \left(\sum_{i=1}^n f(X_i) \right) &\leq n V_f \left(|\mathcal{C}^c|+ \dfrac{2}{\gamma^{MH}} \right), \label{eq:varbd} \\
		\bigg| \mathrm{Var}_{\pi} \left(\sum_{i=1}^n f(X_i) \right) - n \sigma_{as}^2 \bigg| &\leq 4 V_f \left(1 + |\mathcal{C}^c| + \dfrac{4 (\beta^{MH})^{|\mathcal{C}^c| +1}}{\gamma^{MH}} \right)^2. \label{eq:varbdas}
	\end{align}
	More generally, if $f_i \in L^2(\pi)$ for $i=1,\ldots,n$, then
	\begin{align}
		\mathrm{Var}_{\pi} \left(\sum_{i=1}^n f_i(X_i) \right) &\leq \sum_{i=1}^n \mathrm{Var}_{\pi}(f_i(X_i)) \left(|\mathcal{C}^c|+ \dfrac{2}{\gamma^{MH}} \right). \label{eq:varbdg}
	\end{align}
\end{theorem}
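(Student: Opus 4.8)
The plan is to express all three quantities through the lag-$k$ correlations $c_k := \langle P^k f, f\rangle_\pi$ of the stationary chain (and, for the general statement, the bilinear forms $\langle P^k g,h\rangle_\pi$), and to control these via Corollary \ref{cor:specord} applied to the Markov kernel $P^k$. First I would normalise: replacing $f$ by $f-\E_\pi f$ (and each $f_i$ by $f_i-\E_\pi f_i$) leaves $V_f = \|f\|_\pi^2$ and every variance and asymptotic variance unchanged, so assume $\E_\pi f = 0$. Stationarity gives $\mathrm{Cov}_\pi(f(X_i),f(X_j)) = c_{|i-j|}$, hence
\[
\mathrm{Var}_\pi\Big(\sum_{i=1}^n f(X_i)\Big) = nV_f + 2\sum_{k=1}^{n-1}(n-k)\,c_k,
\qquad
\sigma_{as}^2 = V_f + 2\sum_{k=1}^{\infty} c_k,
\]
the second once absolute convergence of $\sum_k c_k$ is established (which the estimate below provides).

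The core estimate is the following. Applying Corollary \ref{cor:specord} to $P^k$, whose first and second MH kernels are $M_1(P^k)$ and $M_2(P^k)$, gives $c_k \in [\lambda(M_2(P^k)),\Lambda(M_1(P^k))]$ for $f\in L^2_0(\pi)$, so $|c_k| \le \max\{|\lambda(M_2(P^k))|,|\Lambda(M_1(P^k))|\}\,\|f\|_\pi^2$. For $k\in\mathcal C$ this is at most $(\beta^{MH})^k\|f\|_\pi^2$ by the definition of $\beta^{MH}$, while for $k\in\mathcal C^c$ it is at most $\|f\|_\pi^2$ since $P^k$ contracts $L^2_0(\pi)$. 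For the statement with distinct $f_i$ I also need the bilinear form $\langle P^k g,h\rangle_\pi$; here I would bound the operator norm of $P^k$ on $L^2_0(\pi)$, not just its numerical range, using the classical inequality $\|A\|\le 2\,w(A)$ valid for any bounded operator $A$ on a complex Hilbert space, which yields $\|P^k\|_{L^2_0\to L^2_0}\le 2(\beta^{MH})^k$ for $k\in\mathcal C$ (and $\le 1$ always), hence $|\langle P^k g,h\rangle_\pi|\le 2(\beta^{MH})^k\|g\|_\pi\|h\|_\pi$ for $k\in\mathcal C$. Because $M_i(P^n)=M_i(P^{*n})$ (Lemma \ref{lem:M1M2prop}\ref{state:4}), all of this is phrased in terms of $P$ alone and applies verbatim to $P^*$.

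With this in hand, \eqref{eq:varbd} follows by substituting the bound on $|c_k|$ into the first displayed identity, using $n-k\le n$, summing the geometric series over $k\in\mathcal C$, crudely bounding the finitely many $k\in\mathcal C^c$ terms, and simplifying via $\beta^{MH}\le 1$ (in the form $1+2\beta^{MH}/\gamma^{MH}\le 2/\gamma^{MH}$); \eqref{eq:varbdg} is the same computation for the pair sum $2\sum_{i<j}\langle \tilde f_i, P^{j-i}\tilde f_j\rangle_\pi$, followed by $\|g\|_\pi\|h\|_\pi\le\tfrac12(\|g\|_\pi^2+\|h\|_\pi^2)$ to reassemble $\sum_i\mathrm{Var}_\pi(f_i(X_i))$. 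For \eqref{eq:varbdas} I would route through the Poisson-equation / Kipnis--Varadhan decomposition: letting $g=\sum_{k\ge 0}P^k(f-\pi(f))$ solve $(I-P)g=f-\pi(f)$, one has $\sum_{i=1}^n(f(X_i)-\pi(f)) = M_n + Pg(X_0)-Pg(X_n)$ with $M_n$ a stationary martingale satisfying $\E[M_n^2]=n\sigma_{as}^2$; expanding the square, $\big|\mathrm{Var}_\pi(\sum_i f(X_i)) - n\sigma_{as}^2\big|$ is bounded by a fixed multiple of $\E_\pi[(Pg(X_0)-Pg(X_n))^2]$ and of the cross term, both of which are bounded multiples of $\|g\|_\pi^2$ --- this is the source of the \emph{square} in \eqref{eq:varbdas}. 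Finally $\|g\|_\pi\le\sum_{k\ge 0}\|P^k f\|_\pi$ is estimated by keeping the first $1+|\mathcal C^c|$ summands at full strength ($\le\|f\|_\pi$ each) and treating the remainder as the geometric tail $\sum 2(\beta^{MH})^k$ beginning past the last burn-in lag, giving the factor $1+|\mathcal C^c|+4(\beta^{MH})^{|\mathcal C^c|+1}/\gamma^{MH}$.

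The step I expect to be the main obstacle is precisely the passage from the quadratic-form bound of Corollary \ref{cor:specord} to the operator-norm / bilinear bound needed for \eqref{eq:varbdg} and for $\|g\|_\pi$: since $P^k$ is not self-adjoint, $\|P^k\|_{L^2_0\to L^2_0}$ can strictly exceed $\max\{|\lambda(M_2(P^k))|,|\Lambda(M_1(P^k))|\}$, and the factor-$2$ numerical-radius inequality is exactly what closes this gap (at the cost of a harmless constant). The remaining work is bookkeeping: one must consistently isolate the finite exceptional set $\mathcal C^c$, on which no decay is available, from the geometrically decaying part, and in \eqref{eq:varbdas} record that the geometric tail only begins beyond the last element of $\mathcal C^c$ so that it appears as $(\beta^{MH})^{|\mathcal C^c|+1}$ rather than $\beta^{MH}$. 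Everything else is standard geometric-series and triangle-inequality manipulation.
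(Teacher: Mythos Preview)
Your treatment of \eqref{eq:varbd} is essentially the paper's: both bound $\langle f,P^{k}f\rangle_\pi$ via the Peskun ordering (Corollary~\ref{cor:specord}) by $(\beta^{MH})^k V_f$ on $\mathcal C$ and by $V_f$ on $\mathcal C^c$, then sum.

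For \eqref{eq:varbdg} you take a different route. The paper does not bound $\|P^k\|_{L^2_0\to L^2_0}$; it passes (somewhat loosely) to the \emph{self-adjoint} operator $M_1(P^k)$ and uses that its operator norm equals its spectral radius, which is controlled by $\beta^{MH}$ via Corollary~\ref{cor:specord}. Your numerical-radius device $\|A\|\le 2\,w(A)$ is a nice alternative, but there is a subtlety: that inequality requires the \emph{complex} numerical radius, whereas Corollary~\ref{cor:specord} only pins down the quadratic form $\langle P^k f,f\rangle_\pi$ for real $f$ (equivalently, the real part $\langle\frac12(P^k+P^{*k})f,f\rangle_\pi$). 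The imaginary part of $\langle P^k f,f\rangle_\pi$ comes from the skew-adjoint piece $\frac12(P^k-P^{*k})$, which is invisible to $M_1,M_2$. So the step $w(P^k)\le(\beta^{MH})^k$ is not justified as stated. The paper's detour through $M_1(P^k)$ sidesteps this because $M_1$ is self-adjoint.

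For \eqref{eq:varbdas} the approaches are genuinely different. The paper invokes the resolvent identity from \cite{Paulin15},
\[
\Bigl|\mathrm{Var}_\pi\Bigl(\sum_{i=1}^n f(X_i)\Bigr)-n\sigma_{as}^2\Bigr|
=\bigl|\langle f,\,2(I-(P-\pi)^{n-1})(I-(P-\pi))^{-2}f\rangle_\pi\bigr|,
\]
and then bounds $\|(I-(P-\pi))^{-1}\|$ via Lemma~\ref{lem:Popnbd}; the square in the statement is simply the square of this resolvent norm. Your Poisson-equation/Kipnis--Varadhan route is attractive, but the assertion that the cross term $2\,\E_\pi[M_n(Pg(X_0)-Pg(X_n))]$ is ``a bounded multiple of $\|g\|_\pi^2$'' is the real gap. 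A direct Cauchy--Schwarz gives $2\sqrt{n\sigma_{as}^2}\cdot 2\|Pg\|_\pi$, which diverges with $n$; to get an $O(1)$ bound one has to exploit cancellations (e.g.\ via $\E_\pi[M_nPg(X_0)]=0$ and a careful recursion on $\E_\pi[M_nPg(X_n)]$), and this takes substantially more work than your sketch suggests. The paper's operator-theoretic identity delivers the squared factor immediately without any martingale bookkeeping.
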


%\subsection{Proofs of the main results}

Before we give the proof of Theorem \ref{thm:varbd}, we state a lemma that bounds the operator norm of $P$ by that of $M_1$ and $M_2$ in $L^2_0(\pi)$.

\begin{lemma}\label{lem:Popnbd}
	Suppose that $P$ is a Markov kernel with stationary measure $\pi$. Then
	$$||P||_{L^2_0(\pi) \to L^2_0(\pi)} \leq ||M_1||_{L^2_0(\pi) \to L^2_0(\pi)} + ||M_2||_{L^2_0(\pi) \to L^2_0(\pi)} + |\lambda(M_1(P^2))|^{1/2} + |\lambda(M_2(P^2))|^{1/2} .$$
\end{lemma}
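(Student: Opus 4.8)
The plan is to fix a unit test function and push the identity $P = M_1 + M_2 - P^*$ from Lemma \ref{lem:M1M2prop}\ref{state:1} through the quadratic form, producing a scalar quadratic inequality for $||Pf||_\pi$ that can be solved explicitly. Concretely, fix $f \in L^2_0(\pi)$ with $||f||_\pi = 1$. Since $\pi M_i = \pi$ (remark after Lemma \ref{lem:M1M2prop}), both $M_i f$ and $Pf$ lie in $L^2_0(\pi)$, so $||M_i f||_\pi \le ||M_i||_{L^2_0 \to L^2_0}$. Writing $Pf = M_1 f + M_2 f - P^* f$, pairing with $Pf$, and using $\langle P^* f, Pf\rangle_\pi = \langle f, P^2 f\rangle_\pi$, gives
\begin{align*}
	||Pf||_\pi^2 = \langle M_1 f, Pf\rangle_\pi + \langle M_2 f, Pf\rangle_\pi - \langle f, P^2 f\rangle_\pi .
\end{align*}
The left side is real, so I would take real parts; using $2\,\mathrm{Re}\langle f, P^2 f\rangle_\pi = \langle f, (P^2 + P^{*2})f\rangle_\pi = \langle f, (M_1(P^2) + M_2(P^2))f\rangle_\pi$, where the last step is Lemma \ref{lem:M1M2prop}\ref{state:1} applied to $P^2$, this becomes
\begin{align*}
	||Pf||_\pi^2 = \mathrm{Re}\langle M_1 f, Pf\rangle_\pi + \mathrm{Re}\langle M_2 f, Pf\rangle_\pi - \frac{1}{2}\langle f, M_1(P^2)f\rangle_\pi - \frac{1}{2}\langle f, M_2(P^2)f\rangle_\pi .
\end{align*}

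Next I would bound the four terms. By Cauchy--Schwarz, $\mathrm{Re}\langle M_i f, Pf\rangle_\pi \le ||M_i f||_\pi\,||Pf||_\pi \le ||M_i||_{L^2_0 \to L^2_0}\,||Pf||_\pi$. Since $M_i(P^2)$ is self-adjoint on $L^2(\pi)$ by Lemma \ref{lem:M1M2prop}\ref{state:2} and $f$ is a unit vector in $L^2_0(\pi)$, the variational formula \eqref{eq:luspec} gives $\langle f, M_i(P^2)f\rangle_\pi \ge \lambda(M_i(P^2))$, hence $-\langle f, M_i(P^2)f\rangle_\pi \le |\lambda(M_i(P^2))|$. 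Writing $a := ||M_1||_{L^2_0 \to L^2_0} + ||M_2||_{L^2_0 \to L^2_0}$ and $b := \frac{1}{2}\bigl(|\lambda(M_1(P^2))| + |\lambda(M_2(P^2))|\bigr)$, we obtain $||Pf||_\pi^2 \le a\,||Pf||_\pi + b$. As $||Pf||_\pi \ge 0$, this forces $||Pf||_\pi \le \frac{1}{2}\bigl(a + \sqrt{a^2 + 4b}\bigr)$, and since $\sqrt{a^2 + 4b} \le a + 2\sqrt{b}$ we get $||Pf||_\pi \le a + \sqrt{b}$.

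Finally, subadditivity of the square root twice (first $\sqrt{b} \le \sqrt{|\lambda(M_1(P^2))| + |\lambda(M_2(P^2))|}$, then $\sqrt{u+v} \le \sqrt{u} + \sqrt{v}$) yields $\sqrt{b} \le |\lambda(M_1(P^2))|^{1/2} + |\lambda(M_2(P^2))|^{1/2}$, so
\begin{align*}
	||Pf||_\pi \le ||M_1||_{L^2_0 \to L^2_0} + ||M_2||_{L^2_0 \to L^2_0} + |\lambda(M_1(P^2))|^{1/2} + |\lambda(M_2(P^2))|^{1/2},
\end{align*}
and taking the supremum over all such $f$ proves the lemma. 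There is no genuine obstacle here beyond bookkeeping; the only points that need care are (i) verifying $M_i f, Pf \in L^2_0(\pi)$ so that the $L^2_0$ operator norms and \eqref{eq:luspec} apply, and (ii) passing to real parts consistently, since over complex $L^2(\pi)$ the individual pairings $\langle M_i f, Pf\rangle_\pi$ and $\langle f, P^2 f\rangle_\pi$ need not be real even though their combination is.
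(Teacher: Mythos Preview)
Your argument is correct. Both your proof and the paper's rest on the same two ingredients---the identity $P+P^*=M_1+M_2$ from Lemma~\ref{lem:M1M2prop}\ref{state:1} and its $P^2$ analogue, together with the spectral lower bound $\langle f, M_i(P^2)f\rangle_\pi \ge \lambda(M_i(P^2))$---but the manipulations differ. The paper expands $||(P+P^*)f||_\pi^2 = ||(M_1+M_2)f||_\pi^2$, drops the nonnegative term $||P^*f||_\pi^2$, and arrives directly at a bound of the form $||Pf||_\pi^2 \le (||M_1||+||M_2||)^2 + |\lambda(M_1(P^2))|+|\lambda(M_2(P^2))|$, from which the claim follows by $\sqrt{x^2+y}\le x+\sqrt{y}$. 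You instead pair the identity $P=M_1+M_2-P^*$ with $Pf$, obtaining a genuine quadratic inequality $||Pf||_\pi^2 \le a\,||Pf||_\pi + b$ that you then solve. The paper's route is marginally shorter (no quadratic to solve), while yours is more explicit about real versus complex parts and in fact yields a slightly smaller constant in front of the square-root terms before you discard it; either way the two arguments are essentially equivalent.
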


\begin{proof}
	By Lemma \ref{lem:M1M2prop}\ref{state:1}, we have $||(P+P^*)f||_{\pi}^2 = ||(M_1+M_2)f||_{\pi}^2$ where $f \in L^2_0(\pi)$. Rearranging the terms give
	\begin{align*}
		\langle Pf,Pf \rangle_{\pi} &= \langle M_1f,M_1f \rangle_{\pi} + \langle M_2f,M_2f \rangle_{\pi} + \langle M_1f,M_2f \rangle_{\pi} + \langle M_2f,M_1f \rangle_{\pi} \\
		&\quad - \langle P^*f,P^*f \rangle_{\pi} - \langle f,(P^*)^2f \rangle_{\pi} - \langle f,P^2f \rangle_{\pi} \\
		&\leq \langle M_1f,M_1f \rangle_{\pi} + \langle M_2f,M_2f \rangle_{\pi} + \langle M_1f,M_2f \rangle_{\pi} + \langle M_2f,M_1f \rangle_{\pi} \\
		&\quad - \langle f,M_1(P^2)f \rangle_{\pi} - \langle f,M_2(P^2)f \rangle_{\pi} ,
	\end{align*}
	where we used that $P^2 + (P^*)^2 = M_1(P^2) + M_2(P^2)$ by Lemma \ref{lem:M1M2prop}\ref{state:1} and $\langle P^*f,P^*f \rangle_{\pi} \geq 0$ in the inequality. Therefore, we have
	$$||Pf||_{\pi} \leq  (||M_1||_{L^2_0(\pi) \to L^2_0(\pi)} + ||M_2||_{L^2_0(\pi) \to L^2_0(\pi)}) ||f||_{\pi} + |\lambda(M_1(P^2))|^{1/2} + |\lambda(M_2(P^2))|^{1/2} .$$
	Result follows by taking supremum over all $f$ with $||f||_{\pi} \leq 1$ and $\E_{\pi} f = 0$.	
\end{proof}

\begin{proof}[Proof of Theorem \ref{thm:varbd}]
	Assume without loss of generality that $\E_{\pi}(f) = 0$ and $\E_{\pi}(f_i) = 0$ for $i=1,\ldots,n$. We first show \eqref{eq:varbd}. Since $X_0 \sim \pi$ and by Lemma \ref{lem:peskunpositive},
	$$\E_{\pi}(f(X_i)f(X_j)) = \langle f, P^{|j-i|} f \rangle_{\pi} \leq \langle f, M_1(P^{|j-i|}) f \rangle_{\pi} = \langle f, (M_1(P^{|j-i|}) - \pi) f \rangle_{\pi}.$$
	Summing up $j$ from $1$ to $n$ leads to
	\begin{align*}
		\E_{\pi}\left(f(X_i) \sum_{j=1}^n f(X_j) \right) &\leq \sum_{j=1}^n \langle f, (M_1(P^{|j-i|}) - \pi) f \rangle_{\pi} \\
		&\leq \E_{\pi}(f^2) \sum_{j=1}^n ||M_1(P^{|j-i|})||_{L^2_0(\pi) \to L^2_0(\pi)} \\
		&\leq V_f \left( |\mathcal{C}^c| + \sum_{j=1}^n (\beta^{MH})^{|j-i|} \right) \\
		&\leq V_f \left( |\mathcal{C}^c| +\dfrac{2}{\gamma^{MH}} \right).
	\end{align*}
	\eqref{eq:varbd} follows when we sum $i$ from $1$ to $n$. Next, to show \eqref{eq:varbdg}, we observe that
	\begin{align*}
		\E_{\pi}(f_i(X_i)f_j(X_j)) &= \langle f_i, P^{|j-i|} f_j \rangle_{\pi} \\
		&\leq \langle f_i, (M_1(P^{|j-i|}) - \pi) f_j \rangle_{\pi} \\
		&\leq ||f_i||_{\pi} ||f_j||_{\pi} ||M_1(P^{|j-i|})||_{L^2_0(\pi) \to L^2_0(\pi)} \\
		&\leq \dfrac{1}{2} (\E_{\pi}f_i^2 + \E_{\pi}f_j^2) ||M_1(P^{|j-i|})||_{L^2_0(\pi) \to L^2_0(\pi)}.
	\end{align*}
	\eqref{eq:varbdg} follows when we sum $i,j$ from $1$ to $n$, and $\sum_{j=1}^n ||M_1(P^{|j-i|})||_{L^2_0(\pi) \to L^2_0(\pi)} \leq |\mathcal{C}^c| +\dfrac{2}{\gamma^{MH}}$. Finally, we will show \eqref{eq:varbdas}. Following from the proof of Theorem $3.5$ and $3.7$ in \cite{Paulin15}, using the definition of $\sigma_{as}^2$, we have
	$$\bigg| \mathrm{Var}_{\pi} \left(\sum_{i=1}^n f(X_i) \right) - n \sigma_{as}^2 \bigg| = | \langle f, 2 (I - (P - \pi)^{n-1})(I - (P - \pi))^{-2}f \rangle_{\pi} | .$$
	Note that $||I - (P - \pi)^{n-1}||_{L^2(\pi) \to L^2(\pi)} \leq 2$, and by Lemma \ref{lem:Popnbd},
	\begin{align*}
		||(I - (P - \pi))^{-1}||_{L^2(\pi) \to L^2(\pi)} &\leq \sum_{k=0}^{\infty} ||(P - \pi)^{k}||_{L^2(\pi) \to L^2(\pi)} \\
		&\leq 1 + |\mathcal{C}^c| + \sum_{k=|\mathcal{C}^c| +1}^{\infty} ||P^{k}||_{L^2_0(\pi) \to L^2_0(\pi)} \\
		&\leq 1 + |\mathcal{C}^c| + 4 \sum_{k=|\mathcal{C}^c| +1}^{\infty} (\beta^{MH})^{k} \\
		&=  1 + |\mathcal{C}^c| + \dfrac{4 (\beta^{MH})^{|\mathcal{C}^c| +1}}{\gamma^{MH}}.
	\end{align*}
\end{proof}

\section{Metastability, conductance and Cheeger's inequality}\label{sec:metastable}

In this section, we aim at analyzing metastability, conductance, Cheeger's inequality and their relationships with the two MH kernels. We begin by briefly recalling these concepts.

\begin{definition}[Metastability of a set]\label{def:metastability}
	Let $A,B \in \mathcal{F}$ be measurable subsets of $\mathcal{X}$. Denote by
	$$Q(A,B) := \dfrac{1}{\pi(A)} \int_A p(x,B) \, \pi(dx) = \dfrac{\langle P \1_{A}, \1_{B} \rangle_{\pi}}{\langle \1_{A}, \1_{A} \rangle_{\pi}}\,,$$
	if $\pi(A) > 0$ and $0$ otherwise. $A$ is said to be \textit{metastable} (resp.~\textit{invariant}) if 
	$$Q(A,A) \approx 1 \, (\mathrm{resp}.~Q(A,A) = 1)\,.$$	
\end{definition}

%\begin{rk}
%	For a non-reversible chain with Markov kernel $P$, since $\langle P \1_{A}, \1_{B} \rangle_{\pi} = \langle \1_{A}, P^* \1_{B} \rangle_{\pi}$, $Q(A,B)$ of $P$ equals to $Q(A,B)$ of the reversible chain $(P+P^*)/2$.
%\end{rk}

\begin{rk}
	Denote $A^c$ to be the complement of $A \subset \mathcal{X}$, then $Q(A,A^c)$ is also known as the conductance of the set $A$. See Definition \ref{def:conductance} below.
\end{rk}

Note that metastability means ``almost invariant", in the sense that $Q(A,A)$ is close to $1$. In reality, we are more interested in measuring the metastability of an arbitrary partition of the state space $\mathcal{X}$, in which we state in the following:

\begin{definition}[Metastability of a partition]
	Suppose that $\mathcal{D} = \{A_1,\ldots,A_n\}$ is a partition of $\mathcal{X}$. The metastability of $\mathcal{D}$ is denoted by
	$$m(\mathcal{D}) := \sum_{i=1}^n Q(A_i,A_i)\,.$$
	$\mathcal{D}$ is said to be metastable if $m(\mathcal{D}) \approx n$.
\end{definition}

The next definition measures the ``leakage" of a set $A$ at time $t$, which is first introduced by \cite{D82, S84}.

\begin{definition}[Leakage]
	The leakage of a set $A \in \mathcal{F}$ at time $t$ is denoted by
	$$l(A,t) := \dfrac{||\1_{A} - P^t \1_{A}||_{L^1(\pi)}}{2\pi(A)(1-\pi(A))}\,.$$
	This can be rewritten as
	$$l(A,t) = \int_{\mathcal{X}\backslash A} \left(P^t \dfrac{\1_A}{\pi(A)}\right)(x) \dfrac{\pi(dx)}{1-\pi(A)}\,,$$
	measuring the probability of $\1_A/\pi(A)$ being outside $A$ at time $t$.
\end{definition}

\begin{rk}
	Another related measure of bottleneckness is conductance. See Definition \ref{def:conductance} below.
\end{rk}

Next, we introduce a key assumption (see e.g. \cite{D82, S84, HS06}) that will be used in subsequent sections:

\begin{assumption}\label{assumption}
	Suppose that $Q : L^2(\pi) \rightarrow L^2(\pi)$ is a self-adjoint Markov kernel with $n$  eigenvalues denoted by
	$$1 = \lambda_1 > \lambda_2 \geq \ldots \geq \lambda_n\,.$$
	In addition, the spectrum $\sigma(Q)$ of $Q$ is contained in
	$$\sigma(Q) \subset [a,b] \cup \{\lambda_n,\ldots,\lambda_1\}\,,$$
	where $-1 < a \leq b < \lambda_n$. In this sense, the eigenvalues $\lambda_1,\ldots,\lambda_n$ are called \textit{dominant} as they are larger than $b$.
\end{assumption}

If $Q$ is a finite Markov chain, or if $Q$ is geometrically ergodic, or if $Q$ is $V$-uniformly ergodic, then it can be shown that $Q$ satisfies Assumption \ref{assumption}, see e.g. \cite{HS06, SS13}. Under Assumption \ref{assumption} with $n = 2$, if the eigenvalue $\lambda_2$ is ``close" to $1$, then this is known as ``almost degeneracy", which allows us to partition $\mathcal{X}$ into two metastable regions. This has been the subject of investigation in \cite{D82, S84}.

%\subsection{Conductance and Cheeger's inequality}\label{sec:conductance}

Next, we provide a quick review on the notion of conductance and Cheeger's inequality, which are first introduced to the Markov chain literature in \cite{DS91}.

\begin{definition}[Conductance]\label{def:conductance}
	The conductance of the set $A$ is
	$$\Phi(A) := Q(A,A^c) = 1 - Q(A,A)\,,$$
	where $Q(A,A^c)$ is defined in Definition \ref{def:metastability}. The conductance of the chain is defined to be
	$$\Phi_*(2) := \min_{A \neq \emptyset,\mathcal{X}} \max \{\Phi(A),\Phi(A^c)\} = \min_{A: 0 < \pi(A) \leq 1/2} \Phi(A)\,.$$
	For $k \in \mathbb{N}$, let $\mathcal{D}_k = \{A_1,\ldots,A_k\}$ be the set of $k$-uples of disjoint and $\pi$-non-negligible subsets of $\mathcal{X}$. Then the $k$-way expansion is 
	$$\Phi_*(k) := \min_{(A_1,\ldots,A_k) \in \mathcal{D}_k} \max_{i \in \llbracket k \rrbracket} \Phi(A_i)\,.$$
\end{definition}

Next, we recall the Cheeger's inequality and its higher-order variants, which provide a two-sided bound in the spectral gap in terms of the $k$-way expansion, see e.g. \cite{LGT12} and \cite[Proposition $5$]{M15} .

\begin{theorem}[Higher-order Cheeger's inequality]\label{thm:revCheeger}
	Suppose that $P$ is the Markov kernel of a discrete-time reversible finite Markov chain with eigenvalues $1 = \lambda_1 \geq \ldots \geq \lambda_n$. For $k \in \llbracket n \rrbracket$,
	$$\dfrac{1-\lambda_k}{2} \leq \Phi_*(k) \leq O(k^4) \sqrt{1-\lambda_k}\,.$$
\end{theorem}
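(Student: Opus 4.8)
The plan is to treat Theorem~\ref{thm:revCheeger} as the classical reversible higher--order Cheeger inequality: I would prove the (elementary) lower bound from scratch and import the (deep) upper bound from \cite{LGT12}, recording it in the cleaner $O(k^4)$ form of \cite[Proposition~5]{M15}. For the lower bound, the starting point is the identity $\langle P\1_{A},\1_{A}\rangle_{\pi}=\pi(A)\,Q(A,A)$ (immediate from Definition~\ref{def:metastability}), which gives the Dirichlet energy of an indicator, $\langle (I-P)\1_{A},\1_{A}\rangle_{\pi}=\pi(A)\bigl(1-Q(A,A)\bigr)=\pi(A)\,\Phi(A)$, using $\Phi(A)=1-Q(A,A)$ from Definition~\ref{def:conductance}.

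To finish the lower bound, fix any $k$-tuple $(A_{1},\dots,A_{k})\in\mathcal D_{k}$ of disjoint $\pi$-non-negligible sets and let $V:=\mathrm{span}\{\1_{A_{1}},\dots,\1_{A_{k}}\}\subseteq L^{2}(\pi)$; disjointness of the supports makes $\dim V=k$. Since $P$ is reversible, $I-P$ is self-adjoint with real eigenvalues $0=1-\lambda_{1}\le\cdots\le 1-\lambda_{n}$, so the Courant--Fischer min--max principle produces a nonzero $f=\sum_{i=1}^{k}c_{i}\1_{A_{i}}\in V$ with $\langle (I-P)f,f\rangle_{\pi}\ge (1-\lambda_{k})\,\|f\|_{\pi}^{2}=(1-\lambda_{k})\sum_{i}c_{i}^{2}\pi(A_{i})$. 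On the other hand, writing the Dirichlet energy in the standard sum-of-squared-differences form $\langle (I-P)f,f\rangle_{\pi}=\tfrac12\sum_{x,y}\pi(x)P(x,y)\bigl(f(x)-f(y)\bigr)^{2}$, using $(c_{i}-c_{j})^{2}\le 2(c_{i}^{2}+c_{j}^{2})$, disjointness of the $A_{i}$, and accounting for edges leaving $\bigcup_{i}A_{i}$, one obtains $\langle (I-P)f,f\rangle_{\pi}\le 2\sum_{i}c_{i}^{2}\pi(A_{i})\,\Phi(A_{i})\le 2\bigl(\max_{i}\Phi(A_{i})\bigr)\sum_{i}c_{i}^{2}\pi(A_{i})$. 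Comparing the two estimates gives $\max_{i}\Phi(A_{i})\ge (1-\lambda_{k})/2$, and taking the infimum over $\mathcal D_{k}$ yields $\Phi_{*}(k)\ge (1-\lambda_{k})/2$.

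For the upper bound I would follow \cite{LGT12}. Let $\1=f_{1},f_{2},\dots,f_{k}$ be $L^{2}(\pi)$-orthonormal eigenfunctions of $P$ for $\lambda_{1}\ge\cdots\ge\lambda_{k}$ and form the spectral embedding $F:\mathcal X\to\mathbb R^{k}$, $F(x)=(f_{1}(x),\dots,f_{k}(x))$ (one projects out the constant coordinate and normalises onto the unit sphere). The total Dirichlet energy of the embedding is controlled because $\sum_{i=1}^{k}\langle (I-P)f_{i},f_{i}\rangle_{\pi}=\sum_{i=1}^{k}(1-\lambda_{i})\le k(1-\lambda_{k})$. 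The substantive step is then a purely geometric randomised \emph{rounding}: from a low-energy embedding one extracts $k$ mutually disjoint $\pi$-non-negligible ``localised'' regions, each concentrating a constant fraction of its mass in a small Euclidean ball, and a random threshold/sweep-cut inside each region produces a set of conductance $O(\mathrm{poly}(k))\sqrt{1-\lambda_{k}}$; the worst of the $k$ sets gives $\Phi_{*}(k)\le O(k^{4})\sqrt{1-\lambda_{k}}$. This rounding argument (localisation plus padded random partitions) is the only hard point, and rather than reproduce it I would cite \cite{LGT12} and \cite{M15}. I would also note that reversibility enters precisely through self-adjointness of $P$ — giving real spectrum, the spectral theorem, and the Courant--Fischer characterisation used above — and that this reversible statement is exactly what the subsequent sections transfer to the non-reversible $P$ by comparing the Dirichlet forms of $P$, $M_{1}$ and $M_{2}$ via the Peskun ordering of Lemma~\ref{lem:peskunpositive}.
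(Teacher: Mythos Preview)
The paper does not prove Theorem~\ref{thm:revCheeger}: it is recorded as a known background result with a citation to \cite{LGT12} and \cite[Proposition~5]{M15}, and no argument is given. Your proposal therefore goes beyond what the paper does. The lower-bound argument you sketch (Courant--Fischer on the $k$-dimensional span of the indicators, followed by the Dirichlet-form bound $\langle (I-P)f,f\rangle_{\pi}\le 2\sum_{i}c_{i}^{2}\pi(A_{i})\Phi(A_{i})$ via $(c_{i}-c_{j})^{2}\le 2(c_{i}^{2}+c_{j}^{2})$ and reversibility) is correct, and for the upper bound you rightly defer to \cite{LGT12} and \cite{M15}, which is exactly the paper's stance.
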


\subsection{Main results}

In this section, we demonstrate that, by means of comparison (i.e. Peskun's ordering as in Lemma \ref{lem:peskunpositive}), that existing results on metastability, leakage and Cheeger's inequality can be readily extended to the non-reversible case. We  first give spectral bounds on the metastability of partition, in terms of spectral objects associated with the first and second MH kernel $M_1$ and $M_2$.

\begin{theorem}[Metastability]\label{thm:mainMH}
	Suppose that $P$ is the Markov kernel of a non-reversible Markov chain on $\mathcal{X}$, with the first and second MH kernel denoted by $M_1$ and $M_2$ respectively. In addition, for $i = 1,2$, $M_i$ satisfies Assumption \ref{assumption} with dominant eigenvalues-eigenvectors denoted by $(\lambda_j^{(i)}, \phi_j^{(i)})_{j=1}^n$. For any partition $\mathcal{D} = \{A_1,\ldots,A_n\}$, the metastability of $\mathcal{D}$ is bounded by 
	\begin{align}\label{eq:mainMH}
	1 + \sum_{j=2}^n \rho_j \lambda_j^{(2)} + c \leq m(\mathcal{D}) \leq 
	1 + \sum_{j=2}^n \lambda_j^{(1)}\,,
	\end{align}
	where $\Gamma$ is the orthogonal projection from $L^2(\pi) \to \mathrm{Span}\{\1_{A_1},\ldots,\1_{A_n}\}$, $\rho_j := ||\Gamma \phi_j^{(2)}||_{\pi}^2 \in [0,1]$ for $j = 2,\ldots,n$, and 
	$$c := a \left(\sum_{j=2}^n 1-\rho_j \right)\,,$$
	with $a$ being defined in Assumption \ref{assumption} for $M_2$.
\end{theorem}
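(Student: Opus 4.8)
The plan is to exploit the Peskun ordering from Lemma \ref{lem:peskunpositive} together with the variational characterization of metastability. The starting point is the identity
$$m(\mathcal{D}) = \sum_{i=1}^n Q(A_i,A_i) = \sum_{i=1}^n \dfrac{\langle P \1_{A_i}, \1_{A_i} \rangle_{\pi}}{\pi(A_i)} = \sum_{i=1}^n \langle P \psi_i, \psi_i \rangle_{\pi},$$
where $\psi_i := \1_{A_i}/\sqrt{\pi(A_i)}$ forms an orthonormal system in $L^2(\pi)$ whose span is $\mathrm{Span}\{\1_{A_1},\ldots,\1_{A_n}\}$. Thus $m(\mathcal{D}) = \mathrm{Tr}(\Gamma P \Gamma)$, the trace of $P$ compressed to this $n$-dimensional subspace. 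Since $\langle M_2 f, f \rangle_{\pi} \leq \langle P f, f\rangle_{\pi} \leq \langle M_1 f, f \rangle_{\pi}$ for all $f$ by Lemma \ref{lem:peskunpositive}, summing over $f = \psi_i$ gives the sandwich
$$\mathrm{Tr}(\Gamma M_2 \Gamma) \leq m(\mathcal{D}) \leq \mathrm{Tr}(\Gamma M_1 \Gamma).$$
So the whole problem reduces to bounding the two compressed traces using Assumption \ref{assumption} on $M_1$ and $M_2$.

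For the upper bound, I would use that $M_1$ satisfies Assumption \ref{assumption}: its spectrum lies in $[a_1,b_1] \cup \{\lambda_1^{(1)},\ldots,\lambda_n^{(1)}\}$ with $\lambda_1^{(1)} = 1 > \lambda_2^{(1)} \geq \cdots$ and everything in $[a_1,b_1]$ below $\lambda_n^{(1)}$. Writing the spectral decomposition $M_1 = \sum_{j} \lambda_j^{(1)} \phi_j^{(1)} \otimes \phi_j^{(1)} + (\text{part supported on } [a_1,b_1])$, the compressed trace is $\mathrm{Tr}(\Gamma M_1 \Gamma) = \sum_j \lambda_j^{(1)} \|\Gamma \phi_j^{(1)}\|_\pi^2 + (\text{nonpositive-ish remainder})$. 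The constant function $\1 = \phi_1^{(1)}$ lies in the span of the $\1_{A_i}$, so $\|\Gamma \phi_1^{(1)}\|_\pi^2 = 1$, contributing the leading $1$. For the remaining terms, since each $\|\Gamma\phi_j^{(1)}\|_\pi^2 \in [0,1]$ and $\lambda_j^{(1)}$ can be negative, one bounds $\sum_{j\geq 2}\lambda_j^{(1)}\|\Gamma\phi_j^{(1)}\|_\pi^2 \leq \sum_{j\geq 2}(\lambda_j^{(1)})_+ \leq \sum_{j\geq 2}\lambda_j^{(1)}$ only if the $\lambda_j^{(1)}$ are nonnegative — more carefully, one uses that $\Gamma$ is a rank-$n$ projection and $\|\Gamma\|_{HS}^2 = n$, so $\sum_j \|\Gamma\phi_j^{(1)}\|_\pi^2 = n$ over a full orthonormal basis; combined with the ordering of eigenvalues this yields $\mathrm{Tr}(\Gamma M_1 \Gamma) \leq \lambda_1^{(1)} + \cdots + \lambda_n^{(1)} = 1 + \sum_{j=2}^n \lambda_j^{(1)}$ (the compressed trace of a self-adjoint operator is at most the sum of its $n$ largest eigenvalues, by Ky Fan's maximum principle, and the bulk spectrum below $\lambda_n^{(1)}$ only helps). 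That gives the right-hand inequality cleanly.

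For the lower bound I would run the analogous argument with $M_2$, but now keeping track of the bulk $[a_2,b_2]$ explicitly since there is no Ky-Fan-type lower bound that throws it away. Decompose $\mathrm{Tr}(\Gamma M_2 \Gamma) = \sum_{j=1}^n \lambda_j^{(2)}\|\Gamma\phi_j^{(2)}\|_\pi^2 + \int_{[a_2,b_2]} \mu \, d\langle \Gamma \mathcal{E}_2(\mu)\Gamma\rangle$. The $j=1$ term again gives exactly $1$ (constant function, $\rho_1 = 1$). Writing $\rho_j := \|\Gamma\phi_j^{(2)}\|_\pi^2$, the discrete part contributes $1 + \sum_{j=2}^n \rho_j \lambda_j^{(2)}$. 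For the continuous part, its total mass is $\mathrm{Tr}(\Gamma \mathcal{E}_2([a_2,b_2]) \Gamma) = n - \sum_{j=1}^n \rho_j = \sum_{j=2}^n (1-\rho_j)$ (using $\rho_1=1$ and that the $\phi_j^{(2)}$ together with the bulk exhaust $L^2(\pi)$), and since $\mu \geq a_2 =: a$ on this set, the continuous part is at least $a \sum_{j=2}^n (1-\rho_j) = c$. Adding up yields $m(\mathcal{D}) \geq 1 + \sum_{j=2}^n \rho_j \lambda_j^{(2)} + c$, which is exactly \eqref{eq:mainMH}.

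The main obstacle is bookkeeping around the bulk spectrum and making sure the trace identities on the compressed operator are rigorous — in particular justifying $\sum_j \|\Gamma \phi_j^{(2)}\|_\pi^2 + \mathrm{Tr}(\Gamma\mathcal{E}_2([a_2,b_2])\Gamma) = n$ from the resolution of the identity $\sum_{j} \phi_j^{(2)}\otimes\phi_j^{(2)} + \mathcal{E}_2([a_2,b_2]) = I$, and invoking Ky Fan's principle (or, in finite dimensions, the Cauchy interlacing/Schur-Horn majorization argument) for the upper bound. None of this is deep, but the asymmetry between the two bounds — a clean max-principle bound on one side versus an explicit accounting of the bulk with the constant $a$ on the other — is the delicate point and explains the appearance of $\rho_j$ and $c$ only in the lower bound.
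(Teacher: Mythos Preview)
Your proposal is correct and follows essentially the same route as the paper: Peskun ordering sandwiches $m(\mathcal{D})$ between $\mathrm{Tr}(\Gamma M_2 \Gamma)$ and $\mathrm{Tr}(\Gamma M_1 \Gamma)$, and then the spectral structure from Assumption~\ref{assumption} is used on each side. The only cosmetic differences are that the paper dispatches the upper bound by citing \cite[Theorem~2]{HS06} rather than invoking Ky~Fan directly, and handles the lower bound via the shift $(M_2 - aI) \succeq 0$ on $\Pi^{\perp}$ rather than by integrating $\mu \geq a$ over the bulk spectral measure --- these are equivalent formulations of the same computation.
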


\begin{rk}
	This result should be compared with \cite[Theorem $2$]{HS06} and \cite[Theorem $5.8$]{SS13}, in which we retrieve the corresponding results since $P = M_1 = M_2$ and hence $\lambda_j = \lambda_j^{(1)} = \lambda_j^{(2)}$ in the reversible case. 
\end{rk}

The next theorem gives spectral bounds on leakage:

\begin{theorem}[Leakage]\label{thm:mainMH2}
	Suppose that $P$ is the Markov kernel of a non-reversible Markov chain on $\mathcal{X}$, with the first and second MH kernel denoted by $M_1$ and $M_2$ respectively. In addition, for $i = 1,2$ and $t \in \mathbb{N}$, $M_i(P^t)$ satisfies Assumption \ref{assumption} with $n=2$ and dominant eigenvalues-eigenvectors denoted by $(\lambda_j^{(i)}(P^t), \phi_j^{(i)}(P^t))_{j=1}^2$. If $\{A,B\}$ is a partition of $\mathcal{X}$, then for all $t \in \mathbb{N}$, 
	\begin{align*}
	1 - \lambda_2^{(1)}(P^t) \leq l(A,t) \leq 1 - \gamma_A^2(M_2(P^t)) \lambda_2^{(2)}(P^t)\,,
	\end{align*}
	where $\gamma_A(M_i(P^t)) = \langle \psi_A, \phi_2^{(i)}(P^t) \rangle_{\pi}$ for $i = 1,2$ and 
	$$\psi_A = \sqrt{\dfrac{\pi(B)}{\pi(A)}} \1_A - \sqrt{\dfrac{\pi(A)}{\pi(B)}} \1_B\,.$$
\end{theorem}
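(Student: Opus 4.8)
The plan is to collapse the leakage onto a single quadratic form of $P^t$, to sandwich that form between the corresponding forms of $M_1(P^t)$ and $M_2(P^t)$ by Peskun ordering, and then to read off the two bounds from Assumption \ref{assumption}. First I would record the elementary facts about $\psi_A$: a direct computation gives $||\psi_A||_\pi^2 = \pi(B) + \pi(A) = 1$ and $\E_\pi \psi_A = 0$, so $\psi_A$ is a unit vector of $L^2_0(\pi)$; moreover $\1_A = \pi(A)\,\1_{\mathcal{X}} + \sqrt{\pi(A)\pi(B)}\,\psi_A$ and hence $\1_B = \pi(B)\,\1_{\mathcal{X}} - \sqrt{\pi(A)\pi(B)}\,\psi_A$, where $\1_{\mathcal{X}}$ is the constant function equal to $1$. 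Using $P^t\1_{\mathcal{X}} = \1_{\mathcal{X}}$ and $\langle \psi_A,\1_{\mathcal{X}}\rangle_\pi = \langle P^t\psi_A,\1_{\mathcal{X}}\rangle_\pi = 0$, expanding $\langle P^t\1_A,\1_B\rangle_\pi$ in the orthogonal pair $\{\1_{\mathcal{X}},\psi_A\}$ gives $\langle P^t\1_A,\1_B\rangle_\pi = \pi(A)\pi(B)\,(1 - \langle P^t\psi_A,\psi_A\rangle_\pi)$. Separately, since $\1_A - P^t\1_A \ge 0$ on $A$ and $\le 0$ on $B$, stationarity ($\int_{\mathcal{X}} P^t\1_A\,d\pi = \pi(A)$) yields $||\1_A - P^t\1_A||_{L^1(\pi)} = 2\int_B P^t\1_A\,d\pi = 2\langle P^t\1_A,\1_B\rangle_\pi$. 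Dividing by $2\pi(A)(1-\pi(A)) = 2\pi(A)\pi(B)$ produces the exact identity $l(A,t) = 1 - \langle P^t\psi_A,\psi_A\rangle_\pi$.

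Next I would invoke Peskun ordering. Since the construction of the two MH kernels applies verbatim with $P$ replaced by the Markov kernel $P^t$, Corollary \ref{cor:specord} applied to $P^t$ (equivalently, the positive-semidefiniteness of $P^t - M_2(P^t)$ and $M_1(P^t) - P^t$ in Lemma \ref{lem:peskunpositive}) gives $\langle M_2(P^t)\psi_A,\psi_A\rangle_\pi \le \langle P^t\psi_A,\psi_A\rangle_\pi \le \langle M_1(P^t)\psi_A,\psi_A\rangle_\pi$. Substituting this into the identity above reduces the theorem to the two quadratic-form estimates $\langle M_1(P^t)\psi_A,\psi_A\rangle_\pi \le \lambda_2^{(1)}(P^t)$ and $\langle M_2(P^t)\psi_A,\psi_A\rangle_\pi \ge \gamma_A^2(M_2(P^t))\,\lambda_2^{(2)}(P^t)$.

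For these I would use Assumption \ref{assumption} with $n = 2$. Both $M_1(P^t)$ and $M_2(P^t)$ are self-adjoint on $L^2(\pi)$ and fix $\1_{\mathcal{X}}$ (the latter because $M_2(P^t)(x,\mathcal{X}) = 1$ by Lemma \ref{lem:M1M2prop}\ref{state:1}), so their eigenvalue $1$ is simple with eigenvector $\1_{\mathcal{X}}$, to which $\psi_A$ is orthogonal. For $M_1(P^t)$: since $||\psi_A||_\pi = 1$ and $\psi_A$ is orthogonal to the top eigenvector, the spectral theorem together with Assumption \ref{assumption} forces $\langle M_1(P^t)\psi_A,\psi_A\rangle_\pi \le \sup\!\big(\sigma(M_1(P^t))\setminus\{1\}\big) = \lambda_2^{(1)}(P^t)$, giving the lower bound $l(A,t) \ge 1 - \lambda_2^{(1)}(P^t)$. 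For $M_2(P^t)$: write $\psi_A = \gamma_A(M_2(P^t))\,\phi_2^{(2)}(P^t) + w$ with $w$ orthogonal to both $\phi_1^{(2)}(P^t) = \1_{\mathcal{X}}$ and $\phi_2^{(2)}(P^t)$, so that $w$ lies in the spectral subspace of $M_2(P^t)$ attached to its non-dominant part $[a,b]$ and $||w||_\pi^2 = 1 - \gamma_A^2(M_2(P^t))$; using self-adjointness to kill the cross terms, $\langle M_2(P^t)\psi_A,\psi_A\rangle_\pi = \gamma_A^2(M_2(P^t))\,\lambda_2^{(2)}(P^t) + \langle M_2(P^t)w,w\rangle_\pi \ge \gamma_A^2(M_2(P^t))\,\lambda_2^{(2)}(P^t)$, which yields the upper bound $l(A,t) \le 1 - \gamma_A^2(M_2(P^t))\,\lambda_2^{(2)}(P^t)$. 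Keeping $\langle M_2(P^t)w,w\rangle_\pi \ge a\,(1-\gamma_A^2(M_2(P^t)))$, with $a$ the left endpoint of Assumption \ref{assumption} for $M_2(P^t)$, gives the slightly sharper inequality with an extra additive constant, exactly as in Theorem \ref{thm:mainMH}.

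I expect the main obstacle to be the reduction in the first paragraph: one must treat the $L^1(\pi)$ norm by splitting over $A$ and $B$, apply stationarity in the right places, and carry the decomposition $\1_A = \pi(A)\,\1_{\mathcal{X}} + \sqrt{\pi(A)\pi(B)}\,\psi_A$ through consistently; once the identity $l(A,t) = 1 - \langle P^t\psi_A,\psi_A\rangle_\pi$ and the Peskun sandwich are in place, the two spectral estimates are routine. A secondary point to keep in mind is that $M_2(P^t)$ is in general not a Markov kernel but only a self-adjoint operator fixing $\1_{\mathcal{X}}$, so the Peskun inequality used is the generalized version already established in Lemma \ref{lem:peskunpositive}, and the eigen-decomposition used above is the one furnished by the adapted Assumption \ref{assumption} for such kernels.
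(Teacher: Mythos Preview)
Your proposal is correct and follows essentially the same route as the paper: the leakage identity $l(A,t)=1-\langle P^t\psi_A,\psi_A\rangle_\pi$ (which the paper imports from \cite{S84}, Lemma~4, while you derive it directly), then the Peskun sandwich from Lemma~\ref{lem:peskunpositive}, then the Rayleigh bound for $M_1(P^t)$ and the orthogonal decomposition $\psi_A=\gamma_A\phi_2^{(2)}(P^t)+w$ for $M_2(P^t)$. The paper phrases the upper-bound step as $\langle(I-M_2(P^t))\Pi^\perp_2(P^t)\psi_A,\psi_A\rangle_\pi\le\langle\Pi^\perp_2(P^t)\psi_A,\psi_A\rangle_\pi$ and attributes it to Cauchy--Schwarz, which is exactly your drop of $\langle M_2(P^t)w,w\rangle_\pi\ge 0$.
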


\begin{rk}
	This result should be compared with \cite[Theorem $5$]{S84}, in which we retrieve the corresponding upper bound in the reversible case since $P = M_1 = M_2$ and hence $\lambda_j = \lambda_j^{(1)} = \lambda_j^{(2)}$.
\end{rk}

Finally, we give a version of Cheeger's inequality in bounding the $k$-way expansion, in terms of the eigenvalues of the two Metropolis kernels. This result should be compared with Theorem \ref{thm:revCheeger}.

\begin{theorem}[Cheeger's inequality]\label{thm:Cheeger}
	Suppose that $P$ is the Markov kernel of a non-reversible Markov chain on a finite state space $\mathcal{X}$, with the first and second MH kernel denoted by $M_i$ and eigenvalues $(\lambda_j^{(i)})_{j=1}^n$ for $i=1,2$. For $k \in \llbracket n \rrbracket$,
	\begin{align}\label{eq:Cheeger}
	\dfrac{1-\lambda_k^{(1)}}{2} \leq \Phi_*(k) \leq O(k^4) \sqrt{1-\lambda_k^{(2)}}\,.
	\end{align}
\end{theorem}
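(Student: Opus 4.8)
The plan is to pass from $P$ to its two self-adjoint companions $M_1$ and $M_2$ by comparing Dirichlet forms (via the Peskun ordering of Lemma~\ref{lem:peskunpositive}), and then to invoke the reversible higher-order Cheeger inequality of Theorem~\ref{thm:revCheeger} once for $M_1$ (for the lower bound) and once for $M_2$ (for the upper bound). \textbf{Step 1: conductance as a Dirichlet form.} By Definition~\ref{def:metastability} and Definition~\ref{def:conductance}, for $A$ with $0<\pi(A)<1$ one has $\Phi(A)=1-Q(A,A)=\langle(I-P)\1_A,\1_A\rangle_\pi/\pi(A)$. Running the same computation with $M_1$ and with $M_2$ in place of $P$ produces the functionals
$$\Phi^{(i)}(A):=1-\frac{\langle M_i\1_A,\1_A\rangle_\pi}{\pi(A)}=\frac{\langle(I-M_i)\1_A,\1_A\rangle_\pi}{\pi(A)},\qquad i=1,2,$$
and their associated $k$-way expansions $\Phi^{(i)}_*(k)$. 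For $M_1$ this is literally the conductance of the reversible Markov chain $M_1$; for $M_2$ (which need not be Markov) it is a priori only a formal expression, but it is non-negative: by Lemma~\ref{lem:M1M2prop}\ref{state:1} we have $M_2\1=\1$, so $1\in\sigma(M_2)$, while Corollary~\ref{cor:specord} together with $\Lambda(M_1)\le1$ forces every eigenvalue of $M_2$ to be $\le1$; hence $\lambda^{(2)}_1=1$, the operator $I-M_2$ is positive semidefinite, and $\Phi^{(2)}(A)\ge0$.

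\textbf{Step 2: Peskun comparison.} Lemma~\ref{lem:peskunpositive} gives $\langle M_2f,f\rangle_\pi\le\langle Pf,f\rangle_\pi\le\langle M_1f,f\rangle_\pi$ for every real $f\in L^2(\pi)$ (see the proof of Corollary~\ref{cor:specord}). Taking $f=\1_A$ and dividing by $\pi(A)$ yields $\Phi^{(1)}(A)\le\Phi(A)\le\Phi^{(2)}(A)$ for every admissible $A$; maximizing over any $k$-uple of disjoint non-negligible sets and then minimizing over $\mathcal{D}_k$ gives $\Phi^{(1)}_*(k)\le\Phi_*(k)\le\Phi^{(2)}_*(k)$.

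\textbf{Step 3: apply Theorem~\ref{thm:revCheeger} to $M_1$ and $M_2$.} Since $M_1$ is a reversible finite Markov chain with eigenvalues $1=\lambda^{(1)}_1\ge\cdots\ge\lambda^{(1)}_n$, the lower bound in Theorem~\ref{thm:revCheeger} gives $\tfrac12(1-\lambda^{(1)}_k)\le\Phi^{(1)}_*(k)$, which with Step~2 yields the asserted lower bound $\tfrac12(1-\lambda^{(1)}_k)\le\Phi_*(k)$. For the upper bound I would apply the upper bound in Theorem~\ref{thm:revCheeger} to $M_2$: although $M_2$ may fail to be a Markov kernel, it is self-adjoint on $L^2(\pi)$ with $M_2\1=\1$, its off-diagonal entries $m_2(x,y)=\max\{p(x,y),p^*(x,y)\}\ge0$ are symmetric with respect to $\pi$, and $I-M_2\succeq0$ by Step~1, so
$$\langle(I-M_2)f,f\rangle_\pi=\tfrac12\sum_{x\ne y}\pi(x)\,m_2(x,y)\bigl(f(x)-f(y)\bigr)^2$$
is a bona fide non-negative Dirichlet form on the probability space $(\mathcal{X},\pi)$, whose normalized-Laplacian eigenvalues are $1-\lambda^{(2)}_j$ and whose Rayleigh-quotient expansion is precisely $\Phi^{(2)}_*(k)$. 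The proof of the higher-order Cheeger inequality — spectral embedding by the top $k$ eigenfunctions followed by a geometric localization of the embedding — uses only this Dirichlet form, the measure $\pi$, and the $\pi$-orthonormal eigenfunctions $\phi^{(2)}_j$ with $\langle(I-M_2)\phi^{(2)}_j,\phi^{(2)}_j\rangle_\pi=1-\lambda^{(2)}_j$; it never appeals to non-negativity of the diagonal of $M_2$. Hence it carries over verbatim and gives $\Phi^{(2)}_*(k)\le O(k^4)\sqrt{1-\lambda^{(2)}_k}$, which with Step~2 gives $\Phi_*(k)\le O(k^4)\sqrt{1-\lambda^{(2)}_k}$.

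\textbf{Main obstacle.} Steps~1 and~2 are routine rearrangements plus the already-established Peskun ordering. The delicate point is the last one: justifying that the upper half of Theorem~\ref{thm:revCheeger} applies to $M_2$ even though $M_2$ need not be a genuine Markov chain (it may have negative holding ``probabilities''). The clean way to do this is to note that the construction behind the higher-order Cheeger upper bound only ever manipulates the edge-weighted Dirichlet form and the spectral decomposition of the associated self-adjoint operator, so negative diagonal entries are harmless; equivalently, one restates Theorem~\ref{thm:revCheeger} in terms of an arbitrary non-negative symmetric Dirichlet form normalized by a probability measure, in which generality it is exactly what is needed here.
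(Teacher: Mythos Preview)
Your argument is correct and follows essentially the same route as the paper: sandwich $\Phi(A)$ between $\Phi^{(1)}(A)$ and $\Phi^{(2)}(A)$ via the Peskun ordering, then invoke the reversible higher-order Cheeger inequality on each side. The only difference is in how the non-Markovian nature of $M_2$ is handled for the upper bound. You argue from first principles that the Dirichlet form $\langle(I-M_2)f,f\rangle_\pi=\tfrac12\sum_{x\ne y}\pi(x)m_2(x,y)(f(x)-f(y))^2$ has non-negative edge weights and that the spectral-embedding proof of the upper Cheeger bound never touches the diagonal; the paper instead observes that $G:=M_2-I$ is a genuine Markov \emph{generator} (off-diagonal entries $m_2(x,y)\ge0$, rows sum to zero, diagonal $M_2(x,x)-1\le0$), so one may simply cite the continuous-time/generator version of the higher-order Cheeger inequality for $-G$, whose eigenvalues are exactly $1-\lambda_k^{(2)}$ and whose conductance coincides with $\Phi^{(2)}(A)$. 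Your route is self-contained but requires the reader to accept that the cited proof is diagonal-free; the paper's route is a one-line reduction to an existing black box.
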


\subsection{Proofs}\label{sec:proof}

\begin{proof}[Proof of Theorem \ref{thm:mainMH}]
	The key step is the Peskun ordering between $P, M_1, M_2$, which yields, for any $f \in L^2(\pi)$, the following inequalities:
	\begin{equation}\label{eq:Peskun}
	\langle M_2 f,f \rangle_{\pi} \leq \langle Pf,f\rangle_{\pi} \leq \langle M_1f,f\rangle_{\pi}\,.
	\end{equation}
	This allows us to link $m(\mathcal{D})$ to the eigenvalues of $M_1$ and $M_2$. More precisely, we first show the upper bound of \eqref{eq:mainMH}. Making use of the definition, we have
	\begin{align*}
	m(\mathcal{D}) = \sum_{i=1}^n \dfrac{\langle P \1_{A_i}, \1_{A_i} \rangle_{\pi}}{\langle \1_{A_i}, \1_{A_i} \rangle_{\pi}} \leq \sum_{i=1}^n \dfrac{\langle M_1 \1_{A_i}, \1_{A_i} \rangle_{\pi}}{\langle \1_{A_i}, \1_{A_i} \rangle_{\pi}} \leq  1 + \sum_{j=2}^n \lambda_j^{(1)}\,,
	\end{align*}
	where the first inequality follows from \eqref{eq:Peskun} with $f = \1_{A_i}$, and we use \cite[Theorem $2$]{HS06} in the second inequality since $M_1$ is a self-adjoint Markov kernel satisfying Assumption \ref{assumption}. Next, to show the lower bound, using \eqref{eq:Peskun} again, we arrive at
	\begin{align*}
	m(\mathcal{D}) \geq \sum_{i=1}^n \langle M_{2} \chi_{A_i}, \chi_{A_i} \rangle_{\pi}\,,
	\end{align*}
	where $\chi_{A_i} := \frac{\1_{A_i}}{\sqrt{\langle \1_{A_i}, \1_{A_i} \rangle_{\pi}}}$. The remaining part of the proof follows a similar argument as in \cite{HS06}. Denote the orthogonal projection by $\Pi : L^2(\pi) \rightarrow \mathrm{Span}\{\phi^{(2)}_1,\ldots,\phi^{(2)}_n\}$ and its orthogonal complement by $\Pi^{\perp} = I - \Pi$. Note that
	\begin{align*}
	\sum_{i=1}^n \langle M_{2} \chi_{A_i}, \chi_{A_i} \rangle_{\pi} &= \sum_{i=1}^n \langle (M_{2} -a I)(\Pi + \Pi^{\perp}) \chi_{A_i}, \chi_{A_i} \rangle_{\pi} + a \sum_{i=1}^n \langle \chi_{A_i}, \chi_{A_i} \rangle_{\pi} \\
	&= \sum_{i=1}^n \langle (M_{2} -a I)\Pi \chi_{A_i}, \Pi \chi_{A_i} \rangle_{\pi} + \sum_{i=1}^n \langle (M_{2} -a I)\Pi^{\perp} \chi_{A_i}, \Pi^{\perp} \chi_{A_i} \rangle_{\pi} + a n \\
	&\geq \sum_{i=1}^n \langle (M_{2} -a I)\Pi \chi_{A_i}, \Pi \chi_{A_i} \rangle_{\pi} + a n \\
	&= \sum_{i=1}^n \bigg\langle \sum_{k=1}^n (\lambda_k^{(2)} - a) \langle \chi_{A_i},\phi_k^{(2)} \rangle_{\pi} \phi_k^{(2)}, \sum_{k=1}^n \langle \chi_{A_i},\phi_k^{(2)} \rangle_{\pi} \phi_k^{(2)} \bigg\rangle_{\pi} + a n \\
	&= \sum_{i=1}^n \sum_{k=1}^n (\lambda_k^{(2)} - a) \langle \chi_{A_i},\phi_k^{(2)} \rangle_{\pi}^2 + a n = \sum_{k=1}^n (\lambda_k^2 - a) ||\Gamma \phi_k^{(2)}||_{\pi}^2 + a n = 1 + \sum_{j=2}^n \rho_j \lambda_j^{(2)} + c\,,
	\end{align*}
	where the inequality follows from the fact that $(M_2 - aI)$ is self-adjoint and positive semidefinite, the fourth equality comes from the fact that the set $\{\phi^{(2)}_1,\ldots,\phi^{(2)}_n\}$ is orthonormal, the fifth equality makes use of Parseval's identity, and we use $\lambda_1^{(2)} = 1, \phi_1^{(2)} = \1, ||\Gamma \phi_1^{(2)}||_{\pi}^2 = 1$ in the last equality. The desired result follows.	
\end{proof}

\begin{proof}[Proof of Theorem \ref{thm:mainMH2}]
	Similar to the proof of Theorem \ref{thm:mainMH}, the crux again lies at the appropriate use of \eqref{eq:Peskun}. First, by \cite[Lemma $4$]{S84} and \eqref{eq:Peskun}, we have 
	\begin{align*}
	\langle (I - M_1(P^t)) \psi_A, \psi_A \rangle_{\pi} \leq \langle (I - P^t) \psi_A, \psi_A \rangle_{\pi} = l(A,t) \leq
	\langle (I - M_2(P^t)) \psi_A, \psi_A \rangle_{\pi}\,,
	\end{align*}
	so it suffices to show that 
	\begin{align}
	\langle (I - M_2(P^t)) \psi_A, \psi_A \rangle_{\pi} &\leq 1 - \gamma_A^2(M_2(P^t)) \lambda_2^{(2)}(P^t)\,, \label{eq:1} \\
	\langle (I - M_1(P^t)) \psi_A, \psi_A \rangle_{\pi} &\geq 1 - \lambda_2^{(1)}(P^t)\,. \label{eq:2}
	\end{align}
	The rest of the proof is similar to that of \cite[Theorem $5$]{S84}. For $i = 1,2$ and $t \in \mathbb{N}$, denote by $\Pi_i(P^t)$ to be the orthogonal projection $\Pi_i(P^t) : L^2(\pi) \rightarrow \mathrm{Span}\{\phi^{(i)}_1(P^t),\phi^{(i)}_2(P^t)\}$ and its orthogonal complement by $\Pi^{\perp}_i(P^t) = I - \Pi_i(P^t)$. Since $\psi_A$ is orthogonal to $\phi_1^{(i)}(P^t) = \1$, we have
	\begin{align}\label{eq:3}
	\Pi_i(P^t) \psi_A = \langle \psi_A,\phi_2^{(i)}(P^t) \rangle_{\pi} \phi_2^{(i)}(P^t) = \gamma_A(M_i(P^t)) \phi_2^{(i)}(P^t)\,.
	\end{align}
	We proceed to show \eqref{eq:1}. Note that
	\begin{align*}
	\langle (I - M_2(P^t)) \psi_A, \psi_A \rangle_{\pi} &= \langle (I - M_2(P^t))(\Pi_2(P^t)+\Pi^{\perp}_2(P^t)) \psi_A, \psi_A \rangle_{\pi} \\
	&= \gamma_A^2(M_2(P^t))(1-\lambda_2^{(2)}(P^t)) + \langle (I - M_2(P^t))\Pi^{\perp}_2(P^t) \psi_A, \psi_A \rangle_{\pi} \\
	&\leq \gamma_A^2(M_2(P^t))(1-\lambda_2^{(2)}(P^t)) + \langle \Pi^{\perp}_2(P^t) \psi_A, \psi_A \rangle_{\pi} \\
	&= \gamma_A^2(M_2(P^t))(1-\lambda_2^{(2)}(P^t)) + \langle (I - \Pi_2(P^t)) \psi_A, \psi_A \rangle_{\pi} \\
	&= 1 - \gamma_A^2(M_2(P^t)) \lambda_2^{(2)}(P^t)\,,
	\end{align*}
	where the second equality follows from \eqref{eq:3} and the inequality follows from Cauchy-Schwartz inequality. Finally, we show \eqref{eq:2}. Using the Rayleigh quotient lower bound on the self-adjoint kernel $I - M_1(P^t)$ yields
	\begin{align*}
	\langle (I - M_1(P^t)) \psi_A, \psi_A \rangle_{\pi} &\geq 1 - \lambda_2^{(1)}(P^t)\,.
	%		= \gamma_A^2(M_1(P^t))(1-\lambda_2^1(P^t)) + \langle (I - M_1(P^t))\Pi^{\perp}_1(P^t) \psi_A, \psi_A \rangle_{\pi} \\
	%		&= \gamma_A^2(M_1(P^t))(1-\lambda_2^1(P^t)) + \langle (I - M_1(P^t)) \psi_A, \psi_A \rangle_{\pi} - \langle (I - M_1(P^t))\Pi_1(P^t) \psi_A, \psi_A \rangle_{\pi} \\
	%		&= \langle (I - M_1(P^t)) \psi_A, \psi_A \rangle_{\pi} \\
	\end{align*}
\end{proof}

\begin{proof}[Proof of Theorem \ref{thm:Cheeger}]
	We first show the upper bound of \eqref{eq:Cheeger}. We have
	$$\Phi(A) = 1 - Q(A,A) \leq 1 - \dfrac{\langle M_2\1_{A}, \1_{A} \rangle_{\pi}}{\langle \1_{A}, \1_{A} \rangle_{\pi}} = \Phi(A)(M_2) \leq O(k^4) \sqrt{1-\lambda_k^{(2)}}\,,$$
	where we apply the Peskun ordering \eqref{eq:Peskun} in the first inequality and the second inequality comes from the Cheeger's inequality for reversible chain  if $M_2$ is Markov. In the general case however, we can write $M_2 = G + I$, where $G$ is the Markov generator of $M_2$, and apply the corresponding version of Cheeger's inequality for $G$ instead (see e.g. \cite[Theorem $2$]{M15}), so the desired upper bound follows from the min-max characterization of the $k$-way expansion. Next, for the lower bound of \eqref{eq:Cheeger}, we again use the Peskun ordering \eqref{eq:Peskun} to get
	$$\Phi(A) = 1 - Q(A,A) \geq 1 - \dfrac{\langle M_1\1_{A}, \1_{A^c} \rangle_{\pi}}{\langle \1_{A}, \1_{A} \rangle_{\pi}} = \Phi(A)(M_1) \geq \dfrac{1-\lambda_k^{(1)}}{2}\,,$$
	where the second inequality follows from the Cheeger's inequality for reversible chain.
\end{proof}

\subsection{Examples}

In this section, we present an example of asymmetric random walk on $n$-cycle and a numerical example of upward skip-free Markov chain to investigate the sharpness of the spectral bounds presented in Theorem \ref{thm:mainMH} and \ref{thm:mainMH2}.

\begin{example}[Asymmetric random walk on the $n$-cycle]
	Recall that we have studied the asymmetric random walk on the $n$-cycle in Example \ref{ex:srw}, in which we adapt the notations therein. In particular, we have $l := \min\{p,q\}$ and $r := \max\{p,q\}$. For any partition $\mathcal{D} = \{A_1,A_2,\ldots,A_j\}$ with $0 \leq j-1 < n/2$, the upper bound in Theorem \ref{thm:mainMH} now gives
	\begin{align*}
	1 + \sum_{k=1}^{j-1} 1 - 2l (1-\cos(2\pi k/n)) &= j(1-2l) + 2l \sum_{k=0}^{j-1} \cos(2\pi k/n) \\
	&= j(1-2l) + 2l \left( \dfrac{\sin(\pi j/n)\cos(\pi (j-1)/n)}{\sin(\pi/n)} \right)\,.
	\end{align*}
	On the other hand, we have for $j \geq 2$,
	\begin{align*}
	\rho_j = ||\Gamma \phi_j^{(2)}||_{\pi}^2 
	= \sum_{k=1}^{j} \sum_{x \in A_k} \pi(x) \left( \dfrac{\langle \phi_j^{(2)}, \1_{A_k} \rangle_{\pi}}{\pi(A_k)} \right)^2 
	&= \sum_{k=1}^{j} \sum_{x \in A_k} \dfrac{n}{|A_k|^2}  \langle \phi_j^{(2)}, \1_{A_k} \rangle_{\pi}^2 \\
	&= \sum_{k=1}^{j} \dfrac{1}{n|A_k|} \left(\sum_{x \in A_k} \cos(2\pi (j-1)x/n)\right)^2,
	\end{align*}
	and $a = 1-2r + 2r \min_{i \geq 2} \cos(2\pi(i-1)/n)$, so the lower bound of \ref{thm:mainMH} is readily computable.
	
	Next, we now look at the leakage in Theorem \ref{thm:mainMH2} with the partition $\mathcal{D} = \{A,B\}$. The lower bound becomes
	$$	1 - \lambda_2^{(1)}(P^t) = 2l (1-\cos(2\pi/n))\,,$$
	while we note that 
	$$\langle \psi_A, \phi_2^{(2)}(P) \rangle_{\pi} = \dfrac{1}{n}  \left( \sqrt{\dfrac{|B|}{|A|}} \sum_{x \in A} \cos(2\pi x/n) - \sqrt{\dfrac{|A|}{|B|}} \sum_{x \in B} \cos(2\pi x/n)\right)\,,$$
	so the upper bound in Theorem \ref{thm:mainMH2} now reads
	$$1 - \dfrac{1}{n^2}  \left( \sqrt{\dfrac{|B|}{|A|}} \sum_{x \in A} \cos(2\pi x/n) - \sqrt{\dfrac{|A|}{|B|}} \sum_{x \in B} \cos(2\pi x/n)\right)^2 \left( 1 - 2r (1-\cos(2\pi/n)) \right)\,.$$
\end{example}

\begin{example}[Upward skip-free]
	We consider an upward skip-free chain on $\{1,2,3,4\}$ with Markov kernel given by 
	\begin{equation*}\label{eq:example}
	P = \begin{pmatrix}
	0.5 & 0.5 & 0 & 0 \\
	0.2 & 0.6 & 0.2 & 0 \\
	0.1 & 0.3 & 0.5 & 0.1 \\
	0.1 & 0.2 & 0.4 & 0.3 \\
	\end{pmatrix}
	\end{equation*}
	with eigenvalues $(1,0.52,0.25,0.13)$. The two Metropolis kernels are
	\begin{equation*}\label{eq:example2}
	M_1 = \begin{pmatrix}
	0.6 & 0.4 & 0 & 0 \\
	0.2 & 0.66 & 0.14 & 0 \\
	0 & 0.3 & 0.64 & 0.06 \\
	0 & 0 & 0.4 & 0.6 \\
	\end{pmatrix}, \quad 
	M_2 = \begin{pmatrix}
	0.40 & 0.5 & 0.09 & 0.01 \\
	0.25 & 0.54 & 0.2 & 0.01 \\
	0.1 & 0.44 & 0.36 & 0.1 \\
	0.1 & 0.2 & 0.7 & 0 \\
	\end{pmatrix}\,,
	\end{equation*}
	with eigenvalues $\lambda^{(1)} = (1,0.74,0.50,0.28)$ and $\lambda^{(2)} = (1, 0.37, 0.08, -0.16)$ respectively.
	In Theorem \ref{thm:mainMH}, we have an upper bound $1 + \lambda_2^{(1)} = 1.74$ and lower bound $1 + \rho_2 \lambda_2^{(2)} + c = 1 + 0.09\times0.37 + (-0.16)\times (1-0.09) = 0.89$.
	
	First, we consider the partition $\mathcal{D} = \{\{1,2\},\{3,4\}\}$ with $A_1 = \{1,2\}$ and $A_2 = \{3,4\}$. We see that
	$$m(\mathcal{D}) = p(A_1,A_1) + p(A_2,A_2) = 0.87 + 0.61 = 1.48\,,$$
	which is closer to the upper bound $1.74$. If we instead consider the partition $\mathcal{D} = \{\{1,2,3\},\{4\}\}$ with $A_1 = \{1,2,3\}$ and $A_2 = \{4\}$, then 
	$$m(\mathcal{D}) = p(A_1,A_1) + p(A_2,A_2) = 0.98 + 0.3 = 1.28\,,$$
	which is closer to the lower bound of $0.89$.
\end{example}
\noindent \textbf{Acknowledgements}.
The author thanks Pierre Patie, the associate editor and three referees for constructive comments that have improved the quality and presentation of the paper. This work was partially supported by NSF Grant DMS-1406599 and the Chinese University of Hong Kong, Shenzhen grant PF01001143.

\bibliographystyle{abbrvnat}
\bibliography{thesis}

\begin{thebibliography}{33}
\providecommand{\natexlab}[1]{#1}
\providecommand{\url}[1]{\texttt{#1}}
\expandafter\ifx\csname urlstyle\endcsname\relax
  \providecommand{\doi}[1]{doi: #1}\else
  \providecommand{\doi}{doi: \begingroup \urlstyle{rm}\Url}\fi

\bibitem[Aldous and Fill(2002)]{AF14}
D.~Aldous and J.~A. Fill.
\newblock Reversible {M}arkov {C}hains and {R}andom {W}alks on {G}raphs, 2002.
\newblock Unfinished monograph, recompiled 2014, available at
  \url{http://www.stat.berkeley.edu/~aldous/RWG/book.html}.

\bibitem[Bierkens(2016)]{Bie16}
J.~Bierkens.
\newblock Non-reversible {M}etropolis-{H}astings.
\newblock \emph{Stat. Comput.}, 26\penalty0 (6):\penalty0 1213--1228, 2016.

\bibitem[Chen(1996)]{Chen96}
M.~Chen.
\newblock Estimation of spectral gap for {M}arkov chains.
\newblock \emph{Acta Math. Sinica (N.S.)}, 12\penalty0 (4):\penalty0 337--360,
  1996.

\bibitem[Choi and Patie(2019)]{Choi-Patie}
M.~Choi and P.~Patie.
\newblock Skip-free {M}arkov chains.
\newblock \emph{Trans. Amer. Math. Soc.}, 371\penalty0 (10):\penalty0
  7301--7342, 2019.

\bibitem[Davies(1982)]{D82}
E.~B. Davies.
\newblock Metastable states of symmetric {M}arkov semigroups. {II}.
\newblock \emph{J. London Math. Soc. (2)}, 26\penalty0 (3):\penalty0 541--556,
  1982.

\bibitem[Diaconis and Stroock(1991)]{DS91}
P.~Diaconis and D.~Stroock.
\newblock Geometric bounds for eigenvalues of {M}arkov chains.
\newblock \emph{Ann. Appl. Probab.}, 1\penalty0 (1):\penalty0 36--61, 1991.

\bibitem[Diaconis et~al.(2000)Diaconis, Holmes, and Neal]{DHN00}
P.~Diaconis, S.~Holmes, and R.~M. Neal.
\newblock Analysis of a nonreversible {M}arkov chain sampler.
\newblock \emph{Ann. Appl. Probab.}, 10\penalty0 (3):\penalty0 726--752, 2000.

\bibitem[Fill(1991)]{Fill91}
J.~A. Fill.
\newblock Eigenvalue bounds on convergence to stationarity for nonreversible
  {M}arkov chains, with an application to the exclusion process.
\newblock \emph{Ann. Appl. Probab.}, 1\penalty0 (1):\penalty0 62--87, 1991.

\bibitem[Hairer et~al.(2014)Hairer, Stuart, and Vollmer]{HSV14}
M.~Hairer, A.~M. Stuart, and S.~J. Vollmer.
\newblock Spectral gaps for a {M}etropolis-{H}astings algorithm in infinite
  dimensions.
\newblock \emph{Ann. Appl. Probab.}, 24\penalty0 (6):\penalty0 2455--2490,
  2014.

\bibitem[Hastings(1970)]{H70}
W.~K. Hastings.
\newblock Monte carlo sampling methods using {M}arkov chains and their
  applications.
\newblock \emph{Biometrika}, 57\penalty0 (1):\penalty0 97--109, 1970.

\bibitem[Horn and Johnson(2013)]{HJ13}
R.~A. Horn and C.~R. Johnson.
\newblock \emph{Matrix analysis}.
\newblock Cambridge University Press, Cambridge, second edition, 2013.

\bibitem[Huisinga and Schmidt(2006)]{HS06}
W.~Huisinga and B.~Schmidt.
\newblock Metastability and dominant eigenvalues of transfer operators.
\newblock In \emph{New algorithms for macromolecular simulation}, volume~49 of
  \emph{Lect. Notes Comput. Sci. Eng.}, pages 167--182. Springer, Berlin, 2006.

\bibitem[Jerison(2013)]{J13}
D.~Jerison.
\newblock General mixing time bounds for finite {M}arkov chains via the
  absolute spectral gap.
\newblock \emph{arXiv preprint arXiv:1310.8021}, 2013.

\bibitem[Kontoyiannis and Meyn(2012)]{KM12}
I.~Kontoyiannis and S.~P. Meyn.
\newblock Geometric ergodicity and the spectral gap of non-reversible {M}arkov
  chains.
\newblock \emph{Probab. Theory Related Fields}, 154\penalty0 (1-2):\penalty0
  327--339, 2012.

\bibitem[Lee et~al.(2012)Lee, Oveis~Gharan, and Trevisan]{LGT12}
J.~R. Lee, S.~Oveis~Gharan, and L.~Trevisan.
\newblock Multi-way spectral partitioning and higher-order {C}heeger
  inequalities.
\newblock In \emph{S{TOC}'12---{P}roceedings of the 2012 {ACM} {S}ymposium on
  {T}heory of {C}omputing}, pages 1117--1130. ACM, New York, 2012.

\bibitem[Levin et~al.(2009)Levin, Peres, and Wilmer]{LPW09}
D.~A. Levin, Y.~Peres, and E.~L. Wilmer.
\newblock \emph{Markov chains and mixing times}.
\newblock American Mathematical Society, Providence, RI, 2009.

\bibitem[Metropolis et~al.(1953)Metropolis, Rosenbluth, Rosenbluth, Teller, and
  Teller]{M53}
N.~Metropolis, A.~W. Rosenbluth, M.~N. Rosenbluth, A.~H. Teller, and E.~Teller.
\newblock Equation of state calculations by fast computing machines.
\newblock \emph{Journal of Chemical Physics}, 21:\penalty0 1087--1092, 1953.

\bibitem[Meyn and Tweedie(2009)]{MT09}
S.~Meyn and R.~L. Tweedie.
\newblock \emph{Markov chains and stochastic stability}.
\newblock Cambridge University Press, Cambridge, second edition, 2009.
\newblock With a prologue by Peter W. Glynn.

\bibitem[Miclo(2015)]{M15}
L.~Miclo.
\newblock On hyperboundedness and spectrum of {M}arkov operators.
\newblock \emph{Invent. Math.}, 200\penalty0 (1):\penalty0 311--343, 2015.

\bibitem[Miclo(2016)]{Miclo2016}
L.~Miclo.
\newblock On the {M}arkovian similarity.
\newblock Preprint, Mar. 2016.

\bibitem[Montenegro and Tetali(2006)]{MT06}
R.~Montenegro and P.~Tetali.
\newblock Mathematical aspects of mixing times in {M}arkov chains.
\newblock \emph{Found. Trends Theor. Comput. Sci.}, 1\penalty0 (3):\penalty0
  x+121, 2006.

\bibitem[Patie and Savov(2018)]{Patie-Savov}
P.~Patie and M.~Savov.
\newblock
  \href{https://www.researchgate.net/publication/277712416_Spectral_expansions_of_non-self-adjoint_generalized_Laguerre_semigroups}{Spectral
  expansion of non-self-adjoint generalized {Laguerre semigroups}}.
\newblock \emph{Mem. Amer. Math. Soc.}, page 179p., 2018.

\bibitem[Paulin(2015)]{Paulin15}
D.~Paulin.
\newblock Concentration inequalities for {M}arkov chains by {M}arton couplings
  and spectral methods.
\newblock \emph{Electron. J. Probab.}, 20:\penalty0 no. 79, 1--32, 2015.

\bibitem[Peskun(1973)]{Pesk73}
P.~H. Peskun.
\newblock Optimum {M}onte-{C}arlo sampling using {M}arkov chains.
\newblock \emph{Biometrika}, 60:\penalty0 607--612, 1973.

\bibitem[Roberts and Rosenthal(1997)]{RR97}
G.~O. Roberts and J.~S. Rosenthal.
\newblock Geometric ergodicity and hybrid {M}arkov chains.
\newblock \emph{Electron. Comm. Probab.}, 2:\penalty0 no.\ 2, 13--25
  (electronic), 1997.

\bibitem[Roberts and Rosenthal(2004)]{RR04}
G.~O. Roberts and J.~S. Rosenthal.
\newblock General state space {M}arkov chains and {MCMC} algorithms.
\newblock \emph{Probab. Surv.}, 1:\penalty0 20--71, 2004.

\bibitem[Roberts and Tweedie(1996)]{RT96}
G.~O. Roberts and R.~L. Tweedie.
\newblock Geometric convergence and central limit theorems for multidimensional
  {H}astings and {M}etropolis algorithms.
\newblock \emph{Biometrika}, 83\penalty0 (1):\penalty0 95--110, 1996.

\bibitem[Rudolf(2012)]{Rudolf12}
D.~Rudolf.
\newblock Explicit error bounds for {M}arkov chain {M}onte {C}arlo.
\newblock Dissertationes Math. 485 (2012), 93 pp, 2012.

\bibitem[Saloff-Coste(1997)]{LSC97}
L.~Saloff-Coste.
\newblock Lectures on finite {M}arkov chains.
\newblock In \emph{Lectures on probability theory and statistics
  ({S}aint-{F}lour, 1996)}, volume 1665 of \emph{Lecture Notes in Math.}, pages
  301--413. Springer, Berlin, 1997.

\bibitem[Sch{\"u}tte and Sarich(2013)]{SS13}
C.~Sch{\"u}tte and M.~Sarich.
\newblock \emph{Metastability and {M}arkov state models in molecular dynamics},
  volume~24 of \emph{Courant Lecture Notes in Mathematics}.
\newblock Courant Institute of Mathematical Sciences, New York; American
  Mathematical Society, Providence, RI, 2013.
\newblock Modeling, analysis, algorithmic approaches.

\bibitem[Singleton(1984)]{S84}
G.~Singleton.
\newblock Asymptotically exact estimates for metastable {M}arkov semigroups.
\newblock \emph{Quart. J. Math. Oxford Ser. (2)}, 35\penalty0 (139):\penalty0
  321--329, 1984.

\bibitem[Sun et~al.(2010)Sun, Gomez, and Schmidhuber]{SGS10}
Y.~Sun, F.~Gomez, and J.~Schmidhuber.
\newblock Improving the asymptotic performance of {M}arkov chain monte-carlo by
  inserting vortices.
\newblock In \emph{NIPS}, pages 2235--2243, USA, 2010.

\bibitem[Tierney(1998)]{Tie98}
L.~Tierney.
\newblock A note on {M}etropolis-{H}astings kernels for general state spaces.
\newblock \emph{Ann. Appl. Probab.}, 8\penalty0 (1):\penalty0 1--9, 1998.

\end{thebibliography}

\end{document}